\documentclass{amsart}

\usepackage{amsfonts, amsmath, amscd}
\usepackage[psamsfonts]{amssymb}
\usepackage{amssymb}
\usepackage{pb-diagram}
\usepackage[usenames]{color}
\usepackage{hyperref}
\usepackage[all]{xy}
\usepackage{graphicx}

\numberwithin{equation}{section}

\newtheorem{theorem}{Theorem}[section]
\newtheorem{corollary}[theorem]{Corollary}
\newtheorem{proposition}[theorem]{Proposition}
\newtheorem{lemma}[theorem]{Lemma}
\newtheorem{remark}[theorem]{Remark}
\theoremstyle{definition}
\newtheorem{example}[theorem]{Example}
\newtheorem{definition}[theorem]{Definition}
\newtheorem{question}[theorem]{Question}

\theoremstyle{plain}

\newcommand{\R}{\mathbb{R}}

\def\N{\mathbb N}
\def\R{\mathbb R}

\def\U{\mathbb U}
\def\Z{\mathbb Z}

\DeclareMathOperator*{\supp}{\rm supp}
\DeclareMathOperator{\Int}{\rm Int}

\def\cont{\mathfrak c}

\def\hull#1{\langle#1\rangle}

\begin{document}

\title[Density Character of Subgroups of Topological Groups]{Density Character of Subgroups of Topological Groups}
\author{Arkady G. Leiderman, Sidney A. Morris, and Mikhail G. Tkachenko}
\address{Department of Mathematics, Ben-Gurion University of the Negev, Beer Sheva, P.O.B. 653, Israel}
\email{arkady@math.bgu.ac.il}
\address{Faculty of Science, Federation University Australia, P.O.B. 663, Ballarat, Victoria, 3353,  Australia
\newline
Department of Mathematics and Statistics, La~Trobe University, Melbourne, Victoria, 3086, Australia}
\email{morris.sidney@gmail.com}
\address{Departamento de Matem\'aticas, Universidad Aut\'onoma Metropolitana, Av. San Rafael Atlixco 186, Col. Vicentina, Del. Iztapalapa, C.P. 09340, M\'exico, D.F., Mexico}
\email{mich@xanum.uam.mx}
\keywords{Topological group, locally compact group, pro-Lie group, separable topological space}
\subjclass[2010]{Primary 54D65; Secondary 22D05}

\begin{abstract}
It is well-known that a subspace $Y$ of a separable metrizable space $X$ is separable, but without $X$ being assumed metrizable this is not true even in the case that $Y$ is a closed linear subspace of a topological vector space $X$. Early this century K.H.~Hofmann and S.A.~Morris introduced the class of \textit{pro-Lie groups} which  consists of projective limits of finite-dimensional Lie groups and proved that it contains all compact groups, all locally compact abelian groups, and all connected locally compact groups and is closed under the formation of products and closed subgroups. They defined a topological group $G$ to be \textit{almost connected} if the quotient group of $G$ by the connected component of its identity is compact. 

We prove that an almost connected pro-Lie group is separable if and only if its weight is not greater than the cardinality $\cont$ of the continuum. It is deduced from this that an almost connected pro-Lie group is separable if and only if it is homeomorphic to a subspace of a separable Hausdorff space. It is also proved that a locally compact (even feathered) topological group $G$ which is  a subgroup of a separable Hausdorff topological group is separable, but the conclusion is false if it is assumed only that $G$ is homeomorphic to a subspace of a separable Tychonoff space.  It is shown that every precompact (abelian) topological group of weight less than or equal to $\cont$ is topologically isomorphic to a closed subgroup of a separable pseudocompact (abelian) group of weight $\cont$. This result implies that there is a wealth of closed non-separable subgroups of separable pseudocompact groups. An example is also presented under the Continuum Hypothesis of a separable countably compact abelian group which contains a non-separable closed subgroup. It is proved that the following conditions are equivalent for an $\omega$-narrow topological group $G$: (i) $G$ is homeomorphic to a subspace of a separable regular space; (ii) $G$ is a subgroup of a separable topological group;  (iii) $G$  is a closed subgroup of a separable path-connected locally path-connected topological group.
\end{abstract}

\maketitle
%%%%%%%%%%%%%%%%%%%%%%%%%%%%
\section{Introduction}\label{intro}
All topological spaces and topological groups are assumed to be Hausdorff.
The \textit{weight} $w(X)$ of a topological space $X$ is defined as the smallest cardinal number of the form $|\mathcal{B}|$, where $\mathcal{B}$ is a base of topology in $X$. The \textit{density character} $d(X)$ of a topological space $X$ is $\min\{|A|: A\,\,\mbox{is dense in} \,X \}$.
If $d(X) \leq \aleph_0$, then we say that the space $X$ is \textit{separable}.
 
It is well-known that a subspace $S$ of a separable metrizable space $X$ is separable, but a closed subspace $S$ of a separable Hausdorff topological space $X$ is not necessarily separable. Even a closed linear subspace $S$ of a separable Hausdorff topological vector space $X$ can fail to be separable \cite{Lohman}. 

In several classes of topological groups, the situation improves notably.
It has been proved by W. Comfort and G. Itzkowitz \cite{Comfort_Itzkowitz} that a closed subgroup $S$ of a separable locally compact topological group $G$ is separable.
It is known also that a metrizable subgroup of a separable topological group is separable \cite[p.~89]{Vidossich} (see also \cite{Lohman}). 

In Section~\ref{Positive} of this article we look at conditions on the topological group $G$ which are sufficient to guarantee its separability if $G$ is a subspace of a separable Hausdorff or regular space $X$. En route we show in Theorem~\ref{Theorem 10} that an almost connected pro-Lie group (for example, a connected locally compact group or a compact group) is separable if and only if its weight is not greater than $\cont$. One cannot drop \lq{almost connected\rq} in the latter result since by Corollary~\ref{Cor:subg}, there exists a separable \textit{prodiscrete} abelian (hence pro-Lie) group which contains a closed non-separable subgroup. Let us note that a prodiscrete group is a complete group with a base at the identity consisting of open subgroups. Clearly closed subgroups of prodiscrete groups are prodiscrete. It is proved in Theorem~\ref{Th:Main} that if an almost connected pro-Lie group $G$ is a subspace of a separable space, then $G$ is also separable. 

In the rest of Section~\ref{Positive} we consider subgroups of separable topological groups. It is shown in Theorem~\ref{Th:Fea} that every feathered subgroup of a separable group is separable. The class of feathered groups contains both locally compact and metrizable groups, thus we obtain a generalization of aforementioned results of \cite{Comfort_Itzkowitz} and \cite{Vidossich}, \cite{Lohman}. 

In Section~\ref{Sec2} we consider closed isomorphic embeddings into separable groups.
We prove in Theorem~\ref{Th:nA} that a precompact topological group of weight $\leq\cont$ is topologically isomorphic to a closed subgroup of a separable pseudocompact group $H$ of weight $\leq\cont$. Since there is a wealth of non-separable pseudocompact groups of weight $\cont$, we conclude that closed subgroups of separable pseudocompact groups can fail to be separable. The proof of Theorem~\ref{Th:nA} is somewhat technical, so we find it convenient to supply the reader with a considerably easier proof of this theorem for the abelian case (see Theorem~\ref{Th:ab}).

We also present in Proposition~\ref{Pr:CoCo}, under the Continuum Hypothesis, an example of a separable countably compact abelian group $G$ which contains a non-separable closed subgroup. We do not know if such an example exists in $ZFC$ alone (see Question~\ref{Example 3}).

A topological group which has a local base at the identity element consisting of open subgroups is called \textit{protodiscrete}. A complete protodiscrete group is said to be \textit{prodiscrete}. It is shown in Section~\ref{Sec:Pd} that closed subgroups of separable prodiscrete abelian groups can fail to be separable (see Corollary~\ref{Cor:subg}). Since prodiscrete abelian groups are pro-Lie, we see that closed subgroups of separable pro-Lie groups need not be separable either. 

In Section~\ref{Sec:Emb} we compare embeddings of topological groups in separable regular spaces and in separable topological groups. It is proved in Theorem~\ref{Th:Emb} that in the class of $\omega$-narrow topological groups, the difference between the two types of embeddings disappears, even if we require an embedding to be closed. In fact, we show that if an $\omega$-narrow topological group $G$ is \textit{homeomorphic} to a subspace of a separable regular space, then $G$ is topologically isomorphic to a \textit{closed} subgroup of a separable path-connected, locally path-connected topological group.

%%%%%%%%%%%%%%%%%%%%%%%%%%%%%%%%
\section{Background Results}\label{Background}
In this section we collect several important well-known facts that will be frequently 
applied in the sequel. 

\begin{theorem}\label{Th:1}{\rm (De Groot, \cite[Theorem~3.3]{Hodel})}
If $X$ is a separable regular space, then $w(X)\le \cont$. More generally, 
every regular space $X$ satisfies $w(X)\le 2^{d(X)}$. 
\end{theorem}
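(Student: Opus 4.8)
The plan is to establish the more general inequality $w(X)\le 2^{d(X)}$ for an arbitrary regular space $X$; the separable assertion then follows immediately by specializing to $d(X)=\aleph_0$, so that $2^{d(X)}=2^{\aleph_0}=\cont$. Fix a dense set $D\subseteq X$ with $|D|=d(X)=:\kappa$. The strategy is to manufacture a base for $X$ indexed by the subsets of $D$; since there are only $2^\kappa$ such subsets, this will automatically bound $w(X)$ by $2^\kappa$, as required.

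For each $A\subseteq D$ I would set
\[
O(A)=\{x\in X : \text{there is an open } W\ni x \text{ with } W\cap D\subseteq A\}.
\]
Each $O(A)$ is open, because any neighborhood $W$ witnessing $x\in O(A)$ witnesses membership for every one of its own points, so $W\subseteq O(A)$. I then claim that the family $\mathcal{B}=\{O(A):A\subseteq D\}$ is a base for the topology of $X$. The cardinality bound is automatic, $|\mathcal{B}|\le 2^{|D|}=2^\kappa$, so the entire theorem reduces to proving that $\mathcal{B}$ is a base.

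To verify this, I would fix an open set $U$ and a point $x\in U$ and invoke regularity to choose an open $V$ with $x\in V\subseteq\overline{V}\subseteq U$; then I set $A=V\cap D$. Membership $x\in O(A)$ is immediate by taking $W=V$, since $V\cap D=A$. The substantive inclusion is $O(A)\subseteq U$, and I expect this to be the main obstacle, since it is precisely here that both hypotheses are consumed simultaneously. Given $y\in O(A)$ with witnessing neighborhood $W$ (so $W\cap D\subseteq A=V\cap D\subseteq\overline{V}$), suppose toward a contradiction that $y\notin\overline{V}$. Then $W\cap(X\setminus\overline{V})$ is a nonempty open set, so by density of $D$ it contains a point of $D$; but that point lies in $W\cap D$ yet outside $\overline{V}$, contradicting $W\cap D\subseteq\overline{V}$. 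Hence $y\in\overline{V}\subseteq U$, giving $O(A)\subseteq U$ and completing the proof that $\mathcal{B}$ is a base. The delicate point to state carefully is the interplay of the two roles played here: regularity supplies the separating closed neighborhood $\overline{V}$, while density of $D$ is what forces every point of $O(A)$ back into $\overline{V}$.
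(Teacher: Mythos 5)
Your argument is correct. The paper states this result without proof, citing De Groot via Hodel, and your construction is essentially the classical argument from that source: your set $O(A)$ equals $X\setminus\overline{D\setminus A}$, so your family $\mathcal{B}$ is just the complemented form of the standard base $\{\operatorname{Int}_X\overline{A}: A\subseteq D\}$, and each step checks out --- openness of $O(A)$ is immediate, regularity supplies $x\in V\subseteq\overline{V}\subseteq U$, and density of $D$ correctly forces $O(V\cap D)\subseteq\overline{V}$, yielding $w(X)\le 2^{d(X)}$ and, with $d(X)=\aleph_0$, the bound $w(X)\le\cont$.
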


Compact dyadic spaces are defined to be continuous images of generalized Cantor cubes $\{0,1\}^\kappa$, where $\kappa$ is an arbitrary cardinal number. 

\begin{proposition}\label{Pro:Eng}{\rm (Engelking, \cite[Theorem~10]{Eng2})}
Let $\kappa$ be an infinite cardinal. A compact dyadic space $K$ with $w(K)\leq 2^\kappa$ satisfies $d(K)\leq\kappa$. In particular, if $w(K)\leq\cont$, then $K$ is separable.
\end{proposition}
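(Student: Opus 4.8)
The plan is to factor the defining continuous surjection through a Cantor cube whose exponent equals the weight of $K$, and then invoke the Hewitt--Marczewski--Pondiczery theorem. Write $\tau=w(K)$ and fix a continuous surjection $f\colon\{0,1\}^\lambda\to K$ witnessing that $K$ is dyadic; a priori $\lambda$ may be far larger than $\tau$, so the first task is to cut the exponent down to $\tau$.

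First I would reduce the exponent. Since $K$ is compact with $w(K)=\tau$, it embeds into the Tychonoff cube $[0,1]^\tau$; composing $f$ with the $\tau$ coordinate projections yields $\tau$ continuous real-valued functions on $\{0,1\}^\lambda$. The key tool is the classical fact that every continuous real-valued function on a Cantor cube $\{0,1\}^\lambda$ depends on only countably many coordinates (a consequence of the product being ccc). Applying this to each of the $\tau$ functions and taking the union of the countable coordinate sets produces a set $S\sub\lambda$ with $|S|\le\tau\cdot\aleph_0=\tau$ such that $f$ factors as $f=g\circ\pi_S$, where $\pi_S\colon\{0,1\}^\lambda\to\{0,1\}^S$ is the projection and $g\colon\{0,1\}^S\to K$ is continuous and onto. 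Since $\tau$ is infinite and $|S|\le\tau$, the cube $\{0,1\}^\tau$ maps continuously onto $\{0,1\}^S$, and hence $K$ is a continuous image of the Cantor cube of weight exactly $\tau$.

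Next I would bound the density of this cube. By the Hewitt--Marczewski--Pondiczery theorem, a product of at most $2^\kappa$ separable spaces has density $\le\kappa$; as each factor $\{0,1\}$ is separable and $\tau=w(K)\le 2^\kappa$ by hypothesis, this gives $d(\{0,1\}^\tau)\le\kappa$. Because a continuous surjection cannot increase the density character (the image of a dense set is dense), composing with $g$ yields $d(K)\le d(\{0,1\}^\tau)\le\kappa$, which is the assertion. The final clause follows by taking $\kappa=\aleph_0$: then $2^\kappa=\cont$, so $w(K)\le\cont$ forces $d(K)\le\aleph_0$, i.e. $K$ is separable.

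The main obstacle is the coordinate-reduction step: the surjection $f$ furnished by dyadicity may use an exponent $\lambda\gg 2^\kappa$, so without first factoring it through $\{0,1\}^\tau$ one cannot apply the Hewitt--Marczewski--Pondiczery estimate at all. Everything hinges on the dependence-on-countably-many-coordinates lemma for maps out of Cantor cubes, which supplies the factorization; once the exponent is pinned to $\tau$, the remaining density computation is routine.
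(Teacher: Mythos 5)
The paper itself gives no proof of this proposition---it is quoted directly from Engelking \cite[Theorem~10]{Eng2} as a background fact---so there is no internal argument to diverge from; your proof is correct and is essentially the classical argument behind the citation: factor the dyadic surjection through $\{0,1\}^{w(K)}$ and then apply the Hewitt--Marczewski--Pondiczery theorem (the paper's Theorem~\ref{HMP}) together with the observation that continuous surjections do not increase density. One small repair to your justification of the key lemma: the fact that a continuous real-valued function on $\{0,1\}^\lambda$ depends on countably many coordinates is not really ``a consequence of the product being ccc''---ccc alone does not give such dependence (an open set such as $\{x: x_\alpha=0 \mbox{ for some } \alpha\}$ depends on all coordinates). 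The clean argument uses compactness: each coordinate function $\{0,1\}^\lambda\to[0,1]$ is uniformly continuous with respect to the product uniformity, so for each $n$ there is a finite $F_n\subset\lambda$ such that agreement on $F_n$ forces values within $1/n$, and $S=\bigcup_n F_n$ is the required countable set; alternatively one may invoke the Ross--Stone factorization theorem for products of separable spaces. With that justification substituted, your reduction to exponent $\tau=w(K)\leq 2^\kappa$, the estimate $d(\{0,1\}^\tau)\leq\kappa$, and the conclusion $d(K)\leq\kappa$ are all sound, including the degenerate bookkeeping ($|S|\leq\tau\cdot\aleph_0=\tau$, and $h$ is continuous because the projection $\pi_S$ is a quotient map between compact Hausdorff spaces).
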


\begin{theorem}\label{Theorem 3}{\rm (Comfort, \cite[Theorem~3.1]{Comfort})} 
Every infinite compact topological group $G$ satisfies $|G|= 2^{w(G)}$ and $d(G)=\log w(G)$. In particular, if a compact group $G$ satisfies $w(G)\leq\cont$ then it is separable.
\end{theorem}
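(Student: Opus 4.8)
The plan is to prove the two cardinal equalities separately, after recording the one external ingredient that does the heavy lifting for the topological part: the classical theorem that every compact group is a dyadic space (Ivanovskii--Kuz'minov). Granting this, the ``in particular'' clause is immediate from Engelking's Proposition~\ref{Pro:Eng}: if $w(G)\le\cont=2^{\aleph_0}$, then $G$ is a compact dyadic space of weight at most $2^{\aleph_0}$, so the Proposition with $\kappa=\aleph_0$ gives $d(G)\le\aleph_0$, i.e. $G$ is separable. So the real content is the two formulas, and I would treat the density character first since it is the softer of the two.

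For the density character I would prove $d(G)\le\log w(G)$ and $\log w(G)\le d(G)$ separately. The first uses dyadicity: putting $\kappa=\log w(G)$ we have $2^\kappa\ge w(G)$ by definition of $\log$, so Proposition~\ref{Pro:Eng} yields $d(G)\le\kappa=\log w(G)$. The reverse inequality is purely topological and needs nothing about the group: $G$ is regular, so De Groot's Theorem~\ref{Th:1} gives $w(G)\le 2^{d(G)}$, and since $\log w(G)$ is by definition the least cardinal $\lambda$ with $2^\lambda\ge w(G)$, it follows that $\log w(G)\le d(G)$. Combining the two gives $d(G)=\log w(G)$.

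For the cardinality, the upper bound $|G|\le 2^{w(G)}$ requires no group structure at all: fixing a base $\mathcal B$ with $|\mathcal B|=w(G)$, the assignment $x\mapsto\{B\in\mathcal B:x\in B\}$ is injective on any $T_0$ space, whence $|G|\le 2^{|\mathcal B|}=2^{w(G)}$. The reverse inequality $|G|\ge 2^{w(G)}$ is where the group structure becomes essential, and I expect it to be the main obstacle, since a general compact dyadic space (for instance a convergent sequence) can have cardinality far below $2^{w}$. In the abelian case I would invoke Pontryagin duality: $G\cong\widehat A$ for a discrete abelian group $A$ with $|A|=w(G)=\kappa$, and the character group of an infinite discrete abelian group of cardinality $\kappa$ has cardinality $2^\kappa$ (a standard duality fact), giving $|G|=2^\kappa$ directly. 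For the general compact group I would pass to the structural/representation-theoretic picture: by Peter--Weyl the weight $\kappa$ is realized by $\kappa$-many continuous finite-dimensional representations, and writing $G$ as a projective limit of its compact Lie quotients one bounds $|G|$ from below through the nontrivial Lie factors. As a first step, every infinite compact group is perfect (an isolated point would, by homogeneity, force $G$ discrete, hence finite), so $|G|\ge\cont$, which already settles the metrizable case $\kappa=\aleph_0$. The delicate point, and the step I would expect to require the most care, is organizing the inverse system so that the weight $\kappa$ genuinely forces $2^\kappa$ distinct elements rather than merely $\kappa$ or $\cont$ of them.
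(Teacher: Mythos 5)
Your treatment of everything except the cardinality lower bound is sound and matches the paper's intended mechanism: the ``in particular'' clause and the inequality $d(G)\le\log w(G)$ follow from dyadicity (Ivanovskii--Kuz'minov) together with Proposition~\ref{Pro:Eng}, the reverse inequality $\log w(G)\le d(G)$ follows from De Groot's Theorem~\ref{Th:1}, and the bound $|G|\le 2^{w(G)}$ via the injection $x\mapsto\{B\in\mathcal{B}: x\in B\}$ is correct. The genuine gap is exactly where you flag it yourself: the inequality $|G|\ge 2^{w(G)}$ for a general compact group is never established. Pontryagin duality covers only the abelian case; the perfectness argument yields only $|G|\ge\cont$, settling just $w(G)=\aleph_0$; and the projective-limit plan remains an intention, not an argument. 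Indeed, an inverse limit of $\kappa$ many compact Lie groups is merely a closed subgroup of their product, so ``bounding from below through the nontrivial Lie factors'' does not by itself produce $2^\kappa$ distinct points --- this unorganized step is the heart of the theorem, and as written the non-abelian, non-metrizable case of $|G|=2^{w(G)}$ is open.

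The paper closes this gap with a single structural fact, recorded immediately after the statement of Theorem~\ref{Theorem 3}: for an infinite compact group $G$ of weight $\alpha$ there are continuous surjections $\{0,1\}^\alpha\to G\to[0,1]^\alpha$. The right-hand surjection, onto the Tychonoff cube of full weight, is the group-theoretic ingredient you are missing (it is what Comfort's proof supplies); it gives $|G|\ge|[0,1]^\alpha|=2^\alpha$ at once, and in fact the pair of surjections yields both formulas simultaneously, since density does not increase under continuous surjections and $d(\{0,1\}^\alpha)=d([0,1]^\alpha)=\log\alpha$. Alternatively, you could quote the Efimov--Gerlits theorem that every dyadic compactum of weight $\alpha$ contains a topological copy of the Cantor cube $\{0,1\}^\alpha$, which likewise forces $|G|\ge 2^\alpha$. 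Either citation would complete your proposal; without one of them, your sketch proves the density formula and only half of the cardinality formula.
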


Theorem~\ref{Theorem 3} is deduced in \cite{Comfort} from the fact that compact topological groups are dyadic. In fact as Comfort observes, if $G$ is a compact group of weight $\alpha\ge \aleph_0$, then there are continuous surjections as indicated below:
$$ 
\{0,1\}^\alpha \to G \to [0,1]^\alpha
$$
and $|\{0,1\}^\alpha| = | [0,1]^\alpha| =2^\alpha$, and so $|G|= 2^\alpha$.

\medskip
A  key result we shall need is the following one:

\begin{theorem}\label{HMP}{\rm (Hewitt--Marczewski--Pondiczery, \cite[Theorem~11.2]{Hodel})}
Let $\{X_i: i\in I\}$ be a family of topological spaces and $X=\prod_{i\in I}X_i$, where $|I|\le 2^\kappa$ for some cardinal number $\kappa\geq\omega$. If $d(X_i)\le \kappa$ for each $i\in I$, then $d(X)\le \kappa$. In particular, the product of no more than $\cont$ separable spaces is separable.
\end{theorem}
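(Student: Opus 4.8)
The plan is to exhibit an explicit dense subset $D$ of $X=\prod_{i\in I}X_i$ with $|D|\le\kappa$. First I would record the elementary reduction that a set $D\sub X$ is dense if and only if, for every finite $F\sub I$, the projection $\pi_F(D)$ is dense in the finite subproduct $\prod_{i\in F}X_i$. Since a finite product of spaces of density $\le\kappa$ again has density $\le\kappa$, the target in each finite subproduct is harmless; the whole difficulty is that one must hit all of the (up to $2^\kappa$-many) finite subproducts simultaneously using only $\kappa$ points. Note that the naive candidate $\prod_{i\in I}D_i$, where $D_i$ is a dense subset of $X_i$ of size $\le\kappa$, is useless here, as it may have cardinality $\kappa^{|I|}=2^{2^\kappa}$.

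To beat this, I would exploit the hypothesis $|I|\le 2^\kappa$ by fixing an injection of $I$ into $\{0,1\}^\kappa$ and thereafter identifying each index $i\in I$ with a function $i\colon\kappa\to\{0,1\}$. Enumerate each dense set as $D_i=\{d_i^\alpha:\alpha\in\kappa\}$ (with repetitions if $|D_i|<\kappa$). The dense set $D$ is then built from finitely supported ``step functions'': for each finite $E\sub\kappa$ and each map $g\colon\{0,1\}^E\to\kappa$, define a point $x^{(E,g)}\in X$ by $x^{(E,g)}_i=d_i^{\,g(i|_E)}$, where $i|_E\in\{0,1\}^E$ is the restriction. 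Since $|[\kappa]^{<\omega}|=\kappa$ and, for fixed finite $E$, there are $\kappa^{2^{|E|}}=\kappa$ choices of $g$, the resulting set $D=\{x^{(E,g)}\}$ has cardinality at most $\kappa\cdot\kappa=\kappa$.

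Finally I would verify density via the reduction above. Given a nonempty basic open set $\prod_{i\in F}U_i\times\prod_{i\notin F}X_i$ with $F$ finite, choose $\alpha_i\in\kappa$ with $d_i^{\alpha_i}\in U_i$ for each $i\in F$. The crux is a separation lemma: since the elements of $F$ are distinct functions $\kappa\to\{0,1\}$, there is a finite $E\sub\kappa$ such that $i\mapsto i|_E$ is injective on $F$ (for each pair pick a coordinate on which they differ and collect these finitely many coordinates). One may then define $g$ on $\{0,1\}^E$ so that $g(i|_E)=\alpha_i$ for every $i\in F$, which forces $x^{(E,g)}_i=d_i^{\alpha_i}\in U_i$ for all $i\in F$, so $x^{(E,g)}$ lies in the given open set. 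I expect this separation step to be the main obstacle, since it is precisely where the cardinality bound on $I$ is converted into the ability to realize an arbitrary finite pattern with a single point of $D$. The final clause then follows by taking $\kappa=\omega$, as $\cont=2^{\omega}$ and density $\le\omega$ means separability.
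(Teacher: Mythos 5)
Your argument is correct and is essentially the classical proof of the Hewitt--Marczewski--Pondiczery theorem: the paper states this result without proof, citing Hodel, and the standard argument given there is the same construction you describe\,\,---\,\,identify $I$ with a subset of $\{0,1\}^\kappa$, separate any finite $F\subseteq I$ by restricting to a suitable finite set $E\subseteq\kappa$ of coordinates, and take the $\kappa$-many points indexed by the pairs $(E,g)$ with $g\colon\{0,1\}^E\to\kappa$. It is worth noting that the paper itself adapts precisely this construction in the proof of Theorem~\ref{Th:nA}, where the index set $2^\kappa$ is replaced by $\R$ and your separating clopen cylinders are replaced by pairwise disjoint intervals with rational endpoints.
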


In the following two definitions we introduce the main concepts of our study in Section~\ref{Positive}.

\begin{definition}\label{almost connected}{\rm (Hofmann--Morris, \cite{PROBOOK})}
{\rm A Hausdorff topological group $G$ is said to be \emph{almost connected} if the quotient group $G/G_0$ is compact, where $G_0$ is the connected component of the identity in $G$.}  
\end{definition}

Clearly the class of almost connected topological groups includes all compact groups, all connected topological groups, and their products.

\begin{definition}\label{pro-Lie group}{\rm (Hofmann--Morris, \cite{PROBOOK})}
{\rm A topological group is called a \emph{pro-Lie} group if it is a projective limit of finite-dimensional Lie groups.}  
\end{definition}

As shown in \cite{PROBOOK} the class of  pro-Lie groups includes all locally compact abelian topological groups, all compact groups, all connected locally compact topological groups, and all almost connected locally compact topological groups. Further, every closed subgroup of a pro-Lie group is again a pro-Lie group and any finite or infinite product of pro-Lie groups is a pro-Lie group.

\begin{theorem}\label{Theorem 7}{\rm (Hofmann--Morris, \cite[Theorem~12.81]{PROBOOK})}
Let $G$ be a connected pro-Lie group. Then $G$ contains a maximal compact connected subgroup $C$ such that $G$ is homeomorphic to $C\times {\R}^\kappa$, for some cardinal $\kappa$. 
\end{theorem}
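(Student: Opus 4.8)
The plan is to reduce to the classical structure theory of finite-dimensional Lie groups and then pass to the projective limit, organizing the bookkeeping through the Lie algebra. First I would form the Lie algebra $\mathfrak{g}=L(G)$; since $G$ is a projective limit of finite-dimensional Lie groups, $\mathfrak{g}$ is a pro-Lie algebra, that is, a projective limit of finite-dimensional Lie algebras, and as a topological vector space it is weakly complete, hence linearly homeomorphic to $\R^J$ for some set $J$. The two objects to produce are (a) a maximal compact connected subgroup $C$, and (b) a closed vector subspace $\mathfrak{p}\subseteq\mathfrak{g}$ complementary to $L(C)$, such that the map $\Phi\colon C\times\mathfrak{p}\to G$ given by $\Phi(c,X)=c\,\exp X$ is a homeomorphism; identifying $\mathfrak{p}\cong\R^\kappa$ then gives the statement.

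Next I would write $G=\varprojlim_{N} G/N$, where $N$ runs over a filter basis of closed normal subgroups of $G$ with each quotient $G/N$ a finite-dimensional Lie group; connectedness of $G$ forces each $G/N$ to be connected. By the Cartan--Iwasawa--Malcev theorem each $G/N$ has maximal compact subgroups, pairwise conjugate, and $G/N$ is homeomorphic to $K_N\times\R^{n_N}$ via $(k,v)\mapsto k\exp_N(v)$, where $v$ ranges over a linear complement of $L(K_N)$ in $L(G/N)$ and $\exp_N$ restricts to a diffeomorphism of that complement onto a closed subset. The crucial step is to choose these data \emph{coherently} along the system: using the conjugacy of maximal compact subgroups together with a cofinal refinement of the index set, one arranges that the bonding homomorphisms carry the chosen compact subgroups onto one another, so that $C:=\varprojlim_N K_N$ is a compact connected subgroup of $G$, and that the linear complements are compatible, so that they assemble to a closed subspace $\mathfrak{p}$ of $\mathfrak{g}$ with $\mathfrak{p}\cong\R^\kappa$.

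With $C$ and $\mathfrak{p}$ in hand I would verify that $\Phi$ is a homeomorphism. Continuity is immediate; injectivity and surjectivity are checked by projecting to each $G/N$ and invoking the finite-dimensional decomposition there together with its compatibility across the system; continuity of the inverse follows from the projective-limit topology once surjectivity of each $\Phi_N$ and the closed-embedding property of $\exp_N$ on the complement are known. Finally $C$ is maximal compact connected, since a strictly larger compact connected subgroup would project to a compact connected subgroup strictly containing some $K_N$, contradicting maximality in $G/N$.

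The main obstacle is the coherence step of the second paragraph. The Cartan--Iwasawa--Malcev decomposition is canonical only up to conjugacy, the bonding maps need not respect arbitrary choices of maximal compact subgroup or of linear complement, and the Euclidean dimensions $n_N$ typically grow without bound. Forcing the choices to be compatible---so that the compact pieces form an inverse system with surjective, compact-to-compact bonding maps and the vector pieces form a genuine projective system of vector spaces---requires the global pro-Lie machinery: the existence and conjugacy of maximal compact subgroups of a connected pro-Lie group, and the completeness of $G$ to guarantee both that $\varprojlim_N K_N$ is large enough to be maximal and that $\Phi$ is onto. This is exactly where the bare fact that each quotient splits fails to pass to the limit for free, and where the hard Lie-theoretic content of the theorem resides.
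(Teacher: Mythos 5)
The paper does not prove this statement at all: it is quoted as a background theorem from Hofmann--Morris \cite[Theorem~12.81]{PROBOOK}, so your attempt has to be measured against the proof in that book, which is the capstone of several hundred pages of structure theory. Measured that way, your proposal has a decisive gap already at the finite-dimensional base case. The Cartan--Iwasawa--Malcev theorem does \emph{not} say that a connected Lie group $G/N$ is homeomorphic to $K_N\times\R^{n_N}$ via $(k,v)\mapsto k\exp_N(v)$ for a linear complement of $L(K_N)$; it asserts only the existence of a homeomorphism, realized by an iterated product $k\exp(t_1X_1)\cdots\exp(t_mX_m)$ of finitely many one-parameter subgroups. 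Your formula fails concretely for the universal cover of $SL(2,\R)$: there the maximal compact subgroup is trivial, so your map would force $\exp\colon\mathfrak{g}\to G$ to be a homeomorphism, but $\exp$ is not even surjective for this group. Consequently the global map $\Phi(c,X)=c\exp X$ that your whole plan is built around cannot exist in general, and the object to be constructed in the limit is necessarily more elaborate than a single closed linear complement $\mathfrak{p}$.

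The second gap is the coherence step, which you correctly flag as the main obstacle but then assume away by appeal to ``conjugacy plus a cofinal refinement.'' This is not bookkeeping: bonding maps of the canonical projective system need not carry maximal compact subgroups onto maximal compact subgroups, and conjugation cannot repair this. Already for $G=\R^{\N}$ the canonical system contains bonding maps of the form $\R^F\times\R\to\R^F\times\mathbb{T}$, where the image of the (trivial) maximal compact subgroup upstairs is strictly smaller than the maximal compact subgroup $\mathbb{T}$ downstairs, and conjugacy is vacuous since everything is abelian; likewise the linear complements are non-canonical, not subalgebras, and not respected by the bonding maps, so they do not form a projective system under any choice. Overcoming exactly these failures is the content of the theorem. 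The Hofmann--Morris proof does not glue level-wise decompositions at all: it works globally, through the Levi--Malcev splitting of the pro-Lie algebra, the theorem that simply connected pro-solvable groups are homeomorphic to $\R^J$, vector subgroup splitting theorems, and the existence and conjugacy theory of maximal compact connected subgroups of connected pro-Lie groups. So while your sketch correctly identifies where the difficulty lives, it does not contain a proof of it, and as written the argument would fail even if the coherence were granted, because of the incorrect base-case formula.
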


Almost connected pro-Lie groups also admit a topological characterization similar to the one in Theorem~\ref{Theorem 7}.

\begin{theorem}\label{Th_HM}{\rm (Hofmann--Morris, \cite[Corollary~8.9]{HM2})}
Every almost connected pro-Lie group $G$ is homeomorphic to the product $\R^\kappa \times \{0,1\}^\lambda \times B$, where $\{0,1\}$ is the discrete two-element group, $B$ is a compact connected group, and $\kappa,\lambda$ are cardinals. 
\end{theorem}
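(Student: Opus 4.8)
The plan is to reduce the assertion to the already-available structure theorem for the \emph{connected} case (Theorem~\ref{Theorem 7}) and to treat the ``almost'' part by splitting off the totally disconnected compact quotient. Write $G_0$ for the connected component of the identity. Since $G_0$ is a closed subgroup of the pro-Lie group $G$, it is itself a connected pro-Lie group, so Theorem~\ref{Theorem 7} furnishes a homeomorphism $G_0 \cong C \times \R^\kappa$, with $C$ a compact connected group and $\kappa$ a cardinal. On the other side, $Q := G/G_0$ is compact, because $G$ is almost connected, and it is totally disconnected, being the quotient of a topological group by its identity component; hence $Q$ is a profinite group. Thus the two ``ends'' of $G$ are already understood, and the whole problem is to glue them together correctly.

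The decisive reduction is the claim that $G$ is homeomorphic to the product $G_0 \times Q$. For this I would produce a continuous cross-section $s\colon Q \to G$ of the quotient homomorphism $q\colon G \to Q$, i.e.\ a continuous map with $q \circ s = \mathrm{id}_Q$. Once such an $s$ is at hand, the map $\varphi\colon G_0 \times Q \to G$ given by $\varphi(g,x) = g\cdot s(x)$ is a homeomorphism: it is a continuous bijection whose inverse $h \mapsto \bigl(h\cdot s(q(h))^{-1},\, q(h)\bigr)$ is manifestly continuous. Combining this splitting with the decomposition of $G_0$ yields $G \cong \R^\kappa \times C \times Q$.

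It then remains to recognize the two compact factors. The factor $C$ is already a compact connected group, so we may take $B = C$. For the factor $Q$ I would invoke the known fact that an infinite profinite group of weight $\lambda$ is homeomorphic to the Cantor cube $\{0,1\}^\lambda$ (and when $Q$ is finite the corresponding $\lambda$ is finite, or the factor is absorbed trivially). Reordering the factors then produces the asserted homeomorphism $G \cong \R^\kappa \times \{0,1\}^\lambda \times B$.

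The main obstacle is the existence of the continuous section $s\colon Q \to G$. Its fibres are precisely the cosets of $G_0$, equivalently the connected components of $G$, so constructing $s$ amounts to selecting one point from each component in a continuous and coherent fashion. Here I would exploit that $Q$ is compact and \emph{zero-dimensional}: writing $Q = \varprojlim_i Q_i$ as an inverse limit of finite groups $Q_i = Q/N_i$ and pulling the open normal subgroups $N_i$ back to open subgroups of $G$ containing $G_0$, one builds $s$ by a coherent choice of coset representatives over the finite discrete quotients and passes to the limit, the zero-dimensionality of the base ensuring that these choices can be made continuously (equivalently, that the ``bundle'' $G \to G/G_0$ over the profinite base is trivial). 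This is the technical heart of the theorem; the remaining steps are either immediate or quotations of standard facts about compact and profinite groups.
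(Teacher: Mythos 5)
The paper offers no proof of Theorem~\ref{Th_HM}: it is quoted verbatim from Hofmann--Morris \cite{HM2}, where it is deduced from the main structure theorem of that paper (every almost connected pro-Lie group $G$ has a maximal compact subgroup $K$ and is homeomorphic to $\R^{I}\times K$), combined with the topological splitting theorem for compact groups ($K\approx K_0\times K/K_0$, which rests on Lee's supplement theorem producing a profinite subgroup $D$ with $K=K_0D$) and the classical fact that an infinite profinite group of weight $\lambda$ is homeomorphic to $\{0,1\}^\lambda$. Your outer reductions are consistent with this: the use of Theorem~\ref{Theorem 7} for $G_0$, the identification of $Q=G/G_0$ as profinite (though you should justify total disconnectedness of $Q$ via its compactness --- the preimage of the identity component of $Q$ is connected, hence lies in $G_0$ --- since for general topological groups $G/G_0$ need not be totally disconnected), and the Cantor-cube identification are all sound quotations.

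The genuine gap is the continuous cross-section $s\colon Q\to G$, which is exactly where the theorem's depth lies, and your proposed construction of it does not work. Pulling back the open normal subgroups $N_i$ of $Q$ gives open subgroups $H_i=q^{-1}(N_i)\supseteq G_0$ with $\bigcap_i H_i=G_0\neq\{e\}$; a coherent choice of coset representatives therefore produces, for each $x\in Q$, a decreasing family of clopen cosets $s_i(x)H_i$ whose intersection is the \emph{entire fibre} $q^{-1}(x)$, a full coset of $G_0$, not a point. The approximants $s_i$ do not converge, and selecting a point of each fibre afterwards is precisely the original selection problem, untouched: for $y$ close to $x$ one only knows $s(y)\in s_i(x)H_i$, which constrains $s(y)$ to be near the fibre of $x$, not near $s(x)$, so continuity is not ensured by zero-dimensionality of the base. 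Nor does standard selection machinery rescue this: Michael's zero-dimensional selection theorem requires completely metrizable fibres, whereas here the fibres are copies of $G_0\approx \R^\kappa\times C$ of arbitrarily large weight. Triviality of the ``bundle'' $G\to G/G_0$ is a substantive theorem even when $G$ is compact, and in the pro-Lie case \cite{HM2} obtains it only through the maximal compact subgroup theorem --- the heavy machinery your sketch bypasses. A secondary blemish: your parenthetical that a finite $Q$ is ``absorbed trivially'' fails when $|Q|$ is finite and not a power of $2$ (e.g.\ $G=\R\times\Z(3)$), since $\R^\kappa\times\{0,1\}^\lambda\times B$ with $B$ connected has $2^\lambda$ components; this edge case is arguably glossed by the quoted statement itself, but your proof cannot simply wave it away.
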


\begin{remark}\label{Remark 8} 
{\rm It is known that a topological group $G$ is separable if it contains a closed subgroup $H$ such that both $H$ and the quotient space $G/H$ are separable. In particular, separability is a three space property \cite[Theorem~1.5.23]{AT}.}
\end{remark}

A topological group is said to be \textit{$\omega$-narrow} \cite[Section~3.4]{AT} if it can be covered by countably many translations of every neighborhood of the identity element. It is known that every separable topological group is $\omega$-narrow (see \cite[Corollary~3.4.8]{AT}). The class of $\omega$-narrow groups is productive and hereditary with respect to taking arbitrary subgroups \cite[Section~3.4]{AT}, so $\omega$-narrow groups need not be separable. In fact, $\omega$-narrow groups can have uncountable cellularity (see \cite{Usp} or \cite[Example~5.4.13]{AT}). 

The following theorem characterizes the class of $\omega$-narrow topological groups.

\begin{theorem}\label{Th:Gur}{\rm (Guran, \cite{Gur})}
A topological group $G$ is $\omega$-narrow if and only if $G$ is topologically isomorphic to a subgroup of a product of second countable topological groups.
\end{theorem}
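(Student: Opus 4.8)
The plan is to prove the two implications separately; the substantive direction is that an $\omega$-narrow group embeds into a product of second countable groups.

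The converse direction is immediate from facts already recorded. Every second countable space is separable, so a second countable topological group is separable and hence $\omega$-narrow by \cite[Corollary~3.4.8]{AT}. Since the class of $\omega$-narrow groups is productive and closed under passing to arbitrary subgroups \cite[Section~3.4]{AT}, any subgroup of a product $\prod_{i\in I}G_i$ of second countable groups $G_i$ is again $\omega$-narrow. No new ideas are needed here.

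For the forward direction the strategy is a diagonal embedding driven by a single reduction lemma: \emph{for every open neighbourhood $U$ of the identity $e$ there are a continuous homomorphism $\pi_U\colon G\to K_U$ onto a second countable group $K_U$ and an open neighbourhood $V_U$ of the identity in $K_U$ with $\pi_U^{-1}(V_U)\subseteq U$.} Granting the lemma, let $\Delta\colon G\to\prod_U K_U$ be the diagonal of the family $\{\pi_U\}$, the product taken over a neighbourhood base at $e$. Then $\Delta$ is a continuous homomorphism; its kernel is trivial because, for $g\neq e$, a Hausdorff choice of $U$ with $g\notin U$ forces $g\notin\pi_U^{-1}(V_U)$ and hence $\pi_U(g)\notin V_U$. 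Moreover the inclusions $\pi_U^{-1}(V_U)\subseteq U$, ranging over all $U$, show that $\Delta$ recovers the topology at $e$ and so (by homogeneity) is a homeomorphism onto its image. Thus $\Delta$ is a topological isomorphism of $G$ onto a subgroup of a product of second countable groups.

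To establish the lemma I would fix $U=U_0$ and build recursively, using continuity of the group operations, a sequence $\{U_n\}_{n\ge 0}$ of open symmetric neighbourhoods of $e$ with $U_{n+1}U_{n+1}U_{n+1}\subseteq U_n$. The point at which $\omega$-narrowness enters is the following: for each $n$ pick a countable set $B_n$ with $G=B_nU_n$, set $D=\bigcup_n B_n$ (a countable set, closed under inverses), fix an enumeration $D=\{d_i:i\in\omega\}$, and while shrinking $U_{n+1}$ also impose the finitely many open conditions $d_iU_{n+1}d_i^{-1}\subseteq U_n$ for $i\le n$. Put $N=\bigcap_n U_n$. The plan is then to verify: that $N$ is a closed \emph{normal} subgroup, so that $K_U:=G/N$ is a Hausdorff topological group with the quotient map $\pi$ open; that the sets $\pi(U_n)$ form a countable base at the identity of $K_U$, whence $K_U$ is first countable and so metrizable by the Birkhoff--Kakutani theorem; and that $K_U$ is separable, since $\pi(D)$ is a countable dense set (each $\pi(B_n)$ meets every translate of $\pi(U_n)$ because $G=B_nU_n$). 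A separable metrizable group is second countable, and $\pi^{-1}(\pi(U_1))=U_1N\subseteq U_1U_1\subseteq U_0=U$, so $V_U=\pi(U_1)$ completes the proof of the lemma.

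I expect the main obstacle to be precisely the verification that $N=\bigcap_n U_n$ is \emph{normal}, rather than merely a subgroup. The chain condition $U_{n+1}U_{n+1}U_{n+1}\subseteq U_n$ by itself yields only a left-invariant continuous pseudometric and hence a metrizable coset \emph{space} $G/N$, not a quotient \emph{group}, because a general topological group admits no base of conjugation-invariant neighbourhoods; conjugation by the ``$U_n$-part'' of an arbitrary element escapes control. The device meant to repair this is the countable set $D$ furnished by $\omega$-narrowness together with the covers $G=B_nU_n$: conjugation invariance of $N$ under $D$ must be promoted, via these covers, to invariance under all of $G$, and the same covers simultaneously force the metrizable quotient to be separable. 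Getting this bookkeeping exactly right is the delicate heart of the argument, and it is the sole place where the hypothesis of $\omega$-narrowness is indispensable.
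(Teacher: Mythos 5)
The paper itself offers no proof of Theorem~\ref{Th:Gur}: it is quoted as a background result of Guran \cite{Gur}, whose standard proof appears in \cite[Theorem~3.4.23]{AT}. Your architecture coincides with that standard proof: the easy direction via productivity and heredity of $\omega$-narrowness, and the substantive direction via a diagonal product over a reduction lemma which is precisely \cite[Corollary~3.4.19]{AT}\,\,---\,\,the very statement this paper invokes and diagonalizes in its proof of Theorem~\ref{Th:Main_B}. So the route is the right one; the question is whether your sketch of the reduction lemma closes, and it does not quite.

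The gap is exactly where you flag it, and you leave it open rather than resolve it. Your imposed conditions do give, for each fixed $d_i$, that $d_iU_{n+1}d_i^{-1}\subseteq U_n$ for all $n\geq i$, hence $d_iNd_i^{-1}\subseteq\bigcap_{n\geq i}U_n=N$; so $N$ is invariant under $D$. But the promotion to all of $G$ fails if you decompose $g=bu$ with $b\in B_n$ and $u\in U_n$, as the covers $G=B_nU_n$ suggest: then $gxg^{-1}=b(uxu^{-1})b^{-1}$, and the \emph{inner} conjugation by $u$ is uncontrolled\,\,---\,\,for $x\in N$ one only gets $uxu^{-1}\in U_nU_{n+1}U_n\subseteq U_{n-1}$, and since the index of $b$ in the enumeration of $D$ is unbounded as $n$ grows, the outer conjugation by $b$ need not map $U_{n-1}$ anywhere controlled. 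The repair is to reverse the order: since the $U_n$ are symmetric and $D$ is closed under inverses, also $G=U_nD_n$ for suitable countable $D_n\subseteq D$, so write $g=ub$ with $u\in U_{n+1}$ and $b\in D$. Given $n$, choose $m$ larger than both $n+1$ and the threshold $i$ of $b$; then $gU_mg^{-1}=u\bigl(bU_mb^{-1}\bigr)u^{-1}\subseteq U_{n+1}U_{n+1}U_{n+1}\subseteq U_n$. Thus for every $g\in G$ and every $n$ there is $m$ with $gU_mg^{-1}\subseteq U_n$, which yields $gNg^{-1}\subseteq N$ at once. Without this (or an equivalent device, such as first proving $G$ is $\omega$-balanced and closing a countable family of neighborhoods under the balancedness operator, as in \cite[Section~3.4]{AT}), the lemma is unproved, and the lemma is the whole theorem.

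A second, independent misstep: your claim that the sets $\pi(U_n)$ form a countable base at the identity of $K_U=G/N$ \emph{with the quotient topology} is unjustified and in general false\,\,---\,\,the quotient topology has base $\{\pi(W): W \mbox{ a neighborhood of } e\}$, and nothing forces some $U_nN$ inside $WN$ for every $W$. The standard fix is to equip the abstract group $G/N$ with the possibly coarser group topology generated by $\{\pi(U_n)\}$ as a base at the identity: the property displayed above (for all $g,n$ there is $m$ with $gU_mg^{-1}\subseteq U_n$) is exactly the axiom making this a group topology, it is Hausdorff because $U_{n+1}N\subseteq U_{n+1}U_{n+1}\subseteq U_n$ gives $\bigcap_n U_nN=N$, and the reduction lemma needs only continuity of $\pi$, which holds since $\pi^{-1}(\pi(U_n))=U_nN\supseteq U_n$. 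With that change\,\,---\,\,and after fixing the circular enumeration of $D$ (your $B_n$ depends on $U_n$, chosen during the recursion; interleave by enumerating each $B_k$ and imposing at stage $n$ the conditions for the finitely many $b_{k,j}$ with $k,j\leq n$)\,\,---\,\,your density, metrizability, and second-countability claims go through and the proof closes.
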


%%%%%%%%%%%%%%%%%%%%%%%%%%%%%%%%%
\section{Separability of pro-Lie groups}\label{Positive}
We prove in this section that an almost connected pro-Lie group $G$ is separable 
if and only if $w(G)\leq \cont$. This fact is then used to show that if an almost connected pro-Lie group $G$ is a subspace of a separable space, then $G$ is separable as well. We also prove in Corollary~\ref{Theorem 14} that a locally compact subgroup of a separable topological group is separable. 

\begin{theorem}\label{Theorem 10} 
An almost connected pro-Lie group $G$ is separable if and only if $w(G) \le \cont$. In particular this is the case if $G$ is a connected locally compact group. 
\end{theorem}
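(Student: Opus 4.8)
The plan is to reduce the statement to the topological characterization of almost connected pro-Lie groups supplied by Theorem~\ref{Th_HM}, together with the cardinal arithmetic of density and weight recorded in Section~\ref{Background}. By Theorem~\ref{Th_HM}, $G$ is homeomorphic to a product $\R^\kappa \times \{0,1\}^\lambda \times B$, where $B$ is a compact connected group and $\kappa,\lambda$ are cardinals. Since separability and weight are topological invariants, it suffices to determine when this product is separable and to relate that to its weight.

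\medskip
The forward implication is immediate from De Groot's theorem: if $G$ is separable then, being regular, it satisfies $w(G)\le 2^{d(G)}\le 2^{\aleph_0}=\cont$ by Theorem~\ref{Th:1}. For the converse, assume $w(G)\le\cont$. First I would compute the weight of the product factor by factor. The weight of a (nontrivial) product is the supremum of $\aleph_0\cdot|J|$ over the index sets $J$ of infinite-weight factors, so $w(G)\le\cont$ forces $\kappa\le\cont$ and $\lambda\le\cont$, and also $w(B)\le\cont$. Now I treat the three factors separately. The factor $\R^\kappa$ with $\kappa\le\cont$ is separable by the Hewitt--Marczewski--Pondiczery theorem (Theorem~\ref{HMP}), since it is a product of at most $\cont$ copies of the separable space $\R$. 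The factor $\{0,1\}^\lambda$ with $\lambda\le\cont$ is separable for the same reason, being a product of at most $\cont$ copies of the two-point space. Finally, $B$ is a compact connected group with $w(B)\le\cont$, hence separable by Comfort's theorem (Theorem~\ref{Theorem 3}), or alternatively by Engelking's theorem on dyadic spaces (Proposition~\ref{Pro:Eng}).

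\medskip
It remains to combine the three factors. Applying Theorem~\ref{HMP} once more to the product of the three separable spaces $\R^\kappa$, $\{0,1\}^\lambda$, and $B$ (a product of finitely many, hence of at most $\cont$, separable spaces) shows that $G$ is separable. For the final sentence, a connected locally compact group is in particular an almost connected pro-Lie group (as noted after Definition~\ref{pro-Lie group} and in the remarks following Definition~\ref{almost connected}); a connected locally compact group has countable weight augmented by its $\R^\kappa$-part, but in any case, being a subspace argument aside, one checks directly that its weight does not exceed $\cont$, so separability follows from the main equivalence.

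\medskip
I do not expect a genuine obstacle here: the entire argument is an assembly of the quoted background results once the homeomorphism of Theorem~\ref{Th_HM} is invoked. The one point requiring slight care is the weight computation for the product---specifically, confirming that $w(G)\le\cont$ indeed bounds each of $\kappa$, $\lambda$, and $w(B)$ by $\cont$ rather than merely bounding some aggregate---but this is routine, since the weight of a product of spaces each with at least two points dominates the number of nondegenerate factors. A secondary point is ensuring that passing through the homeomorphism is legitimate: separability and weight are purely topological, so the algebraic structure of $G$ is used only through Theorem~\ref{Th_HM} itself.
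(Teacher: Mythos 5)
Your proof of the main equivalence is correct and is essentially the paper's own argument: De Groot's theorem (Theorem~\ref{Th:1}) for necessity, and for sufficiency the Hofmann--Morris decomposition $G\cong\R^\kappa\times\{0,1\}^\lambda\times B$ from Theorem~\ref{Th_HM}, with the weight of the product bounding $\kappa$, $\lambda$, $w(B)$ by $\cont$, then Hewitt--Marczewski--Pondiczery for the first two factors and Comfort's theorem for $B$. One correction to your closing remark, though: it is \emph{not} true that a connected locally compact group must have weight at most $\cont$ (the compact connected group $\mathbb{T}^{\cont^+}$ is a counterexample), so you should delete the clause ``one checks directly that its weight does not exceed $\cont$''; the ``in particular'' in the statement only asserts that the \emph{equivalence} applies to connected locally compact groups, which follows solely from the fact---correctly noted in the first half of your sentence---that such groups are almost connected pro-Lie groups.
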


\begin{proof}
If $G$ is separable, then $w(G)\leq\cont$ by Theorem~\ref{Th:1}, since Hausdorff topological groups are regular. 
 
Conversely, assume that $w(G)\leq\cont$. By Theorem~\ref{Th_HM}, the group $G$ is homeomorphic to the product $\R^\kappa\times\{0,1\}^\lambda\times B$, where $B$ is a compact connected topological group and $\kappa,\lambda$ are cardinals. It is clear that $w(G)=\kappa\cdot\lambda\cdot\mu\leq\cont$, where $\mu=w(B)$. Hence the spaces $\R^\kappa$ and $\{0,1\}^\lambda$ are separable by Theorem~\ref{HMP}, while the separability of $B$ follows from Theorem~\ref{Theorem 3}.  Hence $G$ is also separable as the product of three separable spaces. 
\end{proof}

Since every connected locally compact group is $\sigma$-compact, the last part of Theorem~\ref{Theorem 10} admits the following slightly more general form which will be applied in the proof of Theorem~\ref{Th:Main_B}:

\begin{lemma}\label{Le:PonGen}
Let $N$ be a closed subgroup of an $\omega$-narrow topological group $G$. If the quotient space $G/N$ is locally compact, then the inequality $w(G/N)\leq\cont$ is equivalent to the separability of $G/N$.
\end{lemma}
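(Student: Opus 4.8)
The plan is to prove the two implications separately, the forward one being routine and the converse carrying all the weight. For the forward implication, recall that the coset space $G/N$ of a topological group by a closed subgroup is Tychonoff, hence regular; so if $G/N$ is separable, then $w(G/N)\le\cont$ by De~Groot's theorem (Theorem~\ref{Th:1}). Thus I would spend the rest of the proof on the converse: assuming $w(G/N)\le\cont$ and $G/N$ locally compact, deduce $d(G/N)\le\aleph_0$.

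First I would show that $G/N$ is $\sigma$-compact. Let $\pi\colon G\to G/N$ be the canonical map; it is continuous, open, and equivariant for the left translation action of $G$ on $G/N$, and each translation $x\mapsto g\cdot x$ is a homeomorphism of $G/N$. Since $G/N$ is locally compact, the base point $O=\pi(e)$ has an open neighbourhood $V$ with $\ovl V$ compact. Then $W=\pi^{-1}(V)$ is an open neighbourhood of the identity in $G$, so $\omega$-narrowness gives a countable set $\{g_n:n\in\omega\}$ with $G=\bigcup_n g_nW$. Applying the open equivariant surjection $\pi$ yields $G/N=\bigcup_n g_n\cdot V\sub\bigcup_n g_n\cdot\ovl V$, a countable union of compact sets. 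Hence $G/N$ is $\sigma$-compact, and since each $g_n\cdot\ovl V$ is a homeomorphic copy of $\ovl V$, it suffices to prove that the single compact set $K=\ovl V$ is separable.

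The compact set $K$ satisfies $w(K)\le w(G/N)\le\cont$, but this alone is not enough: a compact space of weight $\le\cont$ can fail to be separable (for instance the one-point compactification of a discrete space of cardinality $\cont$). So the argument must exploit that $K$ lives inside the coset space $G/N$. The cleanest situation is when $N$ is normal: then $G/N$ is a locally compact, $\sigma$-compact topological group, and by the Kakutani--Kodaira theorem it has a compact normal subgroup $K_0$ with $(G/N)/K_0$ second countable, hence separable; since $w(K_0)\le\cont$, Comfort's Theorem~\ref{Theorem 3} makes $K_0$ separable, and separability of $G/N$ follows from the three space property (Remark~\ref{Remark 8}). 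For the general (non-normal) case I would instead show that $K$ is a \emph{dyadic} compactum and conclude $d(K)\le\aleph_0$ directly from Engelking's Proposition~\ref{Pro:Eng}; a countable union of separable sets being separable, this gives $d(G/N)\le\aleph_0$.

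I expect the main obstacle to be exactly this last step: passing from the clean normal case to a genuine coset space, that is, establishing separability (via dyadicity) of the compact pieces of $G/N$ when $N$ is not normal, where the group-theoretic tools (Kakutani--Kodaira and the three space property) are not directly available and one must instead rely on the homogeneity and coset structure of $G/N$, reducing to the normal case through the normal core $\bigcap_{g\in G}gNg^{-1}$ or arguing the dyadicity of compact coset spaces directly.
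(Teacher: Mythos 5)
Your forward implication, the $\sigma$-compactness argument, and your diagnosis are all exactly right --- including the observation that $w(K)\leq\cont$ alone does not yield separability of a compactum and that dyadicity is what one should aim for. But the proposal stops precisely where the lemma's real content begins: you never prove that the compact pieces of $G/N$ are dyadic, and you acknowledge as much in your final paragraph. The missing ingredient is a theorem of Uspenskij \cite{Usp1} (Theorem~2 there): \emph{every compact $G_\delta$-subset of a quotient space of an $\omega$-narrow topological group is dyadic}. This is what the paper invokes, and note that it requires the compact set to be a $G_\delta$; accordingly the paper inserts a small fattening step that your sketch lacks: having covered $G/N$ by compacta $K_n$, it chooses open sets $O_n\supset K_n$ with compact closure and uses normality of the (locally compact, Lindel\"of) space $G/N$ to squeeze a closed $G_\delta$-set $F_n$ between $K_n$ and $O_n$; each $F_n$ is then a compact $G_\delta$ in $G/N$, hence dyadic by Uspenskij, hence separable by Proposition~\ref{Pro:Eng} since $w(F_n)\leq\cont$, and $G/N=\bigcup_n F_n$ is separable. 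Without the $G_\delta$ hypothesis there is no general dyadicity statement available for arbitrary compact subsets of coset spaces, so your plan to ``argue the dyadicity of compact coset spaces directly'' has no proof behind it as written.

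Your two fallback ideas do not close the gap either. The normal-core reduction fails structurally: writing $N'=\bigcap_{g\in G}gNg^{-1}$, the space $G/N$ is the coset space of the group $G/N'$ by the closed subgroup $N/N'$, so you are back to a genuine (non-group) coset space and have gained nothing; moreover $G/N'$ need not be locally compact. The Kakutani--Kodaira route is correct but only covers the case where $N$ is normal in $G$, which the lemma does not assume (and which its application in Theorem~\ref{Th:Main_B} does not provide, since there $G/N$ is a left coset space). So the proposal is a correct reduction plus an honestly flagged hole; the paper's proof fills that hole with the Uspenskij dyadicity theorem together with the compact-$G_\delta$ fattening trick.
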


\begin{proof}
Let $\pi\colon G\to G/N$ be the quotient mapping of $G$ onto the locally compact left coset space $G/N$. If $G/N$ is separable, then Theorem~\ref{Th:1} implies that $w(G/N)\leq\cont$. Assume therefore that $w(G/N)\leq\cont$.

Take an open neighborhood $U$ of the identity element in $G$ such that the closure of the open set $\pi(U)$ in $G/N$ is compact. Since $G$ is $\omega$-narrow, there exists a countable set $C\subset G$ such that $G=CU$. Hence the compact sets $\overline{\pi(xU)}$, with $x\in C$, cover the space $G/N$. This proves that $G/N$ is $\sigma$-compact. 

Let $\{K_n: n\in\omega\}$ be a countable family of compact sets that covers $G/N$. Making use of the local compactness of $G/N$ we can find, for every $n\in\omega$, an open set $O_n$ with compact closure in $G/N$ such that $K_n\subset O_n$. Since the space $G/N$ is normal, there exists a closed $G_\delta$-set $F_n$ in $G/N$ such that $K_n\subset F_n\subset O_n$. It is clear that $F_n$ is compact for each $n\in\omega$. Summing up, each $F_n$ is a compact $G_\delta$-set in the quotient space $G/N$ of the $\omega$-narrow topological group $G$, so Theorem~2 of \cite{Usp1} implies that each $F_n$ is a dyadic compact space. As $w(F_n)\leq w(G/N)\leq\cont$ for each $n\in\omega$, it follows from Proposition~\ref{Pro:Eng} that each $F_n$ is separable. The inclusions $K_n\subset F_n$ with $n\in\omega$ imply that $G/N=\bigcup_{n\in\omega} F_n$, so the space $G/N$ is separable. 
\end{proof}

\begin{remark}\label{Re:Corr}
{\rm Theorem~\ref{Theorem 10}  would not be valid in $ZFC$ if one replaced the condition $w(G)\leq\cont$ by the weaker one, $|G|\leq 2^\cont$. Indeed, the compact topological group $G=\{0,1\}^\kappa$ with $\kappa=\cont^+$ satisfies $w(G)=\kappa$, so $G$ is not separable by Theorem~\ref{Theorem 3}. However, it is consistent with $ZFC$ that $|G|=2^\kappa=2^\cont$ (see  \cite[Chap.~VIII, Sect.~4]{Kun}).}
\end{remark}

Theorem~\ref{Theorem 10} generalizes the second part of Theorem~\ref{Theorem 3}. It is also clear that Theorem~\ref{Theorem 10} is not valid for arbitrary locally compact groups\,\,---\,\,it suffices to take a discrete group of cardinality $\cont$.

A family $\mathcal{N}$ of subsets of a topological space $Y$ is called a \textit{network} for $Y$ if for every point $y \in Y$ and any neighbourhood $U$ of $y$ there exists a set $F \in\mathcal{N}$ such that $y \in F \subset U$. The \textit{network weight} $nw(Y)$ of a space $Y$ is defined as the smallest cardinal number of the form $|\mathcal{N}|$, where $\mathcal{N}$ is a network for $Y$.     

\begin{lemma}\label{Le:Dya} 
If $L$ is a Lindel\"of subspace of a separable Hausdorff space $X$, then $nw(L)\leq\cont$. Hence every compact subspace $K$ of a separable Hausdorff space satisfies $w(K)\leq\cont$.
\end{lemma}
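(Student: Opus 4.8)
The plan is to prove the network-weight statement first and then read off the weight bound for compact subspaces from it, using the standard fact that $w(K)=nw(K)$ for every compact Hausdorff space $K$. Throughout I fix a countable dense subset $D$ of the separable Hausdorff space $X$. The guiding idea is that, although a separable Hausdorff space need not have a base of size $\le\cont$, it does have only $\le\cont$ many \emph{closures of subsets of $D$}, and these (together with their complements and countable intersections) turn out to be rich enough to form a network for any Lindel\"of subspace. The three hypotheses each play a role: separability supplies $D$ and controls cardinalities, the Hausdorff property supplies point separation, and Lindel\"of-ness lets me pass to countable subcovers.

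First I would record two elementary density facts, valid because $D$ is dense in $X$: for every open set $W$ one has $W\subseteq\overline{W\cap D}$; and if $O,W$ are open with $O\cap W=\emptyset$ and $y\in O$, then $y\notin\overline{W\cap D}$ (indeed $O$ is an open neighbourhood of $y$ disjoint from $W\cap D$). Now fix $y\in L$ and an open set $U$ of $X$ with $y\in U$; the goal is to produce a network element trapped between $y$ and $U\cap L$. The set $L\setminus U$ is closed in $L$, hence Lindel\"of. For each $z\in L\setminus U$ the Hausdorff property gives disjoint open sets $O_z\ni y$ and $W_z\ni z$; put $B_z=W_z\cap D\subseteq D$. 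By the density facts, $z\in W_z\subseteq\overline{B_z}$ while $y\notin\overline{B_z}$. The family $\{W_z:z\in L\setminus U\}$ covers the Lindel\"of set $L\setminus U$, so there is a countable subfamily $\{W_{z_n}:n\in\omega\}$ still covering it. Setting $N=\bigcap_{n\in\omega}\bigl(X\setminus\overline{B_{z_n}}\bigr)$, one checks that $y\in N$ (since $y\notin\overline{B_{z_n}}$ for all $n$) and that $N\cap(L\setminus U)=\emptyset$ (any $p\in L\setminus U$ lies in some $W_{z_n}\subseteq\overline{B_{z_n}}$, hence is excluded from $N$). Thus $y\in N\cap L\subseteq U\cap L$.

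It remains to bound the size of the resulting family. Let $\mathcal{N}$ be the collection of all sets of the form $\bigl(\bigcap_{n\in\omega}(X\setminus\overline{B_n})\bigr)\cap L$ with each $B_n\subseteq D$. The previous paragraph shows $\mathcal{N}$ is a network for $L$, and since such a set is coded by a sequence in $P(D)^{\omega}$ we get $|\mathcal{N}|\le(2^{\aleph_0})^{\aleph_0}=2^{\aleph_0}=\cont$, whence $nw(L)\le\cont$. For the final assertion, a compact subspace $K$ of $X$ is Lindel\"of, so $nw(K)\le\cont$ by what was just proved; as $K$ is compact Hausdorff, the well-known equality $w(K)=nw(K)$ yields $w(K)\le\cont$.

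The main obstacle is the network construction itself: one must encode the Hausdorff-separating open neighbourhoods by data drawn from the countable set $D$, so that the total number of network elements stays at $\cont$ rather than the $2^{\cont}$ one would get from arbitrary open sets. The two density facts are exactly what let me replace each separating open set $W_z$ by the closure $\overline{W_z\cap D}$ without losing either the inclusion $z\in\overline{W_z\cap D}$ or the exclusion $y\notin\overline{W_z\cap D}$; combined with the Lindel\"of reduction to a countable subcover and the arithmetic $\cont^{\aleph_0}=\cont$, this closes the argument.
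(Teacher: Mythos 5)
Your proof is correct and takes essentially the same route as the paper: both arguments rest on the family $\{\overline{C}:C\subseteq D\}$ of at most $\cont$ closures of subsets of the countable dense set $D$, on its point-separating property coming from Hausdorffness plus density, on a Lindel\"of extraction of a countable subfamily, and on the arithmetic $\cont^{\aleph_0}=\cont$, finishing the compact case via $w(K)=nw(K)$. The only cosmetic difference is that the paper's network elements are countable intersections of the closures themselves (those containing the given point $x$), whereas yours are countable intersections of their complements built around the points of $L\setminus U$\,---\,the same construction up to complementation.
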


\begin{proof}
Denote by $D$ a countable dense subset of $X$. Let
$$
\mathcal{D}=\bigl\{\overline{C}: C\subseteq D\bigr\}\, \mbox{ and }\, 
\mathcal{N}=\bigl\{\,\bigcap\gamma: \gamma\subseteq\mathcal{D},\ |\gamma|\leq\omega\bigr\}.
$$
Then $|\mathcal{D}|\leq\cont$, $|\mathcal{N}|\leq\cont^\omega=\cont$, and we claim that
the family $\{N\cap L:  N\in\mathcal{N}\}$ is a network for $L$. First we note the family $\mathcal{D}$ separates points of $X$. In other words, for every distinct elements $x,y\in X$, there exists $C\in\mathcal{D}$ such that $x\in C\not\ni y$. This is clear since the space $X$ is Hausdorff. Take a point $x\in L$ and an arbitrary open neighborhood $U$ of $x$ in $X$. Denote by $\mathcal{D}_x$ the family of all $C\in\mathcal{D}$ with $x\in C$. Since $\mathcal{D}$ separates points of $X$, we see that $\bigcap\mathcal{D}_x=\{x\}$. Using the Lindel\"of property of $L$, we can find a countable subfamily $\gamma$ of $\mathcal{D}_x$ such that $L\cap\bigcap\gamma\subseteq L\cap U$. Then $F=\bigcap\gamma\in\mathcal{N}$ and $x\in L\cap F\subseteq L\cap U$. This proves that $\mathcal{N}$ is a network for $L$. 

If $K$ is a compact subset of $X$, then $w(K)=nw(K)$. Since $K$ is obviously Lindel\"of, we conclude that $w(K)\leq\cont$. 
\end{proof}

Combining Lemma~\ref{Le:Dya} and Theorem~\ref{Theorem 3}, we deduce the following fact:

\begin{corollary}\label{Corollary 12} 
If a compact group $G$ is a subspace of a separable Hausdorff space, then $G$ is separable. 
\end{corollary}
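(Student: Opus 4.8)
The plan is to combine Lemma~\ref{Le:Dya} with the second part of Theorem~\ref{Theorem 3}, so the argument reduces to checking that the hypotheses of these two results chain together. First I would observe that a compact group $G$ which is embedded as a subspace of a separable Hausdorff space $X$ is, in particular, a \emph{compact} subspace of $X$. This is exactly the situation addressed by the final assertion of Lemma~\ref{Le:Dya}, which therefore applies and yields the weight bound $w(G)\le\cont$.

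Next I would feed this bound into Theorem~\ref{Theorem 3}. Assuming $G$ is infinite, the last sentence of Theorem~\ref{Theorem 3} states precisely that a compact group of weight at most $\cont$ is separable, and so we conclude that $G$ is separable. The only case not formally covered by Theorem~\ref{Theorem 3}, whose statement concerns infinite compact groups, is that of a finite group; but a finite space is trivially separable, so the desired conclusion holds in all cases.

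I do not expect any genuine obstacle here, since the mathematical content of the corollary has already been established in the two quoted results. The argument is simply a matter of verifying that the hypothesis of Lemma~\ref{Le:Dya} (compactness of the subspace in question) is satisfied and that its output, the inequality $w(G)\le\cont$, is exactly the hypothesis required by Theorem~\ref{Theorem 3}. The single minor point worth recording is the harmless split into the finite and infinite cases noted above.
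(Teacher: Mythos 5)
Your proposal is correct and follows exactly the paper's route: the paper deduces the corollary by combining the final assertion of Lemma~\ref{Le:Dya} (giving $w(G)\le\cont$) with the second part of Theorem~\ref{Theorem 3}. Your explicit remark about the finite case is a harmless extra detail the paper leaves implicit.
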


It turns out that the compactness of $G$ in Corollary~\ref{Corollary 12} cannot
be weakened to $\sigma$-compactness: 

\begin{example}\label{Example 2} (See \cite[Lemma~3.1]{Comfort_Itzkowitz})
Let $X$ be any separable compact space which contains a closed non-separable subspace $Y$. The free abelian topological group $A(Y)$ naturally embeds into $A(X)$ as a closed subgroup. Then $A(X)$ is a separable $\sigma$-compact group, while $A(Y)$ is not separable\,\,---\,\, otherwise $Y$ would be separable.
\end{example}

The conclusion of Corollary~\ref{Corollary 12} remains valid for connected pro-Lie groups. Later, in Theorem~\ref{Th:Main}, we will show that \lq\lq{connected\rq\rq} can be weakened to \lq\lq{almost connected\rq\rq}.

\begin{proposition}\label{Le:pro-L} 
If a connected pro-Lie group $G$ is a subspace of a separable Hausdorff space $X$, 
then $G$ is separable. 
\end{proposition}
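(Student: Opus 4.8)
The plan is to reduce everything to the structure theorem for connected pro-Lie groups and then invoke the weight criterion of Theorem~\ref{Theorem 10}. By Theorem~\ref{Theorem 7}, $G$ is homeomorphic to $C\times\R^\kappa$, where $C$ is a maximal compact connected subgroup of $G$ and $\kappa$ is a cardinal. Since $G$ embeds homeomorphically into the separable Hausdorff space $X$, so does $C\times\R^\kappa$; I would fix such an embedding and regard suitable slices $C\times\{0\}$ and $\{c_0\}\times\R^\kappa$ (for a fixed $c_0\in C$) as subspaces of $X$, homeomorphic respectively to $C$ and to $\R^\kappa$.

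First I would bound the weight of $G$ by $\cont$. The slice $C\times\{0\}$ is homeomorphic to the compact group $C$, so applying Lemma~\ref{Le:Dya} to the compact subspace $C$ of the separable Hausdorff space $X$ gives $w(C)\le\cont$. For the Euclidean factor, the key observation is that the Tychonoff cube $[0,1]^\kappa$ (equivalently, the Cantor cube $\{0,1\}^\kappa$) embeds as a compact subspace of $\R^\kappa$, hence as a compact subspace of $X$. Applying Lemma~\ref{Le:Dya} once more yields $\kappa=w([0,1]^\kappa)\le\cont$ when $\kappa$ is infinite, and the bound is trivial when $\kappa$ is finite.

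Combining these estimates gives $w(G)=w(C)\cdot w(\R^\kappa)\le\cont$, since $w(\R^\kappa)\le\max(\kappa,\aleph_0)\le\cont$. As $G$ is connected it is in particular almost connected, and it is a pro-Lie group by hypothesis; hence Theorem~\ref{Theorem 10} applies and shows that $G$ is separable, as required.

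The only genuinely nontrivial step is the bound $\kappa\le\cont$, and the main obstacle is to exhibit inside $\R^\kappa$ a compact set whose weight is exactly $\kappa$, so that Lemma~\ref{Le:Dya} can be brought to bear; the Tychonoff (or Cantor) cube does precisely this. An alternative to the final step would bypass Theorem~\ref{Theorem 10} altogether: once $\kappa\le\cont$ is known, $\R^\kappa$ is separable by the Hewitt--Marczewski--Pondiczery theorem (Theorem~\ref{HMP}), $C$ is separable by Corollary~\ref{Corollary 12}, and then $C\times\R^\kappa\cong G$ is separable as a product of two separable spaces.
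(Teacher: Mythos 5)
Your proof is correct and follows essentially the same route as the paper: decompose $G\cong C\times\R^\kappa$ via Theorem~\ref{Theorem 7}, apply Lemma~\ref{Le:Dya} to the compact factor $C$ and to a Cantor (or Tychonoff) cube of weight $\kappa$ embedded in $\R^\kappa$ to get $w(G)\le\cont$, and conclude by Theorem~\ref{Theorem 10}. Your alternative ending via Theorem~\ref{HMP} and Corollary~\ref{Corollary 12} is a harmless variation, since it is exactly how Theorem~\ref{Theorem 10} is proved anyway.
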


\begin{proof}
By Theorem~\ref{Theorem 7}, the connected pro-Lie group $G$ is homeomorphic to the product $C\times {\R}^\kappa$, where $C$ is a compact connected group and $\kappa$ is a cardinal. Since $C$ can be identified with a subspace of $G$, Lemma~\ref{Le:Dya} implies that $w(C)\leq\cont$. Further, ${\R}^\kappa$ contains a compact subspace homeomorphic with $\{0,1\}^\kappa$. Since the latter space has weight $\kappa$, we apply Lemma~\ref{Le:Dya} once again to conclude that $\kappa\leq\cont$. Hence $w(G)\leq\cont$. Therefore $G$ is separable, by Theorem~\ref{Theorem 10}.
\end{proof}

Since the class of connected pro-Lie groups is productive and contains connected locally compact groups \cite{PROBOOK}, the next fact is immediate from Proposition~\ref{Le:pro-L}.

\begin{corollary}\label{Corollary 13} 
Let $G$ be a product of connected locally compact groups. If $G$ is a subspace of a separable Hausdorff space $X$, then $G$ is separable. 
\end{corollary}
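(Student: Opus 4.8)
The plan is to reduce the statement directly to Proposition~\ref{Le:pro-L} by recognizing the product $G=\prod_{i\in I}G_i$ of connected locally compact groups $G_i$ as an instance of a \emph{connected pro-Lie group}. Once this identification is made, the hypothesis that $G$ is a subspace of a separable Hausdorff space $X$ lets us invoke Proposition~\ref{Le:pro-L} verbatim to conclude that $G$ is separable.

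First I would recall from the background discussion following Definition~\ref{pro-Lie group} (based on \cite{PROBOOK}) that every connected locally compact topological group is a pro-Lie group. Hence each factor $G_i$ is a pro-Lie group, and it is of course connected, so every $G_i$ belongs to the class of connected pro-Lie groups.

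Next I would verify that this class is closed under the formation of arbitrary products, which splits into two standard observations. On the one hand, an arbitrary product of connected topological spaces is connected, so $G$ is connected. On the other hand, the background facts on pro-Lie groups assert that any finite or infinite product of pro-Lie groups is again a pro-Lie group, so $G$ is a pro-Lie group. Combining the two, $G$ is a connected pro-Lie group.

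Finally, since $G$ is by hypothesis a subspace of the separable Hausdorff space $X$, Proposition~\ref{Le:pro-L} applies at once and yields the separability of $G$. There is no real obstacle here: the whole content of the corollary is the identification of a product of connected locally compact groups as a connected pro-Lie group, and the only point that warrants a word of justification is the productivity of that class, which is immediate from the productivity of connectedness together with the stated productivity of the pro-Lie property.
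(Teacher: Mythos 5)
Your proposal is correct and follows exactly the paper's own route: the paper likewise deduces the corollary immediately from Proposition~\ref{Le:pro-L}, noting that connected locally compact groups are connected pro-Lie groups and that this class is productive. Your only addition is to unpack the productivity into the two standard facts (products of connected spaces are connected; products of pro-Lie groups are pro-Lie), which the paper leaves implicit.
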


The next result, one of the main in this section, extends Corollary~\ref{Corollary 12} and Proposition~\ref{Le:pro-L} to almost connected pro-Lie groups.

\begin{theorem}\label{Th:Main}
Let $G$ be an almost connected pro-Lie group. If $G$ is homeomorphic to a subspace of a separable Hausdorff space, then it is separable as well.
\end{theorem}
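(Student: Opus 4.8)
The plan is to reduce the statement to a weight estimate and then invoke Theorem~\ref{Theorem 10}. Since $G$ is an almost connected pro-Lie group, Theorem~\ref{Theorem 10} asserts that $G$ is separable precisely when $w(G)\le\cont$. Consequently it suffices to prove that the hypothesis---that $G$ is homeomorphic to a subspace of a separable Hausdorff space $X$---forces $w(G)\le\cont$.

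To estimate the weight of $G$, I would apply the structure theorem Theorem~\ref{Th_HM}: up to homeomorphism $G=\R^\kappa\times\{0,1\}^\lambda\times B$, where $B$ is a compact connected group and $\kappa,\lambda$ are cardinals, so that $w(G)=\kappa\cdot\lambda\cdot w(B)$. The strategy is then to bound each of the three cardinals $\kappa$, $\lambda$, and $w(B)$ separately by $\cont$, exactly as in the connected case treated in Proposition~\ref{Le:pro-L}, relying on Lemma~\ref{Le:Dya}, which bounds the weight of any compact subspace of a separable Hausdorff space by $\cont$.

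Concretely, each factor either is, or else contains, a compact subspace of $X$ of the relevant weight. The compact connected group $B$ may be identified with a subspace of $G$, hence of $X$, so Lemma~\ref{Le:Dya} gives $w(B)\le\cont$ directly. The factor $\{0,1\}^\lambda$ is itself compact and embeds in $X$, and since $w(\{0,1\}^\lambda)=\lambda$, Lemma~\ref{Le:Dya} yields $\lambda\le\cont$. For the non-compact factor $\R^\kappa$ I would use that $\{0,1\}$ embeds as a closed subspace of $\R$, so that $\{0,1\}^\kappa$ embeds as a compact subspace of $\R^\kappa$, and hence of $X$; as $w(\{0,1\}^\kappa)=\kappa$, Lemma~\ref{Le:Dya} forces $\kappa\le\cont$. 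Combining the three bounds gives $w(G)=\kappa\cdot\lambda\cdot w(B)\le\cont$, and Theorem~\ref{Theorem 10} then completes the proof.

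I do not anticipate a serious obstacle here: the argument is a routine combination of the structure theorem and Lemma~\ref{Le:Dya}, and it extends Proposition~\ref{Le:pro-L} simply by carrying the additional discrete-cube factor $\{0,1\}^\lambda$ through the same weight estimate. The only point requiring any care is the non-compact factor $\R^\kappa$, whose weight must be captured through the embedded compact cube $\{0,1\}^\kappa$ rather than read off directly.
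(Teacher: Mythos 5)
Your proposal is correct and follows essentially the same route as the paper: the paper likewise applies Theorem~\ref{Th_HM} and then invokes Proposition~\ref{Le:pro-L} for the factor $\R^\kappa$ and Corollary~\ref{Corollary 12} for the compact factor $\{0,1\}^\lambda\times B$, results whose proofs consist of exactly the weight bounds via Lemma~\ref{Le:Dya} (including the trick of embedding $\{0,1\}^\kappa$ in $\R^\kappa$) that you carry out by hand. In effect you have inlined those two auxiliary results into a single estimate $w(G)\le\cont$ followed by Theorem~\ref{Theorem 10}, which is a sound and equivalent presentation.
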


\begin{proof}
By Theorem~\ref{Th_HM}, the group $G$ is homeomorphic to $\R^\kappa \times \{0,1\}^\lambda \times B$, where $\{0,1\}$ is the discrete two-element group, $B$ is a compact connected group, and $\kappa,\lambda$ are cardinals. Assume that $G$ is homeomorphic to a subspace of a separable Hausdorff space. The connected pro-Lie group $\R^\kappa$ is separable by Proposition~\ref{Le:pro-L}. The compact group $K=\{0,1\}^\lambda\times B$ is separable according to Corollary~\ref{Corollary 12}. Therefore $G$ is separable as the product of two separable spaces, $\R^\kappa$ and $K$. (Let us note that $\kappa\cdot\lambda\leq\cont$.)
\end{proof}

In the following result we establish a simple relationship between almost connected pro-Lie groups and $\omega$-narrow groups.

\begin{proposition}\label{Le:Re}
Every almost connected pro-Lie group has countable cellularity and hence is $\omega$-narrow.
\end{proposition}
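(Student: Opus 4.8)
The plan is to use the topological decomposition of Theorem~\ref{Th_HM} to realize $G$ as a continuous image of a product of separable spaces, then to invoke the classical fact that such products satisfy the countable chain condition, and finally to pass from countable cellularity to $\omega$-narrowness. First I would apply Theorem~\ref{Th_HM} to write $G$, up to homeomorphism, as $\R^\kappa\times\{0,1\}^\lambda\times B$, where $B$ is a compact connected group and $\kappa,\lambda$ are cardinals. The only factor that may fail to be separable is $B$, since $w(B)$ can exceed $\cont$. To handle it I would use that every compact group is dyadic: as recorded after Theorem~\ref{Theorem 3}, there is a continuous surjection $\varphi\colon\{0,1\}^\mu\to B$ with $\mu=w(B)$. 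Consequently the map
$$
\mathrm{id}_{\R^\kappa}\times\mathrm{id}_{\{0,1\}^\lambda}\times\varphi\colon\ \R^\kappa\times\{0,1\}^\lambda\times\{0,1\}^\mu\ \longrightarrow\ \R^\kappa\times\{0,1\}^\lambda\times B
$$
is a continuous surjection, so $G$ is a continuous image of the space $P=\R^\kappa\times\{0,1\}^\lambda\times\{0,1\}^\mu$ (note that only the compactness of $B$, not its connectedness, is used here).

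Next I would observe that $P$ is a product of separable spaces, each factor being a copy of $\R$ or of the two-point space $\{0,1\}$, and that an arbitrary product of separable spaces satisfies the countable chain condition. This is a standard consequence of the $\Delta$-system lemma: finite subproducts of separable spaces are separable and hence ccc, and a $\Delta$-system argument applied to the finite supports of a putative uncountable disjoint family of basic open boxes transfers this to the full product. Therefore $c(P)\leq\omega$. Since cellularity does not increase under continuous surjections — if $f\colon X\to Y$ is a continuous surjection and $\{V_\alpha\}$ is a disjoint family of nonempty open sets in $Y$, then $\{f^{-1}(V_\alpha)\}$ is such a family in $X$ — we conclude that $c(G)\leq c(P)\leq\omega$, that is, $G$ has countable cellularity.

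Finally, to deduce that $G$ is $\omega$-narrow I would use the inequality $ib(G)\leq c(G)$ valid for every topological group $G$ \cite[Section~5.4]{AT}. Equivalently, there is a short direct argument: given a neighborhood $U$ of the identity, choose a neighborhood $V$ of the identity with $VV^{-1}\subseteq U$ and take a maximal family $\{x_i\}$ for which the left translates $x_iV$ are pairwise disjoint. Countable cellularity forces this family to be countable, while maximality yields $G=\bigcup_i x_iU$, so $G$ is covered by countably many translates of $U$ and is therefore $\omega$-narrow.

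The main obstacle is precisely the possibly non-separable compact factor $B$: one cannot simply argue factor-by-factor, because a product of ccc spaces need not be ccc. The device of replacing $B$ by its Cantor-cube preimage $\{0,1\}^\mu$, reducing the whole group to a continuous image of a product of \emph{separable} spaces, is exactly what makes the cellularity estimate go through.
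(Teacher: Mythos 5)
Your proof is correct, and it diverges from the paper's argument at the decisive step. The paper also begins with the decomposition of Theorem~\ref{Th_HM}, but then proceeds factor-by-factor: $\R^\kappa\times\{0,1\}^\lambda$ has countable cellularity as a product of separable spaces \cite[Theorem~2.3.17]{Eng}, the compact group $B$ has countable cellularity by \cite[Corollary~4.1.8]{AT}, and the cellularity of the full triple product is then controlled by invoking \cite[Corollary~5.4.9]{AT}\,\,---\,\,a genuine product theorem that is needed precisely because, as you correctly observe, the countable chain condition is not productive in $ZFC$. You sidestep that theorem entirely: using the dyadicity of compact groups (recorded in the paper after Theorem~\ref{Theorem 3}) you replace $B$ by a Cantor cube $\{0,1\}^\mu$ mapping onto it, exhibit $G$ as a continuous image of a single product of separable spaces, and conclude via the $\Delta$-system argument together with the elementary fact that cellularity does not increase under continuous surjections. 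The two routes rest on comparable deep inputs\,\,---\,\,your use of the Ivanovski\u{\i}--Kuzminov dyadicity theorem versus the paper's appeal to the cellularity machinery of \cite{AT}, the former in fact underlying the latter\,\,---\,\,but yours is more self-contained, trading two citations for one plus two standard combinatorial facts. Your closing step, deducing $\omega$-narrowness from countable cellularity by taking a maximal family of pairwise disjoint translates $x_iV$ with $VV^{-1}\subseteq U$, is exactly the standard proof of \cite[Theorem~3.4.7]{AT}, which the paper simply cites; spelling it out is harmless and the maximality argument is carried out correctly.
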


\begin{proof}
By Theorem~\ref{Th_HM}, every almost connected pro-Lie group $G$ is homeomorphic to the product $\R^\kappa\times\{0,1\}^\lambda\times B$, where $B$ is a compact connected group and $\kappa,\lambda$ are cardinals. The space $\R^\kappa\times\{0,1\}^\lambda$ has countable cellularity as a product of separable spaces \cite[Theorem~2.3.17]{Eng}. The compact group $B$ also has countable cellularity by \cite[Corollary~4.1.8]{AT}. Hence the cellularity of the space $\R^\kappa\times\{0,1\}^\lambda\times B$ is countable (see Corollary~5.4.9 of \cite{AT}). Finally, every topological group of countable cellularity is $\omega$-narrow according to \cite[Theorem~3.4.7]{AT}.
\end{proof}

It turns out that Theorem~\ref{Th:Main} extends to a wider class of topological groups which contains both almost connected pro-Lie groups and locally compact $\sigma$-compact groups. First we need a simple auxiliary result which complements Pontryagin's open homomorphism theorem (see \cite[Theorem~3]{Mor}).

\begin{lemma}\label{Le:aux}
Let $f\colon X\to Y$ be a continuous bijection. If the spaces $X$ and $Y$ are locally compact and $\sigma$-compact and $X$ is homogeneous, then $f$ is a homeomorphism.  
\end{lemma}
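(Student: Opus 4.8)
The plan is to prove that \(f\) is an \emph{open} map, which suffices: a continuous open bijection is automatically a homeomorphism. So throughout I aim to show that for every open \(U\subseteq X\) the image \(f(U)\) is a neighbourhood of each of its points.

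The first step is a Baire-category argument producing a nonempty open set on which \(f\) is well behaved. Since \(X\) is \(\sigma\)-compact, write \(X=\bigcup_{n\in\ome}K_n\) with each \(K_n\) compact. Then \(Y=\bigcup_{n\in\ome}f(K_n)\), and each \(f(K_n)\) is compact, hence closed in the Hausdorff space \(Y\). As \(Y\) is locally compact and Hausdorff it is a Baire space, so some \(f(K_{n_0})\) has nonempty interior \(W\). The restriction \(f|_{K_{n_0}}\colon K_{n_0}\to f(K_{n_0})\) is a continuous bijection from a compact space onto a Hausdorff space, hence a homeomorphism. Because \(f\) is injective, \(O:=f^{-1}(W)\subseteq K_{n_0}\), and \(O\) is open in \(X\) by continuity of \(f\); restricting the homeomorphism \(f|_{K_{n_0}}\) to \(O\) shows that \(f|_O\colon O\to W\) is a homeomorphism onto the open set \(W\). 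In particular, \(f\) is open at every point of the nonempty open set \(O\).

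The second step is to globalize this local information using homogeneity. Since the homeomorphism group of \(X\) acts transitively, the translates \(\{h(O):h\in\mathrm{Homeo}(X)\}\) form an open cover of \(X\); as \(X\) is \(\sigma\)-compact, hence Lindel\"of, we may extract a countable subcover \(X=\bigcup_{k\in\ome}h_k(O)\). It is useful here to reformulate the problem by identifying \(X\) and \(Y\) along \(f\): we then have two locally compact \(\sigma\)-compact Hausdorff topologies \(\tau'\sub\tau\) on one underlying set, with \((X,\tau)\) homogeneous, and the goal becomes \(\tau'=\tau\). The content of the first step is that the maximal \(\tau\)-open set \(\Omega\) on which \(\tau\) and \(\tau'\) agree contains \(O\), so \(\Omega\neq\emp\); the task is to show \(\Omega=X\).

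The step I expect to be the main obstacle is exactly this globalization. The difficulty is that a self-homeomorphism \(h_k\) of \((X,\tau)\) need not interact with \(\tau'\) (equivalently, with \(f\)) in any way, so one cannot naively transport the homeomorphism \(f|_O\) along \(h_k\) to conclude that \(f\) is a homeomorphism on \(h_k(O)\); the translated sets carry no a priori control of \(\tau'\). This is precisely the point at which homogeneity, rather than mere good behaviour at a single point, must be exploited, and it is where the essential work lies: the analogous statement is false for a continuous bijection of the half-open interval onto the circle, which is open at every interior point but not at the endpoint, the domain failing to be homogeneous. My plan for closing the gap is to show that \(\Omega\) cannot be a proper subset by combining the transitivity-plus-Lindel\"of covering above with a second Baire argument carried out on the closed remainder \(X\sm\Omega\) (which is again locally compact and \(\sigma\)-compact in both topologies), using homogeneity to rule out that the agreement set \(\Omega\) omits any \(\tau\)-translate of an interior point of a compact neighbourhood. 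Verifying that this transport genuinely respects \(\tau'\) is the crux of the proof and the part I would scrutinize most carefully.
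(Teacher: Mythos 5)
Your first step coincides with the paper's almost verbatim: the paper covers $X$ by countably many homeomorphic copies $\alpha(U)$ of a fixed open set with compact closure, pushes the cover forward, invokes the Baire property of the locally compact space $Y$, and uses the fact that a continuous bijection from a compact space onto a Hausdorff space is a homeomorphism to produce a nonempty open set $W\sub X$ mapped homeomorphically onto an open set $V\sub Y$. Your variant, covering $Y$ directly by the closed sets $f(K_n)$ coming from $\sigma$-compactness of $X$, is a harmless (indeed slightly cleaner) simplification. But your proposal stops exactly where the paper's proof does: after producing $O$, the paper concludes with the single sentence ``therefore, by the homogeneity argument, $f$ is a homeomorphism,'' and this transport step is precisely what you flag as the crux and never carry out. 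So the proposal is not a proof: it is the routine half of the argument together with an accurate diagnosis of, but no solution to, the hard half.

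Moreover, the gap you identify cannot be closed at the level of generality at which you (and, literally read, the lemma) pose it: homogeneity of $X$ alone, with no compatibility between $\mathrm{Homeo}(X)$ and $f$, is genuinely insufficient. Take $X=\Z$ with the discrete topology (locally compact, $\sigma$-compact, homogeneous) and $Y=\{0\}\cup\{1/n: n\in\N\}\sub\R$ with the Euclidean topology (compact Hausdorff, hence locally compact and $\sigma$-compact); any bijection $f\colon X\to Y$ is continuous but is not a homeomorphism. In this example your agreement set $\Omega$ is everything except the preimage of $0$, the remainder $X\sm\Omega$ is a single point on which the two topologies trivially agree, and no Baire argument on the remainder can see anything\,\,---\,\,so the globalization plan you sketch necessarily fails, and the statement with bare homogeneity is in fact false. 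What makes the lemma work where the paper applies it (Theorem~\ref{Th:Main_B}) is that the bijection there is the map $i\colon G/N\to H/K$ induced by the isomorphism $\pi$, hence \emph{equivariant}: $i(g\cdot\xi)=\pi(g)\cdot i(\xi)$, where $G$ acts transitively by homeomorphisms on $G/N$ and each $\pi(g)$ acts by a homeomorphism on $H/K$. With that hypothesis the transport is immediate: if $f$ maps $W$ homeomorphically onto the open set $V$ and $x=g\cdot w$ with $w\in W$, then $f(g\cdot W)=\pi(g)\cdot V$ is open, so $f$ is open at every point and hence a homeomorphism. Your instinct to scrutinize exactly this step was correct; the honest repair is to strengthen ``$X$ is homogeneous'' to such compatible homogeneity (equivariance of $f$ with respect to transitive actions), which is what the paper's one-line ``homogeneity argument'' silently uses.
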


\begin{proof}
Let $U_0\subset X$ be a non-empty open set with compact closure. Take a non-empty open set $U$ in $X$ such that $\overline{U}\subset U_0$. Denote by $Homeo(X)$ the family of all homeomorphisms of $X$ onto itself. Since $X$ is homogeneous and $\sigma$-compact, there exists a countable subfamily $\mathcal{A}\subset Homeo(X)$ such that $X=\bigcup\{\alpha(U): \alpha\in\mathcal{A}\}$. Note that the closure of $\alpha(U)$ is compact, for each $\alpha\in\mathcal{A}$.

The family $\{f(\alpha(U)): \alpha\in\mathcal{A}\}$ is a countable cover of $Y$. Since $Y$ is locally compact it has the Baire property. Hence there exists $\alpha\in\mathcal{A}$ such that the closure of $f(\alpha(U))$ has a non-empty interior in $Y$. Let $V$ be a non-empty open set in $Y$ contained in $\overline{f(\alpha(U))}=f(\alpha(\overline{U}))$. Since $f$ is one-to-one, we see that $f^{-1}(V)\subset \alpha(\overline{U})\subset \alpha(U_0)$, so $W=f^{-1}(V)$ is an open subset of $\alpha(U_0)$. The closure of $\alpha(U_0)$ in $X$ is the compact set $\alpha(\overline{U_0})$, so the restriction of $f$ to  
the open subset $W$ of $\alpha(\overline{U_0})$ is a homeomorphism of $W$ onto its image $V=f(W)$. Therefore, by the homogeneity argument, $f$ is a homeomorphism.
\end{proof}

\begin{theorem}\label{Th:Main_B}
Let $G$ be an $\omega$-narrow topological group which contains a closed subgroup $N$ such that $N$ is an almost connected pro-Lie group and the quotient space $G/N$ is locally compact. If $G$ is homeomorphic to a subspace of a separable Hausdorff space, then it is separable as well.
\end{theorem}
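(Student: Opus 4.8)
The plan is to reduce the separability of $G$ to that of the subgroup $N$ and the quotient space $G/N$ via the three space property of separability (Remark~\ref{Remark 8}), so the core of the argument is to establish that both $N$ and $G/N$ are separable under the stated hypotheses. First I would observe that since $G$ is homeomorphic to a subspace of a separable Hausdorff space $X$, every subspace of $G$ is also a subspace of $X$; in particular the closed subgroup $N$, being an almost connected pro-Lie group embedded as a subspace of the separable space $X$, is separable directly by Theorem~\ref{Th:Main}. Thus the subgroup part of the three space decomposition is immediate.

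The harder part is to deduce separability of the quotient space $G/N$, which is where Lemma~\ref{Le:PonGen} and Lemma~\ref{Le:Dya} come into play. Since $G$ is $\omega$-narrow and $G/N$ is locally compact by hypothesis, Lemma~\ref{Le:PonGen} tells me that it suffices to show $w(G/N)\leq\cont$. To bound this weight, I would exploit the local compactness: take a point of $G/N$ with a compact neighborhood, so $G/N$ contains a compact subset $K$ with nonempty interior. The natural route is to produce a continuous bijection or embedding relating a compact (or $\sigma$-compact) piece of $G$ to a compact piece of $G/N$, apply Lemma~\ref{Le:Dya} to conclude that compact subspaces have weight $\leq\cont$, and then patch these local weight bounds together. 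Concretely, I expect to use that $G/N$ is $\sigma$-compact (as shown inside the proof of Lemma~\ref{Le:PonGen}, using $\omega$-narrowness and a compact neighborhood) together with homogeneity of $G/N$ as a coset space to transfer the local weight bound $w(K)\leq\cont$ uniformly across the whole space.

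The role of the auxiliary Lemma~\ref{Le:aux} suggests the intended mechanism: one wants to realize $G/N$, or its compact pieces, inside the separable space $X$ so that Lemma~\ref{Le:Dya} applies. Since $N$ is separable and closed, one can choose a countable dense subset of $N$ and use it, together with a countable dense subset of $X$, to build a countable separating family on $G$ whose trace on suitable compact transversals to the cosets gives a network of cardinality $\leq\cont$ for the compact pieces of $G/N$. Here Lemma~\ref{Le:aux} guarantees that a natural continuous bijection between a locally compact $\sigma$-compact homogeneous model and $G/N$ is in fact a homeomorphism, so that weight estimates obtained on the model transfer to $G/N$ itself.

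I expect the main obstacle to be passing from a \emph{subspace} embedding of $G$ into a separable Hausdorff space to control on the \emph{quotient} $G/N$: separability and small weight are not in general inherited by quotient spaces, and the quotient map $\pi\colon G\to G/N$ need not carry the separable structure of $X$ over to $G/N$ in any direct way. The resolution must combine the algebraic structure (that $N$ is a closed subgroup and $G/N$ is homogeneous as a coset space) with the topological hypotheses ($\omega$-narrowness of $G$ and local compactness of $G/N$) to show $w(G/N)\leq\cont$; once that weight bound is secured, Lemma~\ref{Le:PonGen} yields separability of $G/N$, and the three space property then delivers separability of $G$.
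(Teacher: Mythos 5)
Your skeleton matches the paper's: separability of $N$ via Theorem~\ref{Th:Main}, reduction of the quotient to the weight bound $w(G/N)\leq\cont$ to be fed into Lemma~\ref{Le:PonGen}, and the three space property (Remark~\ref{Remark 8}) at the end. But the central step---actually proving $w(G/N)\leq\cont$---is not carried out, and the mechanism you sketch for it would fail. You propose to apply Lemma~\ref{Le:Dya} to ``compact pieces'' of $G/N$, transferring the weight bound from compact subspaces of the separable space $X$. The difficulty is that compact subsets of $G/N$ are not subspaces of $X$; only subsets of $G$ are. Your suggested fix---``compact transversals to the cosets'' built from countable dense subsets of $N$ and $X$---has no general justification: the quotient map $G\to G/N$ need not admit local cross-sections, and a compact subset of $G/N$ need not be the image of a compact subset of $G$ when $G$ itself is not locally compact (only the quotient is assumed locally compact here). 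You correctly identify this as the main obstacle, but identifying it is where your argument stops.

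The paper resolves it by a different device, which your proposal does not contain. From the embedding of $G$ in the separable Hausdorff space $X$ it first extracts a pseudocharacter bound $\psi(G)\leq\cont$, via the semiregularization of $X$: the sets $\Int_X\overline{D}$, with $D$ ranging over subsets of a countable dense set, form a base of size $\leq\cont$ for a coarser Hausdorff topology. Then, using $\omega$-narrowness and \cite[Corollary~3.4.19]{AT}, it builds a diagonal product $\pi$ of at most $\cont$ many continuous homomorphisms onto second countable groups, chosen so that: $\pi$ is injective (using $\psi(G)\leq\cont$); the restriction of $\pi$ to $N$ is a topological isomorphism onto $K=\pi(N)$, which is closed in $H=\pi(G)$ because the pro-Lie group $N$ is complete (using $w(N)\leq\cont$); and one extra homomorphism controls a neighborhood $U_0$ witnessing local compactness, so that $H/K$ is locally compact. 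This produces a continuous bijection $i\colon G/N\to H/K$ between locally compact, $\sigma$-compact, homogeneous spaces ($\sigma$-compactness coming from $\omega$-narrowness), and only then is Lemma~\ref{Le:aux} invoked, giving that $i$ is a homeomorphism and hence $w(G/N)=w(H/K)\leq w(H)\leq\cont$. So your guess about the role of Lemma~\ref{Le:aux} is directionally right, but the ``model'' it compares $G/N$ with is the quotient of a continuous isomorphic image of $G$ of weight $\leq\cont$, constructed group-theoretically---not anything extracted from compact subsets of $X$. Without this construction (or an independent proof that compact sets lift along the quotient map), your argument has a genuine gap at its crucial point.
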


\begin{proof}
Assume that $G$ is a subspace of a separable Hausdorff space $X$. Since $N\subset G\subset X$, it follows from Theorem~\ref{Th:Main} that the group $N$ is separable and, hence, $w(N)\leq\cont$ by Theorem~\ref{Th:1}.

Let $\tau$ be the topology of $X$. The family of all regular open sets in $X$ constitutes a base for a weaker topology  on $X$, say, $\sigma$. Since the topology $\tau$ is separable, the space $Y=(X,\sigma)$ has a base of the cardinality at most $\cont$. Indeed, let $S$ be a countable dense subset of $X$.
Then the family 
$$
\mathcal{B}=\{\Int_X\overline{D}: D\subseteq S\}\setminus\{\emptyset\}
$$
is a base for $Y$ and, clearly, $|\mathcal{B}|\leq\cont$. It is also clear that the space $Y$ is Hausdorff. We see in particular that the pseudocharacter of $Y$ is at most $\cont$, i.e. $\psi(Y)\leq\cont$. Since the identity mapping of $X$ onto $Y$ is continuous, it follows that $\psi(X)\leq\psi(Y)\leq\cont$. Hence the subspace $G$ of $X$ satisfies $\psi(G)\leq\cont$ as well. This is the first important property of the group $G$.

We claim that there exists a continuous isomorphism (not necessarily a homeomorphism) $\pi$ of $G$ onto a Hausdorff topological group $H$ with the following properties:
\begin{enumerate}
\item[(i)]   $w(H)\leq\cont$;
\item[(ii)]  the restriction of $\pi$ to $N$ is a topological isomorphism of $N$ onto the closed subgroup     $K=\pi(N)$ of $H$;
\item[(iii)] the quotient space $H/K$ is locally compact.
\end{enumerate}
Indeed, it follows from \cite[Corollary~3.4.19]{AT} that for every neighborhood $U$ of the identity $e$ in $G$, there exists a continuous homomorphism $\pi_U$ of $G$ onto a second-countable Hausdorff topological group $H_U$ such that $\pi_U^{-1}(V)\subseteq U$, for some open neighborhood $V$ of the identity in $H_U$. Let $\mathcal{P}$ be a family of open neighborhoods of $e$ in $G$ such that $\{e\}=\bigcap\mathcal{P}$ and $|\mathcal{P}|\leq\cont$ (we use the fact that $\psi(G)\leq\cont$). Let also $\mathcal{Q}$ be a family of open neighborhoods of $e$ in $G$ such that $|\mathcal{Q}|\leq\cont$ and $\{W\cap N: W\in\mathcal{Q}\}$ is a local base for $N$ at $e$ (here we use the inequality $w(N)\leq\cont$). Then the cardinality of the family $\mathcal{R}=\mathcal{P}\cup\mathcal{Q}$ is not greater than $\cont$. For every element $U\in\mathcal{R}$, we take a continuous homomorphism $\pi_U$ of $G$ onto a second countable topological group $H_U$ as above. Further, since the space $G/H$ is locally compact, there exists an open neighborhood $U_0$ of $e$ in $G$ such that the closure of $\varphi_G(U_0)$ in $G/H$ is compact, where $\varphi_G\colon G\to G/H$ is the quotient mapping. Take a continuous homomorphism $p\colon G\to H_0$ to a second countable topological group $H_0$ such that $p^{-1}(V_0)\subset U_0$, for some open neighborhood $V_0$ of $p(e)$ in $H_0$.

Let $\pi$ be the diagonal product of the family $\{\pi_U: U\in\mathcal{R}\}\cup\{p\}$. Then $\pi$ is a continuous homomorphism of $G$ to the product $P=H_0\times\prod_{U\in\mathcal{R}} H_U$ of second countable Hausdorff topological groups. It follows from our choice of $\mathcal{P}$ and the inclusion $\mathcal{P}\subseteq\mathcal{R}$ that $\pi$ is a monomorphism. Since $|\mathcal{R}|\leq\cont$, the group $P$ and its subgroup $H=\pi(G)$ have weight at most $\cont$. Denote by $p_0$ the projection of $P$ onto the factor $H_0$ and let $W_0=H\cap p_0^{-1}(V_0)$. Then $W_0$ is an open neighborhood of the identity in $H$ and since $p_0\circ\pi=p$, we see that $\pi^{-1}(W_0)=p^{-1}(V_0)\subset U_0$. 

Let us verify that $\pi(N)$ is closed in $H$. It follows from our choice of the family $\mathcal{Q}$ and the inclusion $\mathcal{Q}\subseteq\mathcal{R}$ that the restriction of $\pi$ to $N$ is a topological isomorphism of $N$ onto its image $\pi(N)$. Since the pro-Lie group $N$ is complete, so is the subgroup $K=\pi(N)$ of $H$. Hence $K$ is closed in $H$. In particular, the quotient space $H/K$ is Hausdorff. This proves our claim.

Let $\varphi_G\colon G\to G/N$ and $\varphi_H\colon H\to H/K$ be the canonical quotient mappings onto the left coset spaces $G/N$ and $H/K$, respectively. We define a mapping $i\colon G/N\to H/K$ by letting $i(\varphi_G(x))=\varphi_H(\pi(x))$, for each $x\in G$. Since $K=\pi(N)$, this definition is correct. It follows from our definition of $i$ that the diagram below commutes.
\[
\xymatrix{G\ar@{>}[r]^{\varphi_G\,\,}\ar@{>}[d]_{\pi} &G/N
\ar@{>}[d]^{i}\\
H \ar@{>}[r]^{\varphi_H\,\,} & H/K }
\]
Since $\pi$ is algebraically an isomorphism and $K=\pi(N)$, we see that $i$ is a bijection. The continuity of the mapping $i$ follows from the facts that $\pi$ and $\varphi_H$ are continuous, while $\varphi_G$ is open and continuous. 

The set $\varphi_H(W_0)$ is an open neighborhood of the identity in $H/K$ and the closure of $\varphi_H(W_0)$ is compact, i.e. the space $H/K$ is locally compact. Indeed, it follows from $\pi^{-1}(W_0)\subset U_0$ that $\varphi_H(W_0)\subset i(\varphi_G(U_0))\subset i(\overline{\varphi_G(U_0)})$. Since $\overline{\varphi_G(U_0)}$ is compact, so are the sets $i(\overline{\varphi_G(U_0)})$ and $\overline{\varphi_H(W_0)}$. We have thus proved that the homomorphism $\pi\colon G\to G/H$ satisfies (i)--(iii).

The group $H$ is $\omega$-narrow as a continuous homomorphic image of the $\omega$-narrow group $G$. Hence $H$ can be covered by countably many translations of the open set $W_0$. Since the set $\overline{\varphi_H(W_0)}$ is compact, it follows that the space $H/K$ is $\sigma$-compact.
Similarly, since the group $G$ is $\omega$-narrow and the set $\overline{\varphi_G(U_0)}$ is compact, the space $G/N$ is also $\sigma$-compact. Therefore, both spaces $G/N$ and $H/K$ are locally compact, $\sigma$-compact, and homogeneous.

Finally, it follows from Lemma~\ref{Le:aux} that the bijection $i\colon G/N\to H/K$ is a homeomorphism. It is clear that $w(H/K)\leq w(H)\leq\cont$, so we conclude that $w(G/N)=w(H/K)\leq\cont$. Hence Lemma~\ref{Le:PonGen} implies that the space $G/N$ is separable. Since the subgroup $N$ of $G$ is also separable, the separability of $G$ follows from Remark~\ref{Remark 8}.
\end{proof}

In the sequel we consider embeddings into separable topological groups. As one can expect, the situation improves notably when compared to embeddings into separable Hausdorff spaces.
 
Let us recall that a topological group $G$ is called \textit{feathered} if it contains a nonempty compact subset with a countable neighborhood base in $G$. Equivalently, $G$ is feathered if it contains a compact subgroup $K$ such that the quotient space $G/K$ is metrizable (see \cite[Section~4.3]{AT}). All metrizable groups and all locally compact groups are feathered. Notice that the class of feathered groups is countably productive according to \cite[Proposition~4.3.13]{AT}. 
 
\begin{theorem}\label{Th:Fea}  
Let a feathered topological group $G$ be a subgroup of a separable topological group. Then $G$ is separable. 
\end{theorem}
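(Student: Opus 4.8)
The plan is to exploit the structural characterisation of feathered groups together with the fact (Remark~\ref{Remark 8}) that separability is a three space property. Since $G$ is feathered, it contains a compact subgroup $K$ for which the left coset space $G/K$ is metrizable. As $K$ is compact, it is closed in $G$, so by Remark~\ref{Remark 8} it suffices to prove that both $K$ and $G/K$ are separable.

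The separability of $K$ is immediate. Let $H$ be the separable topological group containing $G$. Then $K$ is a compact group which is a subspace of the separable Hausdorff space $H$, so Corollary~\ref{Corollary 12} applies verbatim and yields that $K$ is separable.

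The bulk of the argument is to show that the coset space $G/K$ is separable. First I would note that $G$ is $\omega$-narrow: $H$ is separable and therefore $\omega$-narrow, and $\omega$-narrowness is inherited by the subgroup $G$. The subtle point is that $K$ need not be normal, so $G/K$ is merely a homogeneous space and one cannot simply invoke ``metrizable plus $\omega$-narrow implies second countable'' for groups; instead I would use the transitive action of $G$ on $G/K$ by homeomorphisms, $L_g\colon hK\mapsto ghK$. Fix the base point $x_0=eK$ and, using metrizability, a countable local base $\{V_n:n\in\omega\}$ at $x_0$. By continuity of the quotient map $\pi\colon G\to G/K$ at $e$, choose for each $n$ a symmetric open neighborhood $U_n$ of the identity in $G$ with $\pi(U_n)\subseteq V_n$, and by $\omega$-narrowness a countable set $C_n\subseteq G$ with $G=C_nU_n$. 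I claim that the countable set $D=\bigcup_{n\in\omega}\pi(C_n)$ is dense in $G/K$. Indeed, given $gK\in G/K$ and a neighborhood $O$ of $gK$, the set $L_{g^{-1}}(O)$ is a neighborhood of $x_0$, so $gV_n\subseteq O$ for some $n$; writing $g=cu$ with $c\in C_n$ and $u\in U_n$ and using that $U_n$ is symmetric, one gets $cK=g(u^{-1}K)\in g\,\pi(U_n)\subseteq gV_n\subseteq O$, so $cK\in D\cap O$. Hence $G/K$ is separable.

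With $K$ and $G/K$ both separable and $K$ a closed subgroup of $G$, Remark~\ref{Remark 8} delivers the separability of $G$. I expect the main obstacle to be precisely the density computation for $G/K$: since $K$ is a priori non-normal, the argument must be carried out in the coset space using its translation action rather than in a quotient group, and the choice of symmetric neighborhoods $U_n$ is what converts the covering relation $g=cu$ into genuine closeness of $cK$ to $gK$.
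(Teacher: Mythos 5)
Your proof is correct, and it shares the paper's overall skeleton --- choose a compact subgroup $K$ with $G/K$ metrizable, prove $K$ and $G/K$ separable, then apply Remark~\ref{Remark 8} (which, as you implicitly use, is stated for coset spaces and hence does not need $K$ to be normal) --- but it handles the coset space by a genuinely different and more elementary route. The paper's proof invokes \cite[4.3.A]{AT} to conclude that the $\omega$-narrow feathered group $G$ is Lindel\"of, whence $G/K$ is Lindel\"of and metrizable, hence separable; this leans on a non-trivial external theorem but yields the Lindel\"of property of $G$ itself as a by-product. You instead prove directly that a metrizable (in fact, merely first countable) coset space of an $\omega$-narrow group is separable, and your density computation is sound: given $gK\in O$, translating by $L_{g^{-1}}$ produces $n$ with $gV_n\subseteq O$, and writing $g=cu$ with $c\in C_n$ and $u\in U_n$ symmetric gives $cK=gu^{-1}K\in g\,\pi(U_n)\subseteq gV_n\subseteq O$, so the countable set $\bigcup_{n\in\omega}\pi(C_n)$ is dense. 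This uses nothing beyond the definitions of $\omega$-narrowness, first countability of $G/K$, and the continuity of the quotient map, whereas your treatment of $K$ (compact, hence closed, and separable by Corollary~\ref{Corollary 12} since it sits in a separable Hausdorff space) coincides with the paper's, which derives it from Lemma~\ref{Le:Dya} together with Theorem~\ref{Theorem 3}. In short, the paper's route is shorter on the page but imports heavier machinery; yours is self-contained and isolates a reusable fact --- first countable coset spaces of $\omega$-narrow groups are separable --- at the cost of not obtaining the Lindel\"of property of $G$ along the way.
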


\begin{proof}
Assume that $G$ is a subgroup of a separable topological group $X$. By \cite[Corollary~3.4.8]{AT}, the group $X$ is $\omega$-narrow. Hence, according to \cite[Theorem~3.4.4]{AT}, the subgroup $G$ of $X$ is also $\omega$-narrow. Applying \cite[4.3.A]{AT}, we deduce that $G$ is Lindel\"of. Take a compact subgroup $K$ of $G$ such that the quotient space $G/K$ is metrizable. Note that the space $G/K$ is Lindel\"of as a continuous image of the Lindel\"of space $G$. Hence $G/K$ is separable.

Finally, the compact  group $K$ is separable by Lemma~\ref{Le:Dya}. Hence the separability of $G$ follows from Remark~\ref{Remark 8}.
\end{proof} 
 
Since all locally compact groups and all metrizable groups are feathered, the following two corollaries are immediate from Theorem~\ref{Th:Fea}; the second of them is well known.  
 
\begin{corollary}\label{Theorem 14}  
If a locally compact topological group $G$ is a subgroup of a separable topological group, then $G$ is separable. 
\end{corollary}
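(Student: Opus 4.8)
The plan is to deduce the statement directly from Theorem~\ref{Th:Fea}. The only additional input required is the fact, recorded just before that theorem, that every locally compact group is feathered: a locally compact group contains a nonempty compact set with a countable neighbourhood base (equivalently, a compact subgroup $K$ for which the quotient $G/K$ is metrizable). Granting this, the corollary is immediate — if the locally compact group $G$ sits as a subgroup of a separable topological group, then $G$ is a feathered subgroup of a separable group, and Theorem~\ref{Th:Fea} yields its separability with no further argument.

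For completeness I would also record the self-contained route that specializes the proof of Theorem~\ref{Th:Fea} to the locally compact setting. First, a separable group is $\omega$-narrow by \cite[Corollary~3.4.8]{AT}, and $\omega$-narrowness passes to subgroups, so $G$ is $\omega$-narrow. A locally compact $\omega$-narrow group is $\sigma$-compact: covering a fixed compact neighbourhood of the identity by countably many translates exhibits $G$ as a countable union of compact sets, and in particular $G$ is Lindel\"of. Choosing a compact subgroup $K$ with $G/K$ metrizable, the quotient $G/K$ is a continuous image of the Lindel\"of group $G$, hence Lindel\"of and therefore separable, while $K$, being a compact subgroup of a separable group, satisfies $w(K)\leq\cont$ by Lemma~\ref{Le:Dya} and is thus separable by Theorem~\ref{Theorem 3}. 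Separability as a three-space property (Remark~\ref{Remark 8}) then gives the separability of $G$.

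There is no genuine obstacle here: the corollary is formal once one accepts that locally compact groups are feathered, and the substantive work has already been carried out in Theorem~\ref{Th:Fea}. The only point demanding a little care is the verification that locally compact groups lie in the feathered class, but this is a standard structural fact (see \cite[Section~4.3]{AT}) rather than something to be reproved here.
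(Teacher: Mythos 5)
Your proposal is correct and follows exactly the paper's route: the corollary is stated there as immediate from Theorem~\ref{Th:Fea} together with the standard fact that locally compact groups are feathered. Your supplementary self-contained argument (using $\omega$-narrowness, $\sigma$-compactness, Lemma~\ref{Le:Dya}, Theorem~\ref{Theorem 3}, and Remark~\ref{Remark 8}) is a sound specialization of the proof of Theorem~\ref{Th:Fea} to the locally compact case, but is not needed.
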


\begin{corollary}\label{Cor:Met}  
If a metrizable group $G$ is a subgroup of a separable topological group, then $G$ is separable. 
\end{corollary}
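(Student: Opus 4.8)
The plan is to reduce the statement to Theorem~\ref{Th:Fea} by observing that every metrizable topological group is feathered. First I would recall the notion of feathered group as used in the excerpt: $G$ is feathered if it contains a compact subgroup $K$ such that the quotient space $G/K$ is metrizable. For a metrizable group $G$ one may simply take $K=\{e\}$, the trivial (compact) subgroup; then $G/K$ is topologically isomorphic to $G$ itself and is therefore metrizable. Hence $G$ is feathered, and since by hypothesis $G$ is a subgroup of a separable topological group, Theorem~\ref{Th:Fea} applies directly and yields the separability of $G$.

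Alternatively, and perhaps more transparently, I would give a self-contained argument bypassing the feathered machinery. Let $X$ be a separable topological group with $G$ a subgroup of $X$. By \cite[Corollary~3.4.8]{AT} the group $X$ is $\omega$-narrow, and by \cite[Theorem~3.4.4]{AT} the subgroup $G$ inherits $\omega$-narrowness. Since $G$ is metrizable it has a countable local base $\{U_n:n\in\omega\}$ at the identity $e$. For each $n$, $\omega$-narrowness furnishes a countable set $C_n\subseteq G$ with $G=C_nU_n$. I claim that $D=\bigcup_{n\in\omega}C_n$ is a countable dense subset of $G$: given $g\in G$ and a neighborhood of $g$, choose $n$ so that $gU_n^{-1}$ is contained in it (possible because $\{U_n^{-1}:n\in\omega\}$ is again a local base at $e$); writing $g=cu$ with $c\in C_n$ and $u\in U_n$ gives $c=gu^{-1}\in gU_n^{-1}$, so $D$ meets the chosen neighborhood. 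Thus $G$ is separable.

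I do not expect a genuine obstacle here: the real content was already concentrated in Theorem~\ref{Th:Fea}, and the only point to verify for the first route is the elementary observation that metrizable groups are feathered (or, in the direct route, that an $\omega$-narrow metrizable group is separable). Both arguments are short and classical, which is consistent with the remark in the text that this corollary is well known.
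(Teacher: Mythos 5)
Your first route is exactly the paper's own proof: the corollary is stated there as immediate from Theorem~\ref{Th:Fea} precisely because every metrizable group is feathered, which is your observation that one may take $K=\{e\}$ so that $G/K\cong G$ is metrizable. Your alternative direct argument is also correct, and it is worth noting that it essentially coincides with the remark the paper itself makes right after the corollary, where the hypothesis is weakened to $G$ being a subgroup of a group of countable cellularity: there the authors likewise pass to $\omega$-narrowness of $G$ and invoke the standard fact that an $\omega$-narrow first-countable group has a countable base. Your hands-on verification of that fact is sound\,---\,the sets $gU_n^{-1}$ form a local base at $g$ because inversion and left translation are homeomorphisms, so $D=\bigcup_{n\in\omega}C_n$ meets every neighborhood of every point. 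What the direct route buys is a self-contained proof avoiding Lemma~\ref{Le:Dya} and the Lindel\"of argument inside Theorem~\ref{Th:Fea}, and in fact a stronger conclusion (second countability of $G$, not merely separability); what the feathered route buys is uniformity, since the same theorem simultaneously covers the locally compact case of Corollary~\ref{Theorem 14}. No gaps in either argument.
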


In fact, the conclusion of Corollary~\ref{Cor:Met} remains valid if $G$ is a subgroup of a topological group $X$ with countable cellularity. Indeed, the group $X$ is $\omega$-narrow by \cite[Theorem~3.4.7]{AT}, and so is its subgroup $G$. Since $G$ is first countable, it follows from \cite[Proposition~3.4.5]{AT} that $G$ has a countable base and hence is separable.

%%%%%%%%%%%%%%%%%%%%%%%%%%%%%%%%%%%%%%%%%%%%%%%%
\begin{remark}\label{Re:Ark}
{\rm 1) A discrete (hence locally compact and metrizable) group $G$ homeomorphic to a closed subspace of a separable Tychonoff space is not necessarily separable. Indeed, it suffices to consider the Niemytzki plane which contains a discrete copy of the real numbers, the $x$-axis. Therefore, Theorem~\ref{Th:Fea} and Corollaries~\ref{Theorem 14} and~\ref{Cor:Met} would not be valid if the group $G$ were assumed to be a subspace of a separable Hausdorff (or even Tychonoff) space. Neither is Corollary~\ref{Theorem 14} valid if $G$ is assumed only to be a pro-Lie group. We will present in Corollary~\ref{Cor:subg} an example of a separable prodiscrete abelian group which contains a closed non-separable subgroup.\smallskip

2) The separable connected pro-Lie group $G=\R^\cont$ contains a closed non-separable subgroup. To see this, we consider the closed subgroup $\Z^\cont$ of $G$. By a theorem of Uspenskij \cite{Usp3}, the group $\Z^\cont$ contains a subgroup $H$ of uncountable cellularity. The closure of $H$ in $G$, say, $K$ is a closed non-separable subgroup of $G$. By Proposition~\ref{Le:pro-L}, the group $K$ cannot be connected.}
\end{remark}

%%%%%%%%%%%%%%%%%%%%%%%%%%%%%%%%%%%%%%%%%%%%%%

A natural question, after Proposition~\ref{Le:pro-L} and Corollary~\ref{Cor:Met}, is whether a connected metrizable group must be separable
if it is a subspace of a separable Hausdorff (or regular) space. Again the answer is \lq{No\rq}. Indeed, consider an arbitrary connected metrizable group $G$ of weight $\cont$. For example, one can take $G=C(X)$, the Banach space of continuous real-valued functions on a compact space $X$ satisfying $w(X)=\cont$, endowed with the sup-norm topology. Since $w(G)=\cont$, the space $G$ is homeomorphic to a subspace of the Tychonoff cube $I^\cont$, where $I=[0,1]$ is the closed unit interval. Thus $G$ embeds as a subspace in a separable regular space, but both the density and weight of $G$ are equal to $\cont$.

%%%%%%%%%%%%%%%%%%%%%%%%%%%%%%%%%%%%%%%%%%%%%%%%%%%%%%%%%%%
\section{Embedding theorems}\label{Sec2}
The following result shows that there is a wealth of separable pseudocompact
topological abelian groups with closed non-separable subgroups. Later,
in Theorem~\ref{Th:nA}, we will present a general version of this result, 
without restricting ourselves to the abelian case.

\begin{theorem}\label{Th:ab}
Every precompact abelian group of weight $\leq\cont$ is topologically
isomorphic to a closed subgroup of a separable, connected, pseudocompact 
abelian group $H$ of weight $\leq\cont$.
\end{theorem}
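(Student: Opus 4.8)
```latex
\textbf{Proof proposal.} The plan is to realize an arbitrary precompact abelian group $G$ with $w(G)\le\cont$ as a closed subgroup of a separable pseudocompact connected abelian group constructed explicitly inside a power of the torus $\T=\R/\Z$.

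First I would use that $G$ precompact abelian means $G$ embeds as a (topologically isomorphic) subgroup of a compact abelian group, namely its Raikov completion $\widehat G$, and that $w(\widehat G)=w(G)\le\cont$. By Pontryagin duality a compact abelian group of weight $\le\cont$ embeds as a closed subgroup of $\T^\cont$; equivalently, the dual $\widehat G^{\,\vee}$ is a discrete abelian group of cardinality $\le\cont$, which injects into the divisible hull / a suitable $\Z^{(\cont)}$, dualizing to a closed embedding $\widehat G\hookrightarrow \T^\cont$. So without loss of generality $G$ is a subgroup of $\T^\cont$ with $\overline G$ compact of weight $\le\cont$. The target group $H$ will be built so that it sits between $G$ and a dense pseudocompact subgroup of $\T^\cont$.

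The key construction is to produce inside $\T^\cont$ a \emph{dense}, \emph{pseudocompact}, \emph{separable} subgroup $D$ containing $\overline G$, and then set $H=D$ (or $H=\overline G\cdot D$). Density and pseudocompactness in a power $\T^\cont$ are controlled by the classical theory of pseudocompact subgroups: a dense subgroup of a compact group is pseudocompact iff it meets every nonempty $G_\delta$-subset of the compact group (equivalently, it is $G_\delta$-dense). I would build $D$ by a transfinite recursion of length $\cont$, at each stage adjoining countably many points to kill one $G_\delta$-set while keeping the running subgroup of cardinality $\le\cont$; since there are $\cont^\omega=\cont$ many $G_\delta$-sets determined by countable sets of coordinates, the bookkeeping closes. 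To secure separability of the final group, I would start the recursion from a fixed countable subset $S$ of $\T^\cont$ that is dense in the whole product — such $S$ exists because $\T^\cont$ itself is separable by Theorem~\ref{HMP}, the Hewitt--Marczewski--Pondiczery theorem, as $\cont\le 2^\omega$ — and ensure $D$ contains the subgroup generated by $S$; then $D$ is separable. Connectedness can be arranged by passing to the connected group $\T^\cont$ throughout (every dense subgroup of the connected group $\T^\cont$ is itself connected, since $\T^\cont$ is the closure), or by first embedding $\widehat G$ into a connected compact group and dualizing from a torsion-free discrete dual. Finally $\overline G$ is compact, hence a closed (even $G_\delta$-closed) subgroup of $\T^\cont$, so its intersection with $D$, which contains $G$, is closed in $D$; arranging $\overline G\subseteq D$ makes $G$ itself closed in $H=D$ as required, and $w(H)\le w(\T^\cont)=\cont$.

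The main obstacle I anticipate is simultaneously enforcing all four properties in one recursion: keeping the subgroup separable (retaining the fixed dense countable core $S$), pseudocompact ($G_\delta$-dense — each $G_\delta$ determined by countably many coordinates must be hit), connected, and closed-containing $\overline G$, all while the cardinality stays $\le\cont$. The delicate point is that adjoining points to achieve $G_\delta$-density must not destroy the closedness of $G$ in $H$; this is handled by ensuring the new points are taken \emph{outside} $\overline G$ except where forced, and by exploiting that $\overline G$, being a $G_\delta$-closed compact subgroup, is closed in any subgroup $D$ it is contained in. Since the abelian hypothesis makes the dual-group machinery available, all embeddings and the connectedness adjustments reduce to statements about countable discrete abelian duals, which is precisely why this case is \lq\lq considerably easier\rq\rq\ than the general Theorem~\ref{Th:nA}.
```
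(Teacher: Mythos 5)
There is a genuine gap, and it is at the decisive step: closedness of $G$ in $H$. Your argument establishes only that $\overline{G}\cap D$ is closed in $D$, and with $\overline{G}\subseteq D$ this intersection is $\overline{G}$, not $G$. Since $G$ is precompact, its closure $\overline{G}$ in $\mathbb{T}^\cont$ is its Ra\u{\i}kov completion and $G$ is \emph{dense} in $\overline{G}$; so unless $G$ was compact to begin with, $G$ is a proper dense (hence non-closed) subgroup of $H=D$. Choosing the adjoined points ``outside $\overline{G}$'' cannot repair this: the failure occurs inside $\overline{G}$ itself. Nor can you retreat to arranging $D\cap\overline{G}=G$ instead: take $G$ to be a dense non-pseudocompact (or non-separable) subgroup of $\mathbb{T}^\cont$; then $\overline{G}=\mathbb{T}^\cont$, so $D\cap\overline{G}=D$ and you would need $D=G$, i.e. $G$ itself would have to be separable and pseudocompact, which fails in general. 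Thus the entire one-power strategy --- build a $G_\delta$-dense separable subgroup of $\mathbb{T}^\cont$ containing $\overline{G}$ --- cannot close; the closedness of $G$ must be witnessed by a closed set \emph{other} than $\overline{G}$. This is precisely the idea you are missing and the paper supplies: it passes to $\Pi^\cont\cong\mathbb{T}^{\cont\times\cont}$, places $G$ on the diagonal subgroup $\Delta$ (closed in the big compact group and meeting the $\Sigma$-product trivially), and sets $H=\widetilde{G}+\Sigma+\hull{x_0}$, where $x_0$ has coordinates forming an independent family of elements of infinite order in $\mathbb{T}$, so that $\hull{x_0}$ is a countable dense subgroup. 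The crux is then the verification $H\cap\Delta=\widetilde{G}$, which rests on the Claim that $\pi_\alpha(x_0^n)\neq\pi_\beta(x_0^n)$ for $\alpha\neq\beta$ and $n\neq 0$; closedness of $\Delta$ in $\Pi^\cont$ then yields that $\widetilde{G}\cong G$ is closed in $H$ even though $G$ is nowhere near closed in the ambient compact group.

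Two further points. First, your justification of connectedness --- ``every dense subgroup of the connected group $\mathbb{T}^\cont$ is itself connected, since $\mathbb{T}^\cont$ is the closure'' --- is false: a countable dense subgroup of $\mathbb{T}$ is totally disconnected. What is true, and what the paper uses, is that a $G_\delta$-dense subgroup of a product of compact connected metrizable groups is connected \cite[Theorem~2.4.15]{AT}, so connectedness of $H$ comes from its pseudocompact density, not from density alone. Second, your transfinite recursion to achieve $G_\delta$-density is unnecessary: the $\Sigma$-product is already a dense countably compact (hence pseudocompact) subgroup of the full power \cite[Corollary~1.6.34]{AT}, and adding it, together with a fixed countable dense subgroup, does the whole job in one stroke --- these two ingredients of your proposal do match the paper's proof, but they are the easy part; without the diagonal device the theorem as stated is not proved.
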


\begin{proof}
Let $G$ be a precompact abelian topological group satisfying $w(G)\leq\cont$. 
Denote by $\varrho{G}$ the Ra\u{\i}kov completion of $G$. Then $\varrho{G}$
is a compact abelian topological group. Since $G$ is dense in $\varrho{G}$,
we see that $\chi(\varrho{G})=\chi(G)$. Further, the character and weight 
of every precompact (more generally, $\omega$-narrow) topological group 
coincide (see \cite[Corollary~5.2.4]{AT}). Therefore, $w(\varrho{G})=w(G)\leq\cont$.

Since $\varrho{G}$ is a compact abelian topological group of weight 
$\leq\cont$, it is topologically isomorphic to a subgroup of $\Pi=\mathbb{T}^\cont$,
where $\mathbb{T}$ is the circle group with its usual topology inherited
from the complex plane. In particular, we can identify $G$ with a subgroup
of $\Pi$. We will construct the required group $H$ as a dense subgroup of 
$\Pi^\cont$.

For every $\alpha<\cont$, let $\pi_\alpha$ be the projection of $\Pi^\cont$
to the $\alpha$th factor $\Pi_{(\alpha)}$. Let 
$$
\Delta=\{x\in \Pi^\cont: \pi_\alpha(x)=\pi_\beta(x) \mbox{ for all } 
\alpha,\beta\in\cont \}
$$
be the diagonal in $\Pi^\cont$. It is clear that $\Delta$ is a closed
subgroup of $\Pi^\cont$. Further, we can identify $G$ with the subgroup
$$
\widetilde{G}=\{x\in\Delta: \pi_0(x)\in G\}
$$
of $\Delta$. To define the subgroup $H$ of $\Pi^\cont$ we need some
preliminary work.

For every element $x\in\Pi^\cont$, let
$$
\supp(x)=\{\alpha\in\cont: \pi_\alpha(x)\neq e_\Pi\},
$$
where $e_\Pi$ is the identity element of $\Pi$. Then 
$$
\Sigma=\{x\in\Pi^\cont: |\supp(x)|\leq\omega\}
$$
is a dense countably compact subgroup of $\Pi^\cont$ \cite[Corollary~1.6.34]{AT}. 
Hence $\Sigma$ is pseudocompact. 

It is clear that $\Pi^\cont\cong (\mathbb{T}^\cont)^\cont\cong 
\mathbb{T}^{\cont\times\cont}$. We define an element 
$x_0\in\mathbb{T}^{\cont\times\cont}$ as follows. 
Let $\{t_p: p\in\cont\times\cont\}$ be an independent system of
elements of $\mathbb{T}$ having infinite order. In other words, if 
$p_1,\ldots,p_k$ are pairwise distinct elements of $\cont\times\cont$ 
and $n_1,\ldots,n_k$ are arbitrary integers, then the equality
$t_{p_1}^{n_1}\cdots t_{p_k}^{n_k}=1$ in $\mathbb{T}$ implies that
$n_1=\cdots=n_k=0$. Take an element $x_0\in\mathbb{T}^{\cont\times\cont}$
such that $x_0(p)=t_p$ for each $p=(\alpha,\beta)\in\cont\times\cont$. Then
the cyclic subgroup $\hull{x_0}$ of $\Pi^\cont\cong\mathbb{T}^{\cont\times\cont}$
generated by $x_0$ is dense in $\Pi^\cont$ (see the proof of Corollary~25.15 in
\cite{HR}).

Finally we put $H=\widetilde{G}+\Sigma+\hull{x_0}$. Then $H$ is dense 
in $\Pi^\cont$ and separable since $H$ contains the countable dense 
subgroup $\hull{x_0}$ of $\Pi^\cont$. It is also clear that $H$ is
pseudocompact since it contains the dense pseudocompact subgroup 
$\Sigma$. By \cite[Theorem~2.4.15]{AT}, the subgroup $H$ of $\Pi^\cont
\cong\mathbb{T}^{\cont\times\cont}$ is connected. It remains to verify that 
\begin{equation}\label{Eq:1}
H\cap \Delta=\widetilde{G}.
\end{equation}
Indeed, since $\Delta$ is closed in $\Pi^\cont$, it follows from (\ref{Eq:1})
that $\widetilde{G}\cong G$ is closed in $H$. To verify (\ref{Eq:1}), we 
need the following fact:\medskip

\noindent
{\bf Claim.} \textit{If $\alpha<\beta<\cont$ and $n\neq 0$ is an integer,
then $\pi_\alpha(x_0^n)\neq \pi_\beta(x_0^n)$}.\medskip 

We shall now use proof by contradiction. Suppose that $\pi_\alpha(x_0^n)=\pi_\beta(x_0^n)$ 
for distinct $\alpha,\beta\in\cont$ and an integer $n\neq 0$. Let
$J=\{\alpha,\beta\}\subset \cont$ and denote by $\pi_J$ the projection
of $\Pi^\cont$ onto $\Pi^J$. Since $\hull{x_0}$ is dense in $\Pi^\cont$,
the cyclic subgroup $\hull{y_0}$ of $\Pi^J$ is dense $\Pi^J$, where 
$y_0=\pi_J(x_0)$. It follows from $\pi_\alpha(x_0^n)=\pi_\beta(x_0^n)$
that the element $z_0=(\pi_\alpha(x_0))^{-1}\cdot\pi_\beta(x_0)$ of 
$\Pi$ satisfies $z_0^n=e_\Pi$. Denote by $p$ the canonical homomorphism
of $\Pi$ onto $\Pi/\hull{z_0}$. Let $p^2$ be the canonical homomorphism
of $\Pi^J$ onto $\Pi/\hull{z_0}\times\Pi/\hull{z_0}$. Since $\pi_\beta(x_0)=
z_0\cdot\pi_\alpha(x_0)$, we see that $p(\pi_\alpha(x_0))=p(\pi_\beta(x_0))$.
Hence the coordinates of the element $p^2(y_0)=(p(\pi_\alpha(x_0)),p(\pi_\beta(x_0)))$ 
coincide and the diagonal of $\Pi/\hull{z_0}\times\Pi/\hull{z_0}$ contains
the group $p^2(\hull{y_0})$. Since $\hull{y_0}$ is dense in $\Pi^J$ and
the homomorphism $p^2$ is continuous, it follows that the diagonal in 
$\Pi/\hull{z_0}\times\Pi/\hull{z_0}$ is dense in $\Pi/\hull{z_0}\times\Pi/\hull{z_0}$.
This contraction finishes the proof of the Claim. 

Let us turn back to the proof of the equality (\ref{Eq:1}). Take $g\in\widetilde{G}$, 
$s\in\Sigma$, and $n\in\mathbb{Z}$ such that $d=g\cdot s\cdot x_0^n\in\Delta$. 
Since $g\in\Delta$, it follows that $d\cdot g^{-1}=s\cdot x_0^n\in\Delta$.
The element $s\in\Sigma$ has al most countably many coordinates distinct
from $e_\Pi$, so the last equality implies that all the coordinates of
the element $x_0^n$, except for countably many of them, coincide. 
By the above Claim, this is possible only if $n=0$. Since 
$s=s\cdot x_0^n\in\Delta$, we see that $s\in\Sigma\cap\Delta$, so
$s$ is the identity element of $\Pi^\cont$ and $d=g\in\widetilde{G}$.
This proves (\ref{Eq:1}) and completes our argument.
\end{proof}

Theorem~\ref{Th:ab} remains valid in the non-abelian case, but our 
argument becomes considerably more complicated. First we need an 
auxiliary lemma.

\begin{lemma}\label{Le:tec2}
There exists a sequence $\{\varphi_m: m\in\omega\}$ of mappings of 
$\omega$ to $\omega$ satisfying the following condition: For every
integer $k\geq 1$ and every vector triple $(\overline{m},\overline{\jmath}_1,
\overline{\jmath}_2)$, where $\overline{m}=(m_1,\ldots,m_k)$, $\overline{\jmath}_1
=(j_{1,1},\ldots,j_{k,1})$, and $\overline{\jmath}_2=(j_{1,2},\ldots,j_{k,2})$ 
are elements of $\omega^k$ and $m_1,\ldots,m_k$ are pairwise 
distinct, there exists $n\in\omega$ such that $\varphi_{m_i}(n)=j_{i,1}$ 
and $\varphi_{m_i}(n+1)=j_{i,2}$ for each $i$ with $1\leq i\leq k$.
\end{lemma}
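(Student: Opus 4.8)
The plan is to construct the sequence $\{\varphi_m:m\in\omega\}$ by a direct bookkeeping argument, exploiting the fact that the totality of constraints appearing in the statement is countable. For each fixed $k\geq 1$, the set of admissible triples $(\overline{m},\overline{\jmath}_1,\overline{\jmath}_2)\in\omega^k\times\omega^k\times\omega^k$ with $m_1,\dots,m_k$ pairwise distinct is countable, and the union over all $k\geq 1$ is therefore countable as well. I would fix an enumeration $\{T_s:s\in\omega\}$ of this union, writing $T_s=(\overline{m}^{(s)},\overline{\jmath}^{(s)}_1,\overline{\jmath}^{(s)}_2)$ with $\overline{m}^{(s)}=(m^{(s)}_1,\dots,m^{(s)}_{k_s})$ and correspondingly for the two value-vectors.

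Next I would assign to each constraint $T_s$ the pair of consecutive positions $n_s=2s$ and $n_s+1=2s+1$. The point of this choice is that the pairs $\{2s,2s+1\}$ are pairwise disjoint as $s$ ranges over $\omega$, so the two-position requirements of distinct constraints can never overlap. I then define the functions position by position: for each $s$ and each $i$ with $1\leq i\leq k_s$, set $\varphi_{m^{(s)}_i}(2s)=j^{(s)}_{i,1}$ and $\varphi_{m^{(s)}_i}(2s+1)=j^{(s)}_{i,2}$; for every remaining pair $(m,p)\in\omega\times\omega$ not assigned a value by this rule, set $\varphi_m(p)=0$.

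The final step is to verify that each $\varphi_m$ is a well-defined map $\omega\to\omega$ and that the required property holds. Well-definedness is exactly where the disjointness is used: each position $p\in\omega$ lies in at most one block $\{2s,2s+1\}$, so the value $\varphi_m(p)$ is prescribed by at most one constraint $T_s$ (precisely when $m$ occurs in $\overline{m}^{(s)}$) and equals $0$ otherwise; in either case it is uniquely determined and lies in $\omega$. For the property itself, given any $k\geq 1$ and any admissible triple $(\overline{m},\overline{\jmath}_1,\overline{\jmath}_2)$, this triple coincides with $T_s$ for some $s$, and then $n=2s$ satisfies $\varphi_{m_i}(n)=j_{i,1}$ and $\varphi_{m_i}(n+1)=j_{i,2}$ for all $i$ by construction.

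The only genuine subtlety, and the step I would watch most carefully, is the interaction between the consecutive-position demand (the values at $n$ and $n+1$ are prescribed \emph{simultaneously}) and the requirement that each $\varphi_m$ stay single-valued across all constraints in which $m$ participates. This is precisely what the reservation $n_s=2s$ dispatches: since the successive blocks occupy $\{0,1\},\{2,3\},\{4,5\},\dots$, no two constraints ever compete for the value of a single $\varphi_m$ at a common position, so no conflict can occur. Everything else is routine.
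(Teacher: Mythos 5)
Your proof is correct and takes essentially the same approach as the paper's: enumerate the countably many admissible triples and reserve a distinct pair of consecutive positions for each one (your blocks $\{2s,2s+1\}$ versus the paper's $\{2n+1,2n+2\}$), so that no two constraints ever compete for the value of any $\varphi_m$ at a common position, while pairwise distinctness of $m_1,\ldots,m_k$ within a single triple guarantees consistency inside each block. The paper packages this as an inductive construction with an enumeration arranged so that the indices occurring in triple $n$ are at most $n$, whereas your one-shot global definition with default value $0$ achieves the same end slightly more directly.
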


\begin{proof}
Let us enumerate the family of all triples $(\overline{m},
\overline{\jmath}_1,\overline{\jmath}_2)$ as in the lemma in such a way 
that if a triple has number $n$ and its first entry is $\overline{m}=
(m_1,\ldots,m_k)$, then $m_1\ldots,m_k$ are less than or equal 
to $n$. Assume that at a stage $n$ of our inductive construction 
we have defined the values $\varphi_m(j)$ for all $m<n$ and $j\leq 2n$.
We now consider the triple $(\overline{m},\overline{\jmath}_1,\overline{\jmath}_2)$ 
which has number $n$ in our enumeration and note that the coordinates 
$m_1,\ldots,m_k$ of $\overline{m}$ are less than or equal to $n$. 
According to the conclusion of the lemma, we have to put 
$\varphi_{m_i}(2n+1)=j_{i,1}$ and $\varphi_{m_i}(2n+2)=j_{i,2}$ 
for each $i$ with $1\leq i\leq k$. The rest of the values $\varphi_m(j)$
with $m\leq n$ and $j\leq 2n+2$ can be chosen arbitrarily. 

Continuing this way, we obtain the sequence $\{\varphi_m: m\in\omega\}$
satisfying the condition of the lemma.
\end{proof}

\begin{theorem}\label{Th:nA}
Every precompact topological group of weight $\leq\cont$ is topologically isomorphic to a closed subgroup of a separable, connected, pseudocompact group $H$ of weight $\leq\cont$.
\end{theorem}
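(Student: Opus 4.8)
The plan is to reproduce the architecture of the proof of Theorem~\ref{Th:ab}, replacing the commutative ambient group $\mathbb{T}^\cont$ by a suitable compact \emph{connected} group and paying the price of a genuinely combinatorial construction of the dense countable subgroup. First I would pass to the Ra\u{\i}kov completion $\varrho{G}$, which is compact with $w(\varrho{G})=w(G)\le\cont$ (as in the abelian case, since character and weight coincide for $\omega$-narrow groups). By the Peter--Weyl theorem the finite-dimensional unitary representations of $\varrho{G}$ separate points, and a separating family of at most $w(\varrho{G})\le\cont$ of them suffices; hence $\varrho{G}$ embeds as a (necessarily closed) subgroup of a product $\Pi=\prod_{j\in J}U(n_j)$ with $|J|\le\cont$. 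Each $U(n_j)$ is a compact connected Lie group, so $\Pi$ is compact, connected, and $w(\Pi)\le\cont$. Identifying $G\sub\varrho{G}\sub\Pi$, I would then work inside $\Pi^\cont$, which is compact connected of weight $\le\cont$, hence separable by Theorems~\ref{Theorem 3} and~\ref{HMP}. As before, let $\Delta$ be the (closed) diagonal subgroup, let $\widetilde{G}=\{x\in\Delta:\pi_0(x)\in G\}\cong G$, and let $\Sigma=\{x\in\Pi^\cont:|\supp(x)|\le\omega\}$ be the $\sigma$-product, a dense, normal, countably compact (hence pseudocompact) subgroup of $\Pi^\cont$ \cite[Corollary~1.6.34]{AT}.

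The three ``soft'' properties of the target group come for free once $H$ is chosen to contain $\Sigma$ together with a countable dense subgroup. Indeed, $\Sigma$ projects \emph{onto} every finite subproduct of $\Pi^\cont$ and the factors of $\Pi^\cont$ are connected, so any subgroup $H\supseteq\Sigma$ is connected by \cite[Theorem~2.4.15]{AT}; $H$ is pseudocompact because it contains the dense pseudocompact subgroup $\Sigma$; and $w(H)\le w(\Pi^\cont)\le\cont$. Separability, however, is not automatic: unlike the abelian case a single monothetic element will not do, and, since $\Sigma$ itself is \emph{not} separable when there are $\cont$ factors, I would invoke Lemma~\ref{Le:tec2} to manufacture a countable family $D=\{d_m:m\in\omega\}$ whose coordinate behaviour is governed by the maps $\varphi_m$. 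The genericity built into that lemma is designed to guarantee simultaneously that finite words in $D$ realise arbitrary prescribed patterns on finitely many coordinates\,\,---\,\,so that $\hull{D}$ is dense in $\Pi^\cont$ and $H$ becomes separable\,\,---\,\,and that the $d_m$ are ``independent enough'' to play the role of the independent system $\{t_p\}$ of the abelian proof. I would then set $H=\hull{\widetilde{G}\cup\Sigma\cup D}$; since $\Sigma$ is normal in $\Pi^\cont$, one has $H=\hull{\widetilde{G}\cup D}\cdot\Sigma$, so every element of $H$ is of the form $w\cdot s$ with $s\in\Sigma$ and $w$ an alternating word in $\widetilde{G}\cup D$.

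The crux, and the step I expect to be the main obstacle, is the non-abelian analogue of the equality $H\cap\Delta=\widetilde{G}$, which is exactly what makes $\widetilde{G}\cong G$ a \emph{closed} subgroup of $H$ (as $\Delta$ is closed in $\Pi^\cont$). In the abelian setting every element of $H$ had the transparent normal form $g+s+nx_0$, and the Claim ``$\pi_\alpha(x_0^n)\neq\pi_\beta(x_0^n)$ for $n\neq0$'' reduced diagonal membership to a one-line coordinate count. Here an element of $H$ is an arbitrary alternating product of elements of $\widetilde{G}$, of the noncommuting generators $d_m$, and of a $\Sigma$-factor, so no such normal form is available. The task is to show that if such a word lies in $\Delta$, then its $D$-part is trivial and its $\Sigma$-part collapses: concretely, one compares the values of the word along two distinct coordinates $\alpha,\beta<\cont$ on the cofinitely many coordinates where the $\Sigma$-factor vanishes, and uses the strong independence of the $\varphi_m$ (the precise reason for the ``two consecutive arguments $n,n+1$'' clause in Lemma~\ref{Le:tec2}) to conclude that no nontrivial reduced word in $D$ can take equal values on two such coordinates. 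Once the $D$-part is eliminated, the surviving word lies in $\widetilde{G}\cdot\Sigma$, and $\widetilde{G}\sub\Delta$ together with $\Sigma\cap\Delta=\{e\}$ forces the element into $\widetilde{G}$, exactly as in the abelian argument. Verifying this word-by-word diagonal avoidance, and checking that the combinatorics of Lemma~\ref{Le:tec2} really do deliver both density and this independence at once, is where essentially all the difficulty of the theorem resides.
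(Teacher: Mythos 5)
Your architecture coincides with the paper's own: the Peter--Weyl embedding into a product of unitary groups is exactly the paper's identification of $\varrho{G}$ with a subgroup of $\Pi=\U^\cont$, and the objects $\Delta$, $\widetilde{G}$, $\Sigma$, the group $H=\hull{\widetilde{G}\cup D}\cdot\Sigma$ (using normality of $\Sigma$), and the reduction of closedness to $H\cap\Delta=\widetilde{G}$ are all as in the paper; your paragraph on the ``soft'' properties is correct. But there are two genuine gaps, located precisely at the steps you flag. First, $D$ is never constructed, and Lemma~\ref{Le:tec2} by itself cannot ``simultaneously'' deliver density and independence: in the paper the lemma contributes nothing to density. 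The paper re-indexes $\Pi^\cont$ as $\Pi^\R$, fixes a countable dense set $S=\{s_n\}\subset\Pi$, enumerates the family $\mathcal{A}$ of tuples $(U_1,\ldots,U_k,s_{i_1},\ldots,s_{i_k})$ with $U_j$ pairwise disjoint rational intervals, fixes pairwise disjoint open \emph{unbounded} sets $V_j\subset\R$, and defines $a_n$ to equal $s_{i_j}$ on $U_j$ (this Hewitt--Marczewski--Pondiczery-style part alone makes $D=\{a_n\}$ dense) and $s_{\varphi_n(j)}$ on $V_j\setminus\bigcup_{i\leq k}U_i$ (the only place the $\varphi$'s enter). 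Without some such separable structure on the index set, your plan has no mechanism for realizing prescribed values of all the relevant $a_m$ at two coordinates avoiding the countable set $B=\supp(b)$; the unboundedness of the $V_j$'s is exactly what makes $V_{n_0}\setminus(B\cup\bigcup U_i)$ nonempty. (Note also that the $\Sigma$-factor vanishes off \emph{co-countably} many coordinates, not cofinitely many.)

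Second, your key claim\,\,---\,\,that no nontrivial reduced word in $D$ can take equal values at two such coordinates, so that ``the $D$-part is trivial''\,\,---\,\,is false as stated and is not what the paper proves. A word $w=g_1d^{\epsilon}g_2d^{-\epsilon}g_3$ in which $g_2$ takes values in the (nontrivial) center of $\U$ has a nontrivial reduced pattern of $D$-letters, yet is constant as the $D$-entries vary and hence takes equal values at \emph{all} coordinates. The paper's proof instead substitutes variables $z_j$ for the distinct $d_i$ (respecting coincidences) and splits on whether the induced word map $w[\overline{z}]$ is constant. If it is constant, evaluating at $\overline{z}=(e,\ldots,e)$ shows the group element $w$ itself lies in $G_0$\,\,---\,\,no triviality of the $D$-letters is claimed or needed\,\,---\,\,and then $b\in\Sigma\cap\Delta=\{e\}$ forces $d=w\in G_0$. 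Only in the non-constant case does independence enter: one picks a coordinate $r$ where $w[\overline{z}_1](r)\neq w[\overline{z}_2](r)$, approximates the entries $z_{i,\delta}(r)$ by elements of $S$ inside suitable translates of a symmetric neighborhood $W$ (again respecting the coincidence pattern among the $d_i$), and invokes Lemma~\ref{Le:tec2} to find $n_0$ with $\varphi_{m_i}(n_0)=j_{i,1}$ and $\varphi_{m_i}(n_0+1)=j_{i,2}$, yielding $x\in V_{n_0}$ and $y\in V_{n_0+1}$ off $B\cup\bigcup U_i$ with $w(x)\neq w(y)$, contradicting $wb\in\Delta$. So your sketch identifies where the difficulty resides, but the dichotomy you propose is the wrong one (reducedness of the word rather than constancy of the word map), and the coordinate machinery that makes the contradiction available is missing.
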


\begin{proof}
It is known that every second countable compact topological group is 
topologically isomorphic to a subgroup of the group $\U=\prod_{n\in\N} U(n)$,
where $U(n)$ is the group of unitary $n\times n$ matrices with complex entries 
for each $n\in\N$. Hence every compact topological group is topologically 
isomorphic to a subgroup of some power of the group $\U$. In particular, if 
$G$ is a precompact topological group and $w(G)\leq\cont$, then the Ra\u{\i}kov 
completion of $G$, $\varrho{G}$, is a compact topological group of weight 
$\leq\cont$, so $\varrho{G}$ and $G$ are topologically isomorphic to subgroups 
of the compact group $\Pi=\U^\cont$. As $\U$ is connected, so is $\Pi$.

Let us identify $G$ with a subgroup of $\Pi$. Since $\U$ is second countable,
the group $\Pi$ is separable by the Hewitt--Marczewski--Pondiczery theorem.
Let $S=\{s_n: n\in\omega\}$ be a dense subset of $\Pi$, where each $s_n$ is
distinct from the identity element $e_\Pi$ of $\Pi$. First we are going to
define a special countable dense subset of $\Pi^\cont$ modifying the original
construction of Hewitt--Marczewski--Pondiczery. To this end, we replace the 
index set $\cont$ with $\R$, so we will work with $\Pi^{\R}$ in place of
$\Pi^\cont$. 

Let $\mathcal{B}$ be the base for the usual topology on $\R$ which consists
of the intervals $(a,b)$ with rational endpoints $a,b$. 

We are now in the position to define a special countable dense subset $D$
of $\Pi^\R$. Let 
\begin{align*}
\mathcal{A} = \{ & (U_1,\ldots,U_k, s_{i_1},\ldots,s_{i_k}): k\geq 1,\ U_1,\ldots,U_k 
\mbox{ are pairwise disjoint }\\ 
& \mbox{ elements of } \mathcal{B}, \mbox{ and } i_1,\ldots,i_k\in\omega \}.
\end{align*}
It is clear that $\mathcal{A}$ is countable. Let us enumerate this family as
$\mathcal{A}=\{L_n: n\in\omega\}$. We also choose a pairwise disjoint sequence 
$\{V_m: m\in\omega\}$ of open unbounded subsets of $\R$. Let $\{\varphi_m: 
m\in\omega\}$ be a sequence of mappings of $\omega$ to $\omega$ satisfying 
the conclusion of Lemma~\ref{Le:tec2}. Given $n\in\omega$ and
$L_n=(U_1,\ldots,U_k, s_{i_1},\ldots,s_{i_k})$ in $\mathcal{A}$, we define 
an element $a_n\in\Pi^\R$ as follows: 
$$
a_n(r)=\begin{cases} s_{i_j} &\text{if $r\in U_j$ for some $j\leq k$};\\
s_{\varphi_n(j)} &\text{if $r\in V_j\setminus\bigcup_{i\leq k} U_k$ 
   for some $j\in\omega$};\\
e_\Pi&\text{otherwise},
\end{cases}
$$
where $r\in\R$. A  standard argument shows that the countable set 
$D=\{a_n: n\in\omega\}$ is dense in $\Pi^\cont$.

Let also $\Sigma$ be the $\Sigma$-product of continuum many copies of the 
group $\Pi$ considered as a subgroup of $\Pi^\R$. As in Theorem~\ref{Th:ab},
$\Sigma$ is a dense pseudocompact subgroup of $\Pi^\R$. 

Denote by $\Delta$ the diagonal subgroup of $\Pi^\R$:
$$
\Delta=\{x\in\Pi^\R: x(r)=x(s) \mbox{ for all } 
r,s\in\R\}.
$$
It is clear that 
$$G_0=\{x\in\Delta: x(0)\in G\}
$$
is an isomorphic topological copy of $G$. 

Denote by $E$ the subgroup of $\Pi^\R$ generated by the set $G_0\cup D$.   
Finally we put $H=E\cdot\Sigma$. Then $H$ is a subgroup of $\Pi^\R$ since 
$\Sigma$ is an invariant subgroup of $\Pi^\R$. As in the proof of Theorem~\ref{Th:ab}, 
it is easy to see that $H$ is a separable, dense, pseudocompact subgroup of 
$\Pi^\R\cong \U^{\R\times\cont}$. Since the group $\U$ is compact and metrizable,
the subgroup $H$ of the connected group $\Pi^\R$ is also connected according to \cite[Theorem~2.4.15]{AT}.

The main difficulty here is to verify the equality $H\cap\Delta=G_0$ analogous 
to (1) in the proof of Theorem~\ref{Th:ab}. Once this is done, we will immediately 
conclude that $G_0\cong G$ is closed in $H$.

Assume that $d=w\cdot b\in\Delta$, where $w\in E$ and $b\in\Sigma$.
We have to show that $d\in G_0$. Let $B=\supp(b)$. Then $w(x)=w(y)$ 
for all $x,y\in\R\setminus B$. Further, the element $w\in E$ has the 
form $w=g_1d_1^{\epsilon_1}\cdots g_nd_n^{\epsilon_n}g_{n+1}$, where 
$g_1,\ldots,g_n,g_{n+1}\in G_0$, $d_1,\ldots,d_n\in D$, and $\epsilon_1,
\ldots,\epsilon_n\in\{1,-1\}$. Here some (or even all) of the elements 
$g_1,\ldots,g_n,g_{n+1}$ can be equal to the identity element $e$ of
$\Pi^\R$ and some of $d_i$ can coincide. 

The word $w$ has the form $w=g_1d_1^{\epsilon_1}\cdots g_nd_n^{\epsilon_n}g_{n+1}$,
where $n\geq 1$. Substituting $d_i$ with variables $z_j$ and assigning 
the same variable to $d_{i_1}$ and $d_{i_2}$ whenever $d_{i_1}=d_{i_2}$, we 
obtain the word $w[\overline{z}]=g_1z_1^{\epsilon_1}\cdots g_nz_p^{\epsilon_n}g_{n+1}$, 
where $\overline{z}=(z_1,\ldots,z_k)$, $k\leq n$, and $1\leq p\leq k$. 
For example, $k=1$ if and only if all the $d_i$ are equal, and $k=n$
if all the $d_i$ are pairwise distinct. Let 
$$
R=\{w[\overline{z}]: \overline{z}=(z_1,\ldots,z_k)\in (\Pi^\R)^k\}
$$
be the range of $w[\,\cdot\,]$. We consider two cases.\smallskip

\noindent Case~1. All values of $w(\overline{z})$ coincide, i.e.
$|R|=1$. Let us take $\overline{z}_0=(e,\ldots,e)$, i.e. $z_1=\cdots=z_k=e$. 
Then $w=w[\overline{z}_0]=g_1\cdots g_n\cdot g_{n+1}\in G_0$.
It now follows from $d=w\cdot b\in\Delta$ that $b=w^{-1}d\in\Delta\cap\Sigma$,
and we conclude that $b=e$ and $d=w\in G_0$.\smallskip

\noindent Case~2. $|R|\geq 2$. Then we choose $\overline{z}_1=(z_{1,1},
\ldots,z_{k,1})$ and $\overline{z}_2=(z_{1,2},\ldots,z_{k,2})$ in 
$(\Pi^\R)^k$ such that 
$w[\overline{z}_1]\neq w[\overline{z}_2]$. We claim 
that there exist $x,y\in\R\setminus B$ such that $w(x)\neq w(y)$, which is 
a contradiction. 

Indeed, take $r\in\R$ such that $w[\overline{z}_1](r)\neq w[\overline{z}_2](r)$.
Hence 
\begin{align*}
g_1(r)z_{1,1}(r)^{\epsilon_1}\cdots z_{p,1}(r)^{\epsilon_n}g_{n+1}(r) &\neq\\
g_1(r)z_{1,2}(r)^{\epsilon_1}\cdots z_{p,2}(r)^{\epsilon_n}g_{n+1}(r) &.
\end{align*}

Since multiplication and inversion in $\Pi$ are continuous, we can find 
an open symmetric neighborhood $W$ of $e_\Pi$ in $\Pi$ such that the sets 
$$
O_\delta=g_1(r)\Bigl(z_{1,i}(r)^{\epsilon_1}W\Bigr)\cdots 
g_n(r)\Bigl(z_{n,i}(r)^{\epsilon_n}W\Bigr)g_{n+1}(r)
$$
with $\delta=1,2$ are disjoint. Let us put $t_{i,\delta}=z_{i,\delta}(r)$ 
for all $i=1,\ldots,k$ and $\delta=1,2$. Since the set $S=\{s_j: 
j\in\omega\}$ is dense in $\Pi$, we can choose, for every $i\leq n$ 
and $\delta=1,2$, an element $s_{i,\delta}\in S\cap t_{i,\delta}W\cap 
Wt_{i,\delta}$. Then $s_{i,\delta}^\epsilon\in t_{i,\delta}^{\epsilon}W$ 
for each $\epsilon=\pm1$. Furthermore, according to our choice of the 
variables $z_i$, the elements $s_{i,\delta}$ can be chosen to satisfy 
$s_{i,\delta}=s_{l,\delta}$ whenever $d_i=d_l$, where $i,l\leq n$ 
and $\delta=1,2$.

It now follows from $O_1\cap O_2=\emptyset$ that the elements
\begin{equation}\label{Eq:2}
h_1=g_1(r)s_{1,1}^{\epsilon_1}\cdots g_i(r)s_{i,1}^{\epsilon_i}\cdots
g_n(r)s_{n,1}^{\epsilon_n}g_{n+1}(r)\in O_1
\end{equation} 
and 
\begin{equation}\label{Eq:3}
h_2=g_1(r)s_{1,2}^{\epsilon_1}\cdots g_i(r)s_{i,2}^{\epsilon_i}\cdots 
g_n(r)s_{n,2}^{\epsilon_n}g_{n+1}(r)\in O_2
\end{equation} 
are distinct. Let us recall that each $d_i$ is in 
$D=\{a_m: m\in\omega\}$, so for every $i\leq n$ there exists 
$m\in\omega$ such that $d_i=a_m$. Since the set $\{d_i: 1\leq i\leq n\}$ 
contains exactly $k$ pairwise distinct elements, there are pairwise
distinct non-negative integers $m_1,\ldots,m_k$ such that 
$\{d_i: 1\leq i\leq n\}=\{a_{m_1},\ldots,a_{m_k}\}$ and $a_{m_j}$
corresponds to the variable $z_j$ for $j=1,\ldots,k$. Similarly,
choose integers $j_{i,\delta}$ for $1\leq i\leq k$ and $\delta=1,2$ 
such that $\{s_{i,\delta}: 1\leq i\leq n\}=\{s_{j_{1,\delta}},\ldots,
s_{j_{k,\delta}}\}$ for each $\delta=1,2$, where both $s_{j_{i,1}}$ 
and $s_{j_{i,2}}$ correspond to the variable $z_i$, $1\leq i\leq k$.  

Consider the $k$-tuples $(m_1,\ldots,m_k)$, $(j_{1,1},\ldots,j_{k,1})$, 
and $(j_{1,2},\ldots,j_{k,2})$.
\newline
By Lemma~\ref{Le:tec2}, there exists
$n_0\in\omega$ such that $\varphi_{m_i}(n_0)=j_{i,1}$ and $\varphi_{m_i}(n_0+1)=
j_{i,2}$ for all $i=1,\ldots,k$. Let $L_{n_0}=(U_1,\ldots,U_l,s_{i_1},
\ldots,s_{i_l})$. Take $x\in V_p\setminus (B\cup\bigcup_{i\leq l} U_i)$ 
and $y\in V_{p+1}\setminus (B\cup\bigcup_{i\leq l} U_i)$, where 
$B=\supp(b)$. This choice of $x$ and $y$ is possible since the sets 
$V_p$ and $V_{p+1}$ are unbounded in $\R$. Then our definition of 
the elements $a_m$ implies that $a_{m_i}(x)=s_{j_{i,1}}$ and 
$a_{m_i}(y)=s_{j_{i,2}}$ for each $i=1,\ldots,k$. Therefore the elements 
$w(x)=h_1\in O_1$ and $w(y)=h_2\in O_2$ of $\Pi$ are distinct (see
the equalities (\ref{Eq:2}) and (\ref{Eq:3})). This contradiction completes 
the proof of the theorem.
\end{proof}

Our next aim is to present an example of a countably compact separable 
abelian group with a closed non-separable subgroup. Our argument makes use 
of an \textit{$\omega$-hereditarily finally dense} subgroup of $\Z(2)^{\omega_1}$
constructed in \cite{HJ} by A. Hajnal and I. Juh\'asz under the assumption of the 
Continuum Hypothesis. Hence our example here also requires $CH$. 

The following lemma is almost evident.

\begin{lemma}\label{Le:pro}
Let $K$ and $L$ be subgroups of a topological abelian group $G$. If
$K$ is countably compact and $L$ is $\omega$-bounded, then $K+L$
is a countably compact subgroup of $G$.
\end{lemma}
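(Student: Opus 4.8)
The plan is to reduce the statement to the well-known fact that the product of a countably compact space with a compact space is countably compact. First I would note that, since $G$ is abelian, the set $K+L=\{k+l:k\in K,\ l\in L\}$ is genuinely a subgroup of $G$: commutativity lets one rewrite $(k_1+l_1)+(k_2+l_2)$ as $(k_1+k_2)+(l_1+l_2)$, and $-(k+l)$ as $(-k)+(-l)$, so $K+L$ is closed under the group operations and contains the identity. It therefore remains only to establish countable compactness, and for this I would use the criterion that a space is countably compact precisely when every sequence in it has a cluster (accumulation) point.

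So let $\{a_n:n\in\omega\}$ be an arbitrary sequence in $K+L$ and write $a_n=k_n+l_n$ with $k_n\in K$ and $l_n\in L$. Since $L$ is $\omega$-bounded, the set $L_0=\overline{\{l_n:n\in\omega\}}$ is a compact subset of $L$; indeed the closure of $\{l_n:n\in\omega\}$ taken in $L$ is compact, hence closed in the Hausdorff space $G$, so it coincides with the closure taken in $G$ and is contained in $L$. In particular every $l_n$ lies in $L_0$, so each $a_n$ belongs to $K+L_0$.

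The key step is to observe that $K\times L_0$ is countably compact, being the product of the countably compact space $K$ with the compact space $L_0$ (this is a standard fact; see \cite{Eng}). Applying the continuous addition map of $G$ to this product exhibits $K+L_0$ as a continuous image of a countably compact space, hence itself countably compact. Since the sequence $\{a_n\}$ lies entirely in $K+L_0$, it has a cluster point $c\in K+L_0\subseteq K+L$. As the sequence was arbitrary, every sequence in $K+L$ clusters inside $K+L$, which is exactly countable compactness.

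The only genuine obstacle is the coordination of the two coordinates: one cannot simply take a cluster point $k_0$ of $\{k_n\}$ in $K$ and a cluster point $l_0$ of $\{l_n\}$ in $L_0$ and declare $k_0+l_0$ to be a cluster point of $\{a_n\}$, because the index sets along which the two sequences accumulate need not meet in an infinite set. Passing through the countably compact product $K\times L_0$ is precisely what performs this coordination, and invoking the product-of-countably-compact-and-compact theorem is what makes the whole argument, as stated, almost evident.
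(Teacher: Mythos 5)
Your proof is correct, and both delicate points are handled properly: the subgroup verification via commutativity, and the transfer of the closure of $\{l_n:n\in\omega\}$ from $L$ to $G$ (compact in $L$, hence closed in the Hausdorff group $G$). But your route differs from the paper's. The paper simply cites Vaughan's theorem that the product of a countably compact space and an $\omega$-bounded space is countably compact, and then notes that $K+L$ is the image of the countably compact space $K\times L$ under the continuous addition map. You avoid that stronger product theorem: given a sequence $a_n=k_n+l_n$, you use $\omega$-boundedness to trap the $l_n$ in a compact set $L_0$, so that only the classical fact that the product of a countably compact space and a \emph{compact} space is countably compact is needed, applied to $K\times L_0$. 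In effect you have inlined the proof of the very result the paper cites, since Vaughan's theorem is itself proved by exactly this localization of a sequence into a compact slice of the $\omega$-bounded factor; your remark that passing through the product is what coordinates the two index sets is precisely the content of that proof. What each approach buys: the paper's argument is two lines and establishes countable compactness of the full product $K\times L$ at once; yours is self-contained modulo a genuinely standard textbook fact, at the cost of a sequence-by-sequence argument via the cluster-point characterization of countable compactness (which, as you implicitly use, is valid in arbitrary topological spaces, so no extra separation hypotheses sneak in). One pedantic point: the key \cite{Eng} in this paper's bibliography is Engelking's note on the double circumference of Alexandroff, not his \emph{General Topology}, where the compact-factor product theorem actually appears; Vaughan's survey \cite{Vau} would be the apt citation for your standard fact.
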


\begin{proof}
It is known that the product of a countably compact space and an $\omega$-bounded
space is countably compact (this follows from \cite[Theorem~3.3]{Vau}). Since $K+L$ 
is a continuous image of the countably compact space $K\times L$, the required conclusion is immediate.
\end{proof}

\begin{proposition}\label{Pr:CoCo}
Under $CH$, there exists a separable countably compact topological 
abelian group $G$ which contains a closed non-separable subgroup.
\end{proposition}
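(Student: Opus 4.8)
The plan is to realise $G$ as a sum $G=D+F$ of two subgroups of the compact group $\Z(2)^{\omega_1}$, with $D$ carrying the separability and $F$ serving as the closed non-separable subgroup. Under $CH$, fix the $\omega$-hereditarily finally dense subgroup $D$ of $\Z(2)^{\omega_1}$ produced in \cite{HJ}; I shall use that $D$ is dense in $\Z(2)^{\omega_1}$, hereditarily separable (the defining feature of an HFD set), and countably compact (this is what the prefix $\omega$ buys us). Next split the index set as $\omega_1=A\sqcup B$ with $|A|=|B|=\omega_1$, identify $\Z(2)^{\omega_1}$ with $\Z(2)^A\times\Z(2)^B$, and let
\[
F=\{x\in\Z(2)^{\omega_1}:\ \supp(x)\sub B\ \text{and}\ |\supp(x)|\le\omega\}
\]
be the $\Sigma$-product sitting on the block $B$. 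Then $F$ is $\omega$-bounded, its closure in $\Z(2)^{\omega_1}$ is the compact subgroup $\Z(2)^B\times\{0\}$, and $F$ is non-separable because $B$ is uncountable. Put $G=D+F$.

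Three of the four required properties would then be almost immediate. Since $D$ is dense in $\Z(2)^{\omega_1}$ and separable, any countable dense subset of $D$ is dense in the whole cube, hence dense in the intermediate subgroup $G$; thus $G$ is separable. Since $D$ is countably compact and $F$ is $\omega$-bounded, Lemma~\ref{Le:pro} shows that $G=D+F$ is countably compact. Finally $F$ is non-separable, as a $\Sigma$-product indexed by an uncountable set. It remains to prove that $F$ is closed in $G$, and here I would reduce the problem to an intersection identity: because $\Z(2)^B\times\{0\}$ is closed in $\Z(2)^{\omega_1}$ and equals $\overline{F}$, it suffices to check that
\[
G\cap\bigl(\Z(2)^B\times\{0\}\bigr)=F.
\]
Writing a typical element of the left-hand side as $d+s$ with $d\in D$ and $s\in F\sub\Z(2)^B\times\{0\}$, one gets $d=(d+s)-s\in D\cap(\Z(2)^B\times\{0\})$, so the identity follows once we know that $D$ meets the block subgroup trivially, that is,
\[
D\cap\bigl(\Z(2)^B\times\{0\}\bigr)=\{0\}.
\]

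The hard part is precisely this last condition: it says that no nonzero element of $D$ is supported inside $B$, equivalently that the projection $\Z(2)^{\omega_1}\to\Z(2)^A$ is injective on $D$. A prescribed HFD subgroup need not satisfy it — if $D$ happened to contain the characteristic functions of co-countably many singletons, then no uncountable $B$ could work — so I would not try to choose $B$ after $D$, but rather build the split $\omega_1=A\sqcup B$ into the transfinite recursion of Hajnal and Juh\'asz that produces $D$. Concretely, while enumerating $D=\{d_\xi:\xi<\omega_1\}$ and meeting at each stage the finitely many finally-dense requirements that make the group HFD together with the limit requirements that make it countably compact, I would reserve coordinates of $A$ to be filled so that every newly created nonzero element receives a nonzero value somewhere in $A$; since at each countable stage only countably many coordinates have been committed while $|A|=\omega_1$, fresh coordinates of $A$ remain available, and the support condition can be maintained without disturbing density or countable compactness. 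Granting that this bookkeeping goes through, $F$ is closed in $G$, and $G$ is the desired separable countably compact abelian group with a non-separable closed subgroup. I expect the verification that the support requirement is compatible with the HFD and countable-compactness demands to be the only genuinely delicate point; the separability and countable compactness of $G$ are then formal consequences of the choice of building blocks.
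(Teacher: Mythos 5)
Your overall architecture --- the sum of a Hajnal--Juh\'asz $\omega$-HFD subgroup with a $\Sigma$-product, separability coming from the dense HFD part, countable compactness from Lemma~\ref{Le:pro}, and closedness of the second summand reduced to an intersection identity --- is exactly the skeleton of the paper's proof (the paper places the $\Sigma$-product on the diagonal $\Delta$ of $P^{\omega_1}$ with $P=\Z(2)^{\omega_1}$ rather than on a block $\Z(2)^B\times\{0\}$ of a split $\omega_1=A\sqcup B$; that difference is inessential). The genuine gap is the step you yourself flag: you insist on the exact identity $D\cap\bigl(\Z(2)^B\times\{0\}\bigr)=\{0\}$, observe correctly that an arbitrary HFD subgroup need not satisfy it, and then propose to re-engineer the Hajnal--Juh\'asz recursion to force it. That modification is only asserted (\lq\lq granting that this bookkeeping goes through\rq\rq), and it is not routine: it is not enough that each new generator receive a nonzero coordinate in $A$; you need the projection to $\Z(2)^A$ to be injective on the whole group, i.e.\ every finite sum of generators must have nonzero $A$-part, and this must survive the limit stages that secure countable compactness, where new elements are accumulation points you do not get to prescribe coordinate-by-coordinate. (A smaller slip: countable compactness of the group in \cite{HJ} is a separately arranged feature of that construction, not what the prefix $\omega$ in \lq\lq$\omega$-HFD\rq\rq{} means; the prefix concerns final density of all infinite subsets after deleting countably many coordinates.) As written, the proof is therefore incomplete at its crucial point.

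The irony is that no modification of the Hajnal--Juh\'asz construction is needed, and this is precisely the paper's trick: the $\omega$-HFD property by itself makes the intersection \emph{finite}, and finite is enough. Indeed, suppose $S=D\cap\bigl(\Z(2)^B\times\{0\}\bigr)$ were infinite. Then there is a countable $C\subset\omega_1$ such that $\pi_{\omega_1\setminus C}(S)$ is dense in $\Z(2)^{\omega_1\setminus C}$; but every element of $S$ vanishes on $A$, so $\pi_{\omega_1\setminus C}(S)$ lies in the set of points vanishing on $A\setminus C$, a proper closed (hence nowhere dense) subgroup of $\Z(2)^{\omega_1\setminus C}$ --- a contradiction. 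Thus $S$ is a finite subgroup of $D$, being the intersection of two subgroups. Now take as your closed non-separable subgroup not $F$ but $K=G\cap\bigl(\Z(2)^B\times\{0\}\bigr)=F+S$: it is closed in $G$ as the trace of a closed subgroup of the cube; it is $\omega$-bounded, since $S$ is finite and $F$ is $\omega$-bounded; and it is a proper dense subgroup of the compact group $\Z(2)^B\times\{0\}$, proper because $|K|\leq\cont<2^{\omega_1}$ and dense because $F$ is. If $K$ were separable, $\omega$-boundedness would make it compact, hence closed in $\Z(2)^B\times\{0\}$, contradicting proper density. This is exactly how the paper absorbs the finite discrepancy $H\cap\Delta$ into its subgroup $K=G\cap\Delta=\Sigma_0+F$ with $F$ finite; with this substitution your argument closes under $CH$ with the group of \cite{HJ} used off the shelf.
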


\begin{proof}
Let $P=\Z(2)^{\omega_1}$ and $\Pi=P^{\omega_1}$, where both groups 
carry the usual Tychonoff product topology; so $P$ and $\Pi$ are compact 
topological abelian groups. Since $\Pi=(\Z(2)^{\omega_1})^{\omega_1}
\cong \Z(2)^{\omega_1\times\omega_1}\cong \Z(2)^{\omega_1}$, it follows
from \cite[Theorem~2.2]{HJ} that under $CH$, $\Pi$ contains a dense countably
compact \textit{$\omega$-HFD} subgroup $H$ of the cardinality $\cont$. The 
latter means that for every infinite subset $S$ of $H$, there exists a countable 
set $C\subset\omega_1$ such that $\pi_{\omega_1\setminus C}(S)$ is dense 
in $P^{\omega_1\setminus C}$, where $\pi_J$ denotes the projection of 
$P^{\omega_1}$ onto $P^J$ for each non-empty set $J\subset\omega_1$. 
[More precisely, the fact that $H$ is $\omega$-HFD appears on page~202 in
the proof of Theorem~2.2 in \cite{HJ}.] Further, according to \cite[Theorem~2.2]{HJ}, 
the group $H$ is hereditarily separable, i.e. every subspace of $H$ is separable. In particular, $H$ is separable.

Denote by $\Delta$ the diagonal subgroup of $\Pi$, i.e. let
$$
\Delta=\{x\in\Pi: \pi_\alpha(x)=\pi_\beta(x) \mbox{ for all }
\alpha,\beta\in\omega_1\},
$$
where $\pi_\alpha\colon\Pi\to P_{(\alpha)}$ is the projection of $\Pi$
to the $\alpha$th factor. Then $\Delta$ is a closed subgroup of $\Pi$
topologically isomorphic to $P$\,\,---\,\,the projection $\pi_0$ of $\Delta$ 
to $P_{(0)}$ is a topological isomorphism. 

Let $\Sigma_P$ be the $\Sigma$-product of $\omega_1$ copies of the 
group $\Z(2)$ which is identified with the corresponding subgroup of 
$P=\Z(2)^{\omega_1}$. Then $\Sigma_P$ is a proper, dense, countably
compact subgroup of $P$. In fact, $\Sigma_P$ is \textit{$\omega$-bounded},
i.e. the closure in $\Sigma_P$ of every countable subset of $\Sigma_P$ is 
compact \cite[Corollary~1.6.34]{AT}. Further, since $\Sigma_P$ is not 
compact, it cannot be separable. 

Let $G=H+\Sigma_0$, where 
$$
\Sigma_0=\{x\in\Delta: \pi_0(x)\in\Sigma_P\}
$$
is the \lq{diagonal\rq} copy of the group $\Sigma_P$. Again, the groups
$\Sigma_0$ and $\Sigma_P$ are topologically isomorphic. Let us note that
by Lemma~\ref{Le:pro}, $G$ is a countably compact subgroup of $\Pi$. It
is clear that $G$ is separable since it contains a dense separable subgroup 
$H$. 

We claim that the intersection $K=G\cap\Delta$ satisfies $|K:\Sigma_0|<\omega$.
It is clear that $\Sigma_0\subset K$, so it suffices to verify that 
$|H\cap\Delta|<\omega$. Indeed, since the projection of $\Delta$ to an 
arbitrary sub-product $P^J$, with $|J|\geq\omega$, is nowhere dense in 
$P^J$, we see that $H\cap \Delta$ is not finally dense in $\Pi$. Hence
$H\cap\Delta$ is finite. We have thus proved that $|K:\Sigma_0|<\omega$.

It is clear that $K$ is a closed subgroup of $G$. It remains to show 
that the group $K$ is not separable. Since $|K:\Sigma_0|<\omega$ and 
$|\Delta:\Sigma_0|>\omega$, we conclude that $K$ is a proper dense 
subgroup of $\Delta$. Further, there exists a finite subset $F$ of
$K$ such that $K=\Sigma_0+F$. Since $\Sigma_0$ is $\omega$-bounded,
so is $K$. Thus, if $K$ were separable it would be compact, contradicting 
the fact that $K$ is a proper dense subgroup of $\Delta$. 
\end{proof}

Since we have used $CH$ in Proposition~\ref{Pr:CoCo}, it is natural to ask whether a similar construction is possible in $ZFC$ alone:

\begin{question}\label{Example 3} 
Does there exist in $ZFC$ a countably compact separable group $X$ which contains a non-separable closed subgroup?
\end{question}

\section{Prodiscrete groups}\label{Sec:Pd}
Let us recall that a topological group which has a local base at the
identity element consisting of open subgroups is called \textit{protodiscrete.}
A complete protodiscrete group is said to be \textit{prodiscrete.}
Protodiscrete topological groups are exactly the totally disconnected 
pro-Lie groups \cite[Proposition~3.30]{PROBOOK}.

We show in Corollary~\ref{Cor:subg} below that closed subgroups of separable 
prodiscrete abelian groups can fail to be separable. First we prove a general result
on embeddings into separable topological groups. 

\begin{proposition}\label{Pro:12}
A (protodiscrete abelian) topological group $H$ is topologically isomorphic 
to a subgroup of a separable (prodiscrete abelian) topological group if and 
only if $H$ is $\omega$-narrow and satisfies $w(H)\leq\cont$. 
\end{proposition}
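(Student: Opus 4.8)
The statement is an iff, so two directions are needed. The plan is to prove the easier necessity direction first, then build the sufficiency direction by an explicit embedding into a product of second countable groups (for the general $\omega$-narrow case) and a parallel diagonal-plus-$\Sigma$-product construction (for the protodiscrete abelian case, to control disconnectedness and completeness).

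First I would handle necessity. Suppose $H$ embeds as a subgroup of a separable topological group $X$. Every separable topological group is $\omega$-narrow by \cite[Corollary~3.4.8]{AT}, and the class of $\omega$-narrow groups is hereditary with respect to arbitrary subgroups \cite[Section~3.4]{AT}, so $H$ is $\omega$-narrow. For the weight bound, a separable topological group is a separable regular space (all groups here are Hausdorff, hence regular), so $w(X)\le\cont$ by Theorem~\ref{Th:1} (De Groot), and weight is monotone under passing to subspaces, giving $w(H)\le w(X)\le\cont$. The parenthetical protodiscrete abelian version of necessity is immediate since a subgroup of an abelian group is abelian and the $\omega$-narrow/weight conclusions are the same.

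For sufficiency, assume $H$ is $\omega$-narrow with $w(H)\le\cont$. The key step is Guran's theorem (Theorem~\ref{Th:Gur}): an $\omega$-narrow group embeds topologically into a product $\prod_{i\in I}H_i$ of second countable groups. Here the natural index set has size $\le w(H)\le\cont$, since one can take $I$ indexed by a base (or a family of continuous homomorphisms to second countable groups separating the topology, via \cite[Corollary~3.4.19]{AT}) of cardinality $\le\cont$; I would verify explicitly that $|I|\le\cont$ can be arranged. Each second countable group $H_i$ is separable, so by the Hewitt--Marczewski--Pondiczery theorem (Theorem~\ref{HMP}), the product of at most $\cont$ separable spaces is separable; thus $\prod_{i\in I}H_i$ is a separable topological group containing a topological copy of $H$. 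This settles the non-parenthetical claim.

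The harder part is the protodiscrete abelian refinement, where the ambient separable group must itself be prodiscrete abelian. If $H$ is protodiscrete abelian and $\omega$-narrow with $w(H)\le\cont$, then the separating homomorphisms furnished by Guran's construction should be taken to land in \emph{second countable protodiscrete} (i.e. second countable groups with a base at the identity of open subgroups, hence metrizable prodiscrete) abelian groups; this uses that $H$ has a base at $e$ of open subgroups, so each quotient by an open subgroup from the base is discrete, and one assembles countably many such to get second countable protodiscrete targets. The product of $\le\cont$ such groups is again protodiscrete abelian and separable by Theorem~\ref{HMP}. To obtain a \emph{prodiscrete} (complete) separable group one then passes to the completion of the resulting product, or more directly notes that the full product of complete protodiscrete abelian groups is already complete, hence prodiscrete, and contains $H$ as a subgroup. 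I expect the main obstacle to be exactly this control of the category of the target factors: ensuring that Guran-type embedding can be carried out through second countable groups that are simultaneously protodiscrete and abelian, so that the product retains all three properties while the cardinality bound $\le\cont$ keeps separability intact.
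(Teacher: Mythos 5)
Your proposal is correct and takes essentially the same route as the paper: necessity from $\omega$-narrowness of separable groups and De Groot's bound (Theorem~\ref{Th:1}), sufficiency from Guran's theorem (Theorem~\ref{Th:Gur}) together with Theorem~\ref{HMP}, and, in the protodiscrete abelian case, an embedding through the quotients $H/N$ over a local base $\mathcal{N}(e)$ of open subgroups with $|\mathcal{N}(e)|\leq\cont$. The \emph{main obstacle} you flag at the end is settled by the one observation the paper makes explicitly: since $H$ is $\omega$-narrow, each discrete quotient $H/N$ is countable, so the diagonal product of the canonical homomorphisms embeds $H$ directly into $P=\prod_{N\in\mathcal{N}(e)}H/N$, a product of countable discrete groups that is separable by Theorem~\ref{HMP} and already complete and protodiscrete, hence prodiscrete --- no assembly of quotients into intermediate second countable targets and no passage to a completion is required.
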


\begin{proof}
Assume that a topological group $H$ is a subgroup of a separable topological group $G$. Then $G$ is $\omega$-narrow \cite[Corollary~3.4.8]{AT} and satisfies $w(G)\leq\cont$. Since subgroups of $\omega$-narrow topological groups are $\omega$-narrow \cite[Theorem~3.4.4]{AT}, we see that $H$ is also $\omega$-narrow and satisfies $w(H)\leq\cont$.

Conversely, assume that an $\omega$-narrow group $H$ satisfies $w(H)\leq\cont$. It follows from Theorem~\ref{Th:Gur} (see also \cite[Theorem~3.4.23]{AT}) that $H$ is topologically isomorphic to a subgroup of a topological product $\Pi=\prod_{i\in I} G_i$, where the index set $I$ has the cardinality at most $\cont$ and each factor $G_i$ is a second countable topological group. The group $\Pi$ is separable by the Hewitt--Marczewski--Pondiczery theorem. 

Further, if the group $H$ is protodiscrete, then all factors $G_i$ can be chosen countable and discrete. Indeed, let $\mathcal{N}(e)$ be a local base at the identity element $e$ of $H$ consisting of open subgroups and satisfying $|\mathcal{N}(e)|\leq\cont$. For every $N\in\mathcal{N}(e)$, denote by $\pi_N$ the canonical homomorphism of $H$ onto the discrete quotient group $H/N$. Then the diagonal product of the family $\{\pi_N: N\in\mathcal{N}(e)\}$ is a topological isomorphism of $H$ onto a subgroup of the product group $P=\prod_{N\in\mathcal{N}(e)} H/N$. Since the group $H$ is $\omega$-narrow, each quotient group $H/N$ is countable. Thus $H$ is a topological subgroup of $P$, a product of countable discrete groups. As $|\mathcal{N}(e)|\leq\cont$, the group $P$ is separable. Evidently, the group $P$ is prodiscrete. This completes our argument. 
\end{proof}

\begin{corollary}\label{Cor:subg}
Closed subgroups of separable prodiscrete abelian groups need not be separable.
\end{corollary}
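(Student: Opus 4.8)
The plan is to exhibit a concrete separable prodiscrete abelian group that contains a closed non-separable subgroup, rather than arguing abstractly. The natural candidate is furnished by Proposition~\ref{Pro:12}: I want to find an $\omega$-narrow protodiscrete abelian group $H$ with $w(H)\leq\cont$ that is itself non-separable, and then embed it as a \emph{closed} subgroup of a separable prodiscrete abelian group. The construction in the proof of Proposition~\ref{Pro:12} already embeds such an $H$ into a product $P=\prod_{N\in\mathcal{N}(e)} H/N$ of countable discrete groups, with $P$ separable and prodiscrete; since $H$ is complete (being prodiscrete) and this embedding is a topological isomorphism onto its image, that image is closed in $P$. So the whole problem reduces to producing a single non-separable example of such an $H$.

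First I would recall the standard fact that the class of $\omega$-narrow groups can contain non-separable members: by the remarks following Theorem~\ref{Th:Gur}, $\omega$-narrow groups can even have uncountable cellularity, hence need not be separable. Concretely, I would take $H$ to be a $\Sigma$-product (or a bounded torsion group) sitting inside a power $\Z(2)^\kappa$, or alternatively any dense $\omega$-narrow non-separable subgroup of such a product with $\kappa\leq\cont$. The key properties to check for the chosen $H$ are: (a) $H$ is abelian and protodiscrete, which holds because it inherits a neighbourhood base of open subgroups from $\Z(2)^\kappa$; (b) $H$ is $\omega$-narrow, for instance because it embeds in a product of countable groups, invoking Theorem~\ref{Th:Gur}; (c) $w(H)\leq\cont$, which follows from $\kappa\leq\cont$; and (d) $H$ is not separable, which is exactly where the non-trivial content lies.

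For step (d) I would lean on the observation (already used in Remark~\ref{Re:Ark}) that there exist $\omega$-narrow subgroups of powers of discrete groups having uncountable cellularity, e.g.\ the theorem of Uspenskij \cite{Usp3} that $\Z^\cont$ contains a subgroup of uncountable cellularity; taking an analogous subgroup of $\Z(2)^\cont$ gives a protodiscrete $\omega$-narrow group that cannot be separable, since a separable space has countable cellularity. Thus I would set $H$ to be such a subgroup, verifying it is protodiscrete and abelian with $w(H)\leq\cont$.

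Finally, having fixed this $H$, I would apply the protodiscrete case of Proposition~\ref{Pro:12}: it produces a separable prodiscrete abelian group $P$ together with a topological isomorphism of $H$ onto a subgroup of $P$. Because $H$ is complete and the embedding is a topological isomorphism, the image is a complete, hence closed, subgroup of $P$; and it is non-separable because $H$ is. This yields a closed non-separable subgroup of a separable prodiscrete abelian group, establishing the corollary. The main obstacle is step (d)—pinning down a genuinely non-separable $\omega$-narrow protodiscrete abelian group of weight $\leq\cont$—and everything else is a direct appeal to Proposition~\ref{Pro:12} together with completeness forcing closedness.
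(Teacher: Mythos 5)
Your high-level skeleton matches the paper's: produce a non-separable, $\omega$-narrow, protodiscrete abelian group $H$ of weight $\leq\cont$ that is moreover \emph{complete}, embed it via Proposition~\ref{Pro:12} into a separable prodiscrete abelian group, and invoke completeness to get closedness. But your concrete candidates for $H$ all fail, and the failure is structural, not reparable by tweaking. Every subgroup of the compact group $\Z(2)^\kappa$ is precompact, and a precompact group has countable cellularity: its Ra\u{\i}kov completion is a compact group, which has countable cellularity, and cellularity is the same for a dense subgroup as for the whole group. So there is no \lq\lq analogous subgroup of $\Z(2)^\cont$ of uncountable cellularity\rq\rq\ --- Uspenskij's theorem \cite{Usp3} lives in $\Z^\cont$, a power of the \emph{non-compact} discrete group $\Z$, and does not transfer inside a compact power. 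The $\Sigma$-product fallback fails for the complementary reason: it is indeed non-separable, but it is a proper dense (hence non-complete) subgroup of $\Z(2)^\cont$, so its image under the Proposition~\ref{Pro:12} embedding is not closed; worse, the closure of any precompact candidate in the ambient separable group is its compact completion, which has weight $\leq\cont$ and is therefore separable by Theorem~\ref{Theorem 3}. In short, the property you label (d) must be strengthened to \lq\lq non-separable \emph{and complete}\rq\rq, which rules out every precompact example, and your proposal never produces a group with both properties.

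Two correct ways to fill the gap. The paper's route: take $H=A(L)$, the free abelian topological group over the one-point Lindel\"ofication $L$ of a discrete set of size $\aleph_1$; this is a Lindel\"of $P$-group, hence protodiscrete, complete, $\omega$-narrow, of weight $\aleph_1\leq\cont$, and non-separable because a non-discrete Hausdorff $P$-space is never separable --- then Proposition~\ref{Pro:12} finishes exactly as you intend. Alternatively, your own Uspenskij idea works if you stay in $\Z^\cont$ and skip Proposition~\ref{Pro:12} entirely: $\Z^\cont$ is itself a separable (Hewitt--Marczewski--Pondiczery) prodiscrete abelian group, and the closure $K$ in $\Z^\cont$ of Uspenskij's subgroup of uncountable cellularity is a closed subgroup with $c(K)>\omega$ (cellularity passes to closures of dense subgroups), hence non-separable; this is precisely the device of Remark~\ref{Re:Ark}(2) transplanted from $\R^\cont$ to $\Z^\cont$, and it is arguably shorter than the paper's proof.
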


\begin{proof}
Let $D$ be a discrete space of the cardinality $\aleph_1$. Denote by 
$L=D\cup\{x_0\}$ the space which contains $D$ as a dense open subspace 
and in which the sets of the form $L\setminus C$, where $C$ is an arbitrary 
countable subset of $D$, constitute a local base at $x_0$ in $L$. The space 
$L$ is known as a \textit{one-point Lindel\"ofication} of $D$. 

Denote by $H$ the free abelian topological group over $L$. Let us note that $L$ is a Lindel\"of \textit{$P$-space}, i.e. every $G_\delta$-set in $L$ is open. Since all finite powers of $L$ are Lindel\"of, $H$ is Lindel\"of as well. According to \cite[Proposition~7.4.7]{AT}, $H$ is also a $P$-space. Hence $H$ is protodiscrete by \cite[Lemma~4.4.1]{AT}. Notice that $H$ cannot be separable as a non-discrete Hausdorff $P$-space. Every Lindel\"of $P$-group is complete (see \cite[Proposition~2.3]{Tk04}), so the group $H$ is prodiscrete. 

Clearly, the Lindel\"of topological group $H$ is $\omega$-narrow. It is not difficult to verify that the topological character of $H$ (the minimum cardinality of a local base at the identity of $H$) equals $\aleph_1$\,\,---\,\,this follows, for example, from the proof of \cite[Lemma~5.1]{BGHT}. Hence $w(H)=\chi(H)=\aleph_1\leq\cont$ (see \cite[Lemma~5.1.5]{AT}).

We now apply Proposition~\ref{Pro:12} to conclude that $H$ is topologically isomorphic to a subgroup of a separable prodiscrete abelian group $G$. Since $H$ is complete, it is closed in $G$. Thus $G$ contains a closed non-separable subgroup.
\end{proof}

%%%%%%%%%%%%%%%%%%%%%%%%%%%%%%%%%%%%%%%%%%%%%%%%%%%%%%%%%%%%%%%%%%%%%%%
\section{Embeddings into topological groups vs embeddings into regular
spaces}\label{Sec:Emb}%%%%%%%%%%%%%%%%%%%%%%%%%%%%%%%%%%%%%%%%%%%%%%%%%
It is natural to compare the restrictions on a given topological group
$G$ imposed by the existence of either a topological embedding of $G$ into
a separable regular space or a topological isomorphism of $G$ onto a
subgroup of a separable topological group.

First we note that the first of the two classes of topological groups
is strictly wider than the second one. Indeed, consider an arbitrary 
discrete group $G$ satisfying $\omega<|G|\leq\cont$. Then $G$ embeds as
a \textit{closed subspace} into the separable space $\mathbb{N}^\cont$ 
\cite{Eng}, where $\mathbb{N}$ is the set of non-negative integers endowed 
with the discrete topology. However, $G$ does not admit a topological 
isomorphism onto a subgroup of a separable topological group. Indeed, 
every subgroup of a separable topological group is $\omega$-narrow by 
Proposition~\ref{Pro:12}. Since the discrete group $G$ is uncountable, 
it fails to be $\omega$-narrow.

The above observation makes it natural to restrict our attention to $\omega$-narrow topological groups when considering embeddings into separable topological groups. It turns out that in the class of $\omega$-narrow topological groups, the difference between the two types of embeddings disappears, even if we require an embedding to be closed. 

The next result complements Proposition~\ref{Pro:12}. The advantage of Theorem~\ref{Th:Emb} compared to Theorem~\ref{Th:nA} is that in the former one, we manage to identify a wide class of topological groups with closed subgroups of separable path-connected, locally path-connected topological groups. It is not surprising therefore that the Hartman--Mycielski construction \cite{HM} comes into play. 

\begin{theorem}\label{Th:Emb}
The following are equivalent for an arbitrary $\omega$-narrow topological
group $G$:
\begin{enumerate}
\item[{\rm (a)}] $G$ is homeomorphic to a subspace of a separable regular
                 space;
\item[{\rm (b)}] $G$ is topologically isomorphic to a subgroup of a separable
                 topological group;
\item[{\rm (c)}] $G$ is topologically isomorphic to a closed subgroup of a 
                 separable path-connected, locally path-connected topological group.                 
\end{enumerate}
\end{theorem}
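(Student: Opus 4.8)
The plan is to prove the cycle of implications $(c)\Rightarrow(b)\Rightarrow(a)\Rightarrow(c)$, where the first two arrows are essentially immediate and the genuine content lies in $(a)\Rightarrow(c)$. For $(c)\Rightarrow(b)$ I would simply observe that a closed subgroup is in particular a subgroup, and a path-connected locally path-connected separable group is a separable topological group, so nothing is required. For $(b)\Rightarrow(a)$ I would recall that any separable topological group is Tychonoff, hence regular, and a subgroup is a subspace; thus the identity embedding exhibits $G$ as a subspace of a separable regular space. Both of these rely only on standard facts and the definitions already introduced.

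The substance is the implication $(a)\Rightarrow(c)$. Suppose $G$ is an $\omega$-narrow topological group that is homeomorphic to a subspace of a separable regular space $X$. By Theorem~\ref{Th:1} (the De~Groot inequality) we immediately get $w(X)\le\cont$, and since weight is inherited by subspaces, $w(G)\le\cont$. Together with the standing assumption that $G$ is $\omega$-narrow, this places $G$ exactly in the situation covered by Proposition~\ref{Pro:12}: an $\omega$-narrow group of weight at most $\cont$ is topologically isomorphic to a subgroup of a separable topological group, and in fact (by Guran's Theorem~\ref{Th:Gur}) to a subgroup of a product $\Pi=\prod_{i\in I}G_i$ of second-countable groups with $|I|\le\cont$, which is separable by the Hewitt--Marczewski--Pondiczery theorem (Theorem~\ref{HMP}). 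So the first task is routine: reduce $(a)$ to the existence of a topological isomorphic embedding of $G$ into a separable group.

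The harder and more interesting part is upgrading an arbitrary (possibly non-closed) embedding into a separable group to a \emph{closed} embedding into a separable group that is moreover path-connected and locally path-connected. This is exactly where I expect the Hartman--Mycielski construction $\mathbf{HM}$ to enter, as the introductory paragraph before the theorem signals. The idea is that for a topological group $G$ one forms the group of step functions (or measurable functions) from $[0,1)$ into $G$, suitably topologized; this functor sends $G$ into a path-connected, locally path-connected group $\mathbf{HM}(G)$ into which $G$ embeds as a \emph{closed} subgroup, and it preserves the relevant cardinal invariants, in particular separability (or at worst weight $\le\cont$ together with $\omega$-narrowness, from which separability is recovered via Proposition~\ref{Pro:12} applied once more). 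Concretely I would first embed $G$ into a separable group $S$ via $(b)$, then apply $\mathbf{HM}$ to $S$ to obtain a separable path-connected, locally path-connected group $\mathbf{HM}(S)$; the natural embedding of $G$ into $\mathbf{HM}(S)$ as constant (or appropriately chosen) functions should be a closed topological-group embedding. The main obstacle is verifying that $\mathbf{HM}(S)$ is again separable (here one needs that $\mathbf{HM}$ does not blow up density beyond $\cont$, likely by checking $w(\mathbf{HM}(S))\le\cont$ and $\omega$-narrowness, then invoking Proposition~\ref{Pro:12}) and that the image of $G$ is \emph{closed} rather than merely embedded; closedness typically follows from completeness-type or level-set arguments intrinsic to the Hartman--Mycielski functor, and checking it carefully is where the real work of the proof will be.
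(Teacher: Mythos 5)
Your reduction of (a) to (b) is exactly the paper's: Theorem~\ref{Th:1} gives $w(G)\leq\cont$, and Proposition~\ref{Pro:12} embeds $G$ into a separable group $H$. You also correctly name the Hartman--Mycielski group as the vehicle for (c), and your worry about separability of $H^\bullet$ is misplaced but harmless: the step functions with rational partition points and values in a countable dense subgroup of $H$ form a countable dense subgroup of $H^\bullet$, so separability is routine. The genuine gap is in your closedness step. You propose to embed $G$ into $H^\bullet$ as constant functions and assert this embedding ``should be closed.'' It is not, in general: the canonical copy $i(H)$ of $H$ is closed in $H^\bullet$ \cite[Theorem~3.8.3]{AT}, and $i$ is a topological isomorphism onto $i(H)$, so $i(G)$ is closed in $H^\bullet$ \emph{if and only if} $G$ is closed in $H$ --- and Proposition~\ref{Pro:12} gives no control whatsoever over this ($G$ may well be a dense proper subgroup of $H$). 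Since the whole point of upgrading (b) to (c) is to manufacture closedness that the original embedding lacks, your plan as stated fails on exactly the cases the theorem is about, and no ``completeness-type'' property of the functor can rescue it, because the obstruction sits inside the closed copy $i(H)$ itself.

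The paper's mechanism, which your sketch is missing, is to \emph{not} take all of $H^\bullet$ as the ambient group. Let $E$ be the subgroup of step functions $f\in H^\bullet$ that are eventually trivial, i.e.\ $f(x)=e$ for all $x\geq b$ for some $b<1$, and let $G_0=\hull{i(G)\cup E}$. Then any $f\in G_0\cap i(H)$ is a product of elements of $i(G)$ and of $E$; evaluating at a point $b$ close enough to $1$ kills all the $E$-factors and shows the constant value of $f$ lies in $G$, whence $G_0\cap i(H)=i(G)$, and closedness of $i(G)$ in $G_0$ follows from closedness of $i(H)$ in $H^\bullet$. Separability of $G_0$ comes for free since $E$ contains the countable dense subgroup just mentioned. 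The price of shrinking to $G_0$ --- which your approach, working with the full $H^\bullet$, would have avoided --- is that path-connectedness and local path-connectedness must be re-proved by hand, since $G_0$ is a proper dense subgroup of $H^\bullet$ and dense subgroups do not inherit these properties; the paper does this with explicit paths that stay inside $G_0$ and, for local path-connectedness, inside the basic neighborhoods $O(U,\varepsilon)$, using a reparametrized switching construction that keeps every intermediate function's values in $\{f(x),e\}$. This last verification, together with the $G_0\cap i(H)=i(G)$ trick, is the real content of $(a)\Rightarrow(c)$, and neither appears in your proposal.
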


\begin{proof}
Since Hausdorff topological groups are regular, it is clear that (c) implies (b) and (b) implies (a). Hence it suffices to show that (a) implies (c). 

Assume that an $\omega$-narrow topological group $G$ is homeomorphic to a subspace of separable regular space $X$. Then $w(X)\leq\cont$ by Theorem~\ref{Th:1} and hence $w(G)\leq w(X)\leq\cont$. Applying Proposition~\ref{Pro:12}, we find a separable topological group $H$ containing $G$ as a topological subgroup. Clearly $G$ can fail to be closed in $H$, so our next step is to construct another separable topological group containing $G$ as a closed subgroup. To this end we will use the path-connected, locally path-connected group $H^\bullet$ corresponding to $H$ and consisting of \textit{step functions} from the semi-open interval $J=[0,1)$ to the group $H$ (for a description of $H^\bullet$, see Hartman and Mycielski \cite{HM} or \cite[Construction~3.8.1]{AT}). 

The group $H$ is canonically isomorphic to a closed subgroup of $H^\bullet$, the corresponding monomorphism $i\colon H\to H^\bullet$ assigns to each element $h\in H$ the constant function $i(h)=h^\bullet\in H^\bullet$ defined by $h^\bullet(x)=h$ for all $x\in J$. 

Let $e$ be the identity of $H$. Denote by $E$ the set of all step functions $f$ from $J$ to $H$ satisfying the following condition:
\begin{enumerate}
\item[(i)] there exists $b\in [0,1)$ such that $f(x)=e$ for each $x$ with $b\leq x<1$.
\end{enumerate}
It is clear that $E$ is a subgroup of $H^\bullet$. Let $D$ be a countable dense subgroup of $H$. Denote by $E'$ the subgroup of $E$ consisting of all $f\in E$ satisfying the following condition:
\begin{enumerate}
\item[(ii)] there exist rational numbers $0=b_0<b_1<\cdots< b_{m-1}<b_m=1$ such that $f$ is constant on each subinterval $[b_k,b_{k+1})$ and $f(b_k)=g_k\in D$ for $k=0,1,\ldots,m-1$. 
\end{enumerate}
Notice that $E'$ is countable. The argument given in the proof of \cite[Theorem~3.8.8,~item~e)]{AT} shows that $E'$ is dense in $H^\bullet$.

Denote by $G_0$ the subgroup of $H^\bullet$ generated by the set $i(G)\cup E$. Since $i\colon H\to H^\bullet$ is a topological monomorphism, the group $G$ is topologically isomorphic to the subgroup $i(G)$ of $H^\bullet$. It also follows from $E'\subset E\subset G_0$ that the group $G_0$ is separable. Let us verify that $i(G)$ is closed in $G_0$. 

First we note that $i(H)$ is closed in $H^\bullet$ according to \cite[Theorem~3.8.3]{AT}. Hence the required conclusion about $i(G)$ will follow if we show that $G_0\cap i(H)=i(G)$. Assume that $f\in G_0\cap i(H)$. Then $f$ is a constant function on $J$ with a single value $h_0\in H$. We have to show that $h_0\in G$, i.e. $f\in i(G)$. As $f\in G_0$, we can write $f$ in the form 
$$
f=i(g_1)^{m_1}t_1^{n_1}\cdots i(g_k)^{m_k}t_k^{n_k}i(g_{k+1})^{m_{k+1}},
$$ 
where $g_1,\ldots,g_k,g_{k+1}\in G$, $t_1,\ldots,t_k\in E$, and $m_i,n_i\in\mathbb{Z}$. Item (i) of our definition of the group $E$ implies that there exists $b<1$ such that $t_i(b)=e$ for each $i=1,\ldots,k$.
Hence $h_0=f(b)=g_1^{m_1}\cdots g_k^{m_k} g_{k+1}^{m_{k+1}}\in G$. Since $f$ is a constant function, we see that $f\in i(G)$. This implies the inclusion $G_0\cap i(H)\subset i(G)$. The inverse inclusion is evident. Therefore, $G\cong i(G)$ is closed in $G_0$. 

Now we have to check that the group $G_0$ is path-connected and locally path-connected. It is worth mentioning that $G_0$ is a proper dense subgroup of $H^\bullet$, but not \textit{every} dense subgroup of $H^\bullet$ inherits these properties from $H^\bullet$. We start with the path-connectedness of $G_0$.

Since $G_0$ is generated by the set $i(G)\cup E$, it suffices to verify that for every element $f\in i(G)\cup E$, there exist a path in $G_0$ connecting the identity $e^\bullet$ of $H^\bullet$ with $f$. Indeed, every element $f\in G_0$ is a product of finitely many elements $f_1,\ldots,f_n$ of $i(G)\cup E$ and, multiplying the paths connecting $f_1,\ldots,f_n$ with $e^\bullet$, we obtain a path in $G_0$ connecting $e^\bullet$ and $f$. So take an arbitrary element $f\in i(G)\cup E$.\smallskip

Case~1. $f\in i(G)$. Then $f=g^\bullet$ for some $g\in G$. For every $r\in [0,1]$, let $f_r$ be a step function from $J$ to $H$ defined by $f_r(x)=g$ if $x<r$ and $f_r(x)=e$ if $r\leq x<1$. It is clear that $f_r\in H^\bullet$ for each $r\in [0,1]$ and that the mapping $\varphi\colon [0,1]\to H^\bullet$, $\varphi(r)=f_r$, is continuous. Since $\varphi(0)=e^\bullet$, $\varphi(1)=f\in i(G)$, and $f_r\in E\subset G_0$ for each $r\in [0,1)$, this proves that $\varphi$ is a path in $G_0$ connecting $e^\bullet$ and $f$. 
\smallskip

Case~2. $f\in E$. Choose a partition $0=b_0<b_1<\cdots<b_m=1$ of $J$ such that $f$ is constant on each interval $[b_i,b_{i+1})$, where $0\leq i<m$, and $f(b_{m-1})=e$. Let us define a path $\varphi\colon [0,1]\to G_0$ connecting $e^\bullet$ with $f$ as follows. First we put $l_k=b_{k+1}-b_k$ for $k=0,\ldots,m-1$. For every $r\in [0,1]$ and every $x\in J$, let 
$$
f_r(x)=\begin{cases} f(x),&\text{if $b_k\leq x<b_k+r\cdot l_k$ for some $k$ with $0\leq k<m$};\\
\,\,\,\,e,&\text{if $b_k+r\cdot l_k\leq x<b_{k+1}$ for some $k$ with $0\leq k<m$.}
\end{cases}
$$
It is easy to verify that $f_0=e^\bullet$, $f_1=f$, and $f_r(x)=e$ if $b_{m-1}\leq x<1$. Hence $f_r\in E\subset G_0$ for each $r\in [0,1]$. Again, the mapping $\varphi\colon [0,1]\to H^\bullet$ defined by $\varphi(r)=f_r$ for each $r\in [0,1]$ is continuous, so $\varphi$ is a path in $G_0$ connecting $e^\bullet$ and $f$.

Summing up, the group $G_0$ is path-connected.

Finally, we check that $G_0$ is locally path-connected. Every neighborhood of $e^\bullet$ in $H^\bullet$ contains an open neighborhood of the form
$$
O(U,\varepsilon)=\{f\in H^\bullet: \mu(\{x\in J: f(x)\notin U\})<\varepsilon\},
$$
where $U$ is an open neighborhood of the identity $e$ in $H$, $\varepsilon>0$, and $\mu$ is the Lebesgue measure on $J$. Therefore, by the homogeneity of $G_0$, it suffices to verify that the intersections $G_0\cap O(U,\varepsilon)$ are path-connected.

Take an arbitrary element $f\in G_0\cap O(U,\varepsilon)$, where $U$ is an open neighborhood of $e$ in $H$ and $\varepsilon>0$. Then $f=i(g_1)^{m_1}t_1^{n_1}\cdots i(g_k)^{m_k}t_k^{n_k}i(g_{k+1}) ^{m_{k+1}}$, where $g_1,\ldots,g_k,g_{k+1}\in G$, $t_1,\ldots,t_k\in E$, and $m_i,n_i\in\mathbb{Z}$. 
Our aim is to define a path $\Phi\colon [0,1]\to G_0\cap O(U,\varepsilon)$ connecting $e^\bullet$ with $f$. We cannot apply directly the formula from the above Case~1 since otherwise we lose control over the measure of the set $\{x\in J: f_r(x)\notin U\}$ for some $r\in (0,1)$, where $f_r$ is assumed to be $\Phi(r)$. Instead, we adjust the speed of changes of the elements $i(g_1),\ldots, i(g_k),i(g_{k+1})$ and $t_1,\ldots,t_k$ on the appropriately chosen subintervals of $J$.  

First we choose a partition $0=b_0<b_1<\cdots<b_{m-1}<b_m=1$ of $J$ such that $t_i$ is constant on $[b_j,b_{j+1})$ for all integers $i\leq k$ and $j<m$. Let also $l_j=b_{j+1}-b_j$, where $j=0,\ldots,m-1$. For every $i=1,\ldots,k,k+1$ we define a path $\varphi_i\colon [0,1]\to H^\bullet$ by
$$
\varphi_i(r,x)=\begin{cases} g_i,&\text{if $b_j\leq x<b_j+r\cdot l_k$ for some $j$ with $0\leq j<m$};\\
\,e,&\text{if $b_j+r\cdot l_j\leq x<b_{j+1}$ for some $j$ with $0\leq j<m$.}
\end{cases}
$$
Then $\varphi_i(0,x)=e$, $\varphi_i(1,x)=g_i$ for each $x\in J$ and $\varphi_i(r,\cdot)\in G_0$ for each $r\in [0,1]$. The path $\varphi_i$ is continuous and connects $e^\bullet$ with $g_i^\bullet$ in $G_0$.

Similarly, we define a path $\psi_i\colon [0,1]\to H^\bullet$ for each $i=1,\ldots,k$ by
$$
\psi_i(r,x)=\begin{cases} t_i(x),&\text{if $b_j\leq x<b_j+r\cdot l_k$ for some $j$ with $0\leq j<m$};\\
\,\,\,\,e,&\text{if $b_j+r\cdot l_j\leq x<b_{j+1}$ for some $j$ with $0\leq j<m$.}
\end{cases}
$$
It is clear that $\psi_i(0,x)=e$, $\psi_i(1,x)=t_i(x)$ for each $x\in J$ and $\psi_i(r,\cdot)\in G_0$ for each $r\in [0,1]$. The path $\psi_i$ is continuous and connects $e^\bullet$ with $t_i$ in $G_0$.

Finally we define a path $\Phi$ in $G_0$ connecting $e^\bullet$ with $f$ by letting 
$$
\Phi(r,x)=\varphi_1(r,x)^{m_1}\cdot\psi_1(r,x)^{n_1}\cdots\varphi_k(r,x)^{m_k}\cdot\psi_k(r,x)^{n_k}\cdot\varphi_{k+1}(r,x)^{n_{k+1}},
$$
where $r\in [0,1]$ and $x\in J$. The path $\Phi$ is continuous being a product of continuous paths $\varphi_i$ and $\psi_i$. The following Claim describes a basic property of the path $\Phi$:\smallskip

\noindent \textbf{Claim.} \textit{For all $r\in [0,1]$ and $x\in J$, either $\Phi(r,x)=f(x)$ or $\Phi(r,x)=e$.} 
\smallskip

Indeed, let $r\in [0,1]$ and $x\in J$ be arbitrary. Choose an integer $j<m$ such that $b_j\leq x<b_{j+1}$. If $b_j\leq x<b_j+rl_j$, then $\varphi_i(r,x)=g_i$ and $\psi_i(r,x)=t_i(x)$ for all $i$, whence it follows that $\Phi(r,x)=f(x)$. If $b_j+r\cdot l_j\leq x<b_{j+1}$, then $\varphi_i(r,x)=e$ and $\psi_i(r,x)=e$
for all $i$, so $\Phi(r,x)=e$. This proves our Claim. 

Applying Claim we see that 
$$
\{x\in J: \Phi(r,x)\notin U\}\subset \{x\in J: f(x)\notin U\}, 
$$
for every $r\in [0,1]$. Hence $\mu(\{x\in J: \Phi(r,x)\notin U\})<\epsilon$ for each $r\in [0,1]$. In other words, the path $\Phi$ lies in $O(U,\varepsilon)$, so the set $O(U,\varepsilon)$ is path-connected.
Since the sets of the form $G_0\cap O(U,\varepsilon)$ constitute a base for $G_0$ at the identity, this completes the proof of the theorem.  
\end{proof}

\end{document}